\DeclareFontFamily{OT2}{cmr}{\hyphenchar\font45 }
\DeclareFontShape{OT2}{cmr}{m}{n}{%
   <5><6><7><8><9>gen*wncyr%
   <10><10.95><12><14.4><17.28><20.74><24.88>wncyr10}{}
\DeclareFontShape{OT2}{cmr}{b}{n}{%
   <5><6><7><8><9>gen*wncyb%
   <10><10.95><12><14.4><17.28><20.74><24.88>wncyb10}{}
\DeclareMathAlphabet{\mathcyr}{OT2}{cmr}{m}{n}
\DeclareMathAlphabet{\mathcyb}{OT2}{cmr}{b}{n}
\SetMathAlphabet{\mathcyr}{bold}{OT2}{cmr}{b}{n}
\theoremstyle{plain}
\newtheorem{thm}{Theorem}[section]
\newtheorem{lem}[thm]{Lemma}
\newtheorem{lem-defn}[thm]{Lemma-Definition}
\newtheorem{prop}[thm]{Proposition}
\newtheorem{cor}[thm]{Corollary}
\newtheorem{conj}[thm]{Conjecture}
\theoremstyle{definition}
\newtheorem{defn}[thm]{Definition}
\theoremstyle{remark}
\newtheorem{rem}[thm]{Remark}
 \newcommand{\nc}{\newcommand}
\renewcommand\Re{{\rm Re}}
\nc{\exref}[1]{\chapref\ref{#1}}
\nc{\per}[1]{\underset{#1}{\boldsymbol \pi}\,}
\nc{\QSym}{{\mathsf{QSym}}}
\nc{\tla}{\overset{\leftarrow}}
 \nc{\MT}{{\rm MT}}
 \nc{\XX}{{X}}
 \nc{\gF}{{\varPhi}}
 \nc{\gL}{{\Lambda}}
 \nc{\ot}{\otimes}
 \nc{\dual}{\ast}
 \nc{\wht}{\widehat}
 \nc{\bwg}{{\bigwedge}}
 \nc{\wg}{{\wedge}}
 \nc{\mmu}{{\boldsymbol{\mu}}}
 \nc{\mal}{{{\scriptstyle \maltese}}}
 \nc{\fA}{{\mathfrak A}}
 \nc{\hfA}{{\widehat{\mathfrak A}}}
 \nc{\HH}{{\mathfrak H}}
 \nc{\ra}{\rightarrow}
 \nc{\ors}{{\bfs}}
 \nc{\orr}{{\bfr}}
 \nc{\os}{{\overset}}
 \nc{\G}{{\mathbb G}}
 \nc{\F}{{\mathbb F}}
 \nc{\Z}{{\mathbb Z}}
 \nc{\R}{{\mathbb R}}
 \nc{\N}{{\mathbb N}}
 \nc{\ZN}{{{\mathbb N}_0}}
 \nc{\Q}{{\mathbb Q}}
 \nc{\CC}{{\mathbb C}}
 \nc{\V}{{\mathbb V}}
 \nc{\CP}{{\mathbb{CP}}}
 \nc{\Cnn}{{\mathbb C}_{\ge 0}}
 \nc{\Cp}{{\mathbb C}_{>0}}
 \nc{\MHSS}{MH${}^\star$S\xspace}
 \nc{\MHSSs}{MH${}^\star$Ss\xspace}
 \nc{\MZSV}{MZ${}^\star$V\xspace}
 \nc{\MZSVs}{MZ${}^\star$Vs\xspace}
 \nc{\FMZSV}{FMZ${}^\star$V\xspace}
 \nc{\FMZSVs}{FMZ${}^\star$Vs\xspace}
 \nc{\FESS}{FE${}^\star$S\xspace}
 \nc{\FESSs}{FE${}^\star$Ss\xspace}
 \nc{\DSh}{{\mathsf{DSh}}}
 \nc{\ShC}{{\mathsf{ShC}}}
 \nc{\MZV}{{\mathsf{MZV}}}
 \nc{\FMZV}{{\mathsf{FMZV}}}
 \nc{\FMZsV}{{\mathsf{FMZ}^\star\mathsf{V}}}
 \nc{\MPV}{{\mathsf{MPV}}}
 \nc{\ES}{{\mathsf{ES}}}
 \nc{\FES}{{\mathsf{FES}}}
\nc{\qMZV}{\mathsf{qMZV}}
\nc{\grqMZV}{\mathsf{grqMZV}}
 \nc{\wtcalM}{{\widetilde\calM}}
 \nc{\suf}{{\ast\,}}
 \nc{\sufq}{{\ast_q\,}}
 \nc{\gam}{{\gamma}}
 \nc{\gG}{{\Gamma}}
 \nc{\om}{{\omega}}
 \nc{\vep}{{\varepsilon}}
 \nc{\ga}{{\alpha}}
 \nc{\gl}{{\lambda}}
 \nc{\gb}{{\beta}}
 \nc{\gd}{{\delta}}
 \nc{\orgd}{{\vec \gd\,}}
 \nc{\gs}{{\sigma}}
 \nc{\gth}{{\theta}}
 \nc{\gS}{{\Sigma}}
 \nc{\gk}{{\kappa}}
  \nc{\gz}{{\zeta}}
 \nc{\tgz}{{\tilde{\zeta}}}
 \nc{\gO}{{\Omega}}
 \nc{\sif}{{\mathcal S}}
 \nc{\gt}{{\tau}}
 \nc{\Lra}{\Longrightarrow}
 \nc{\lra}{\longrightarrow}
 \nc{\lmaps}{\longmapsto}
 \nc{\fS}{{\mathsf S}}
 \nc{\DD}{{\mathfrak D}}
 \nc{\Llra}{\Longleftrightarrow}
 \nc{\ol}{\overline}
 \nc{\ola}{\overleftarrow}
 \nc{\lms}{\longmapsto}
 \nc{\zq}{{\zeta_q}}
 \nc{\us}{\underset}
 \nc{\tn}{{\tilde{n}}}
 \nc{\gD}{{\Delta}}
 \nc{\bi}{{\bf i}}
 \nc{\bfone}{{\bf 1}}
 \nc{\bfzero}{{\bf 0}}
 \nc{\bfa}{{\bf a}}
 \nc{\bfb}{{\bf b}}
 \nc{\bfc}{{\bf c}}
 \nc{\bfd}{{\bf d}}
 \nc{\bfe}{{\bf e}}
 \nc{\bff}{{\bf f}}
 \nc{\bfg}{{\bf g}}
 \nc{\bfi}{{\bf i}}
 \nc{\bfj}{{\bf j}}
\nc{\obfi}{{\overrightarrow{\boldsymbol \imath}}}
\nc{\obfj}{{\overrightarrow{\boldsymbol \jmath}}}
\nc{\obfk}{{\overrightarrow{\bf k}}}
\nc{\veps}{{\varepsilon}}
 \nc{\bfn}{{\bf n}}
 \nc{\bfl}{{\bf l}}
 \nc{\bfk}{{\bf k}}
 \nc{\bfm}{{\bf m}}
 \nc{\bfo}{{\bf o}}
 \nc{\bfp}{{\bf p}}
 \nc{\bfq}{{\bf q}}
 \nc{\bfr}{{\bf r}}
 \nc{\bfs}{{\bf s}}
 \nc{\bft}{{\bf t}}
 \nc{\bfu}{{\bf u}}
 \nc{\bfv}{{\bf v}}
 \nc{\bfw}{{\bf w}}
 \nc{\bfx}{{\bf x}}
 \nc{\bfy}{{\bf y}}
 \nc{\bfz}{{\bf z}}
 \nc{\bfA}{{\bf A}}
 \nc{\bfB}{{\bf B}}
 \nc{\bfC}{{\bf C}}
 \nc{\bfD}{{\bf D}}
 \nc{\bfE}{{\bf E}}
 \nc{\bfF}{{\bf F}}
 \nc{\bfG}{{\bf G}}
 \nc{\bfH}{{\bf H}}
 \nc{\bfI}{{\bf I}}
 \nc{\bfJ}{{\bf J}}
 \nc{\bfK}{{\bf K}}
 \nc{\bfL}{{\bf L}}
 \nc{\bfM}{{\bf M}}
 \nc{\bfN}{{\bf N}}
 \nc{\bfO}{{\bf O}}
 \nc{\bfP}{{\bf P}}
 \nc{\bfQ}{{\bf Q}}
 \nc{\bfR}{{\bf R}}
 \nc{\bfS}{{\bf S}}
 \nc{\bfT}{{\bf T}}
 \nc{\bfU}{{\bf U}}
 \nc{\bfV}{{\bf V}}
 \nc{\bfW}{{\bf W}}
 \nc{\bfX}{{\bf X}}
 \nc{\bfY}{{\bf Y}}
 \nc{\bfZ}{{\bf Z}}
 \nc{\bbA}{{\mathbb A}}
 \nc{\bbB}{{\mathbb B}}
 \nc{\bbC}{{\mathbb C}}
 \nc{\bbD}{{\mathbb D}}
 \nc{\bbE}{{\mathbb E}}
 \nc{\bbF}{{\mathbb F}}
 \nc{\bbG}{{\mathbb G}}
 \nc{\bbH}{{\mathbb H}}
 \nc{\bbI}{{\mathbb I}}
 \nc{\bbJ}{{\mathbb J}}
 \nc{\bbK}{{\mathbb K}}
 \nc{\bbL}{{\mathbb L}}
 \nc{\bbM}{{\mathbb M}}
 \nc{\bbN}{{\mathbb N}}
 \nc{\bbO}{{\mathbb O}}
 \nc{\bbP}{{\mathbb P}}
 \nc{\bbQ}{{\mathbb Q}}
 \nc{\bbR}{{\mathbb R}}
 \nc{\bbS}{{\mathbb S}}
 \nc{\bbT}{{\mathbb T}}
 \nc{\bbU}{{\mathbb U}}
 \nc{\bbV}{{\mathbb V}}
 \nc{\bbW}{{\mathbb W}}
 \nc{\bbX}{{\mathbb X}}
 \nc{\bbY}{{\mathbb Y}}
 \nc{\bbZ}{{\mathbb Z}}
 \nc{\bba}{{\mathbb a}}
 \nc{\bbb}{{\mathbb b}}
 \nc{\bbc}{{\mathbb c}}
 \nc{\bbd}{{\mathbb d}}
 \nc{\bbe}{{\mathbb e}}
 \nc{\bbf}{{\mathbb f}}
 \nc{\bbg}{{\mathbb g}}
 \nc{\bbh}{{\mathbb h}}
 \nc{\bbi}{{\mathbb i}}
 \nc{\bbk}{{\mathbb k}}
 \nc{\bbl}{{\mathbb l}}
 \nc{\bbm}{{\mathbb m}}
 \nc{\bbn}{{\mathbb n}}
 \nc{\bbo}{{\mathbb o}}
 \nc{\bbp}{{\mathbb p}}
 \nc{\bbq}{{\mathbb q}}
 \nc{\bbr}{{\mathbb r}}
 \nc{\bbs}{{\mathbb s}}
 \nc{\bbt}{{\mathbb t}}
 \nc{\bbu}{{\mathbb u}}
 \nc{\bbv}{{\mathbb v}}
 \nc{\bbw}{{\mathbb w}}
 \nc{\bbx}{{\mathbb x}}
 \nc{\bby}{{\mathbb y}}
 \nc{\bbz}{{\mathbb z}}
 \nc{\calA}{{\mathcal A}}
 \nc{\calB}{{\mathcal B}}
 \nc{\calC}{{\mathcal C}}
 \nc{\calD}{{\mathcal D}}
 \nc{\calE}{{\mathcal E}}
 \nc{\calF}{{\mathcal F}}
 \nc{\calG}{{\mathcal G}}
 \nc{\calH}{{\mathcal H}}
 \nc{\calI}{{\mathcal I}}
 \nc{\calJ}{{\mathcal J}}
 \nc{\calK}{{\mathcal K}}
 \nc{\calL}{{\mathcal L}}
 \nc{\calM}{{\mathcal M}}
 \nc{\calN}{{\mathcal N}}
 \nc{\calO}{{\mathcal O}}
 \nc{\calP}{{\mathcal P}}
 \nc{\calQ}{{\mathcal Q}}
 \nc{\calR}{{\mathcal R}}
 \nc{\calS}{{\mathcal S}}
 \nc{\calT}{{\mathcal T}}
 \nc{\calU}{{\mathcal U}}
 \nc{\calV}{{\mathcal V}}
 \nc{\calW}{{\mathcal W}}
 \nc{\calX}{{\mathcal X}}
 \nc{\calY}{{\mathcal Y}}
 \nc{\calZ}{{\mathcal Z}}
  \nc{\cala}{{\mathcal a}}
 \nc{\calb}{{\mathcal b}}
 \nc{\calc}{{\mathcal c}}
 \nc{\cald}{{\mathcal d}}
 \nc{\cale}{{\mathcal e}}
 \nc{\calf}{{\mathcal f}}
 \nc{\calg}{{\mathcal g}}
 \nc{\calh}{{\mathcal h}}
 \nc{\cali}{{\mathcal i}}
 \nc{\calj}{{\mathcal j}}
 \nc{\calk}{{\mathcal k}}
 \nc{\call}{{\mathcal l}}
 \nc{\calm}{{\mathcal m}}
 \nc{\caln}{{\mathcal n}}
 \nc{\calo}{{\mathcal o}}
 \nc{\calp}{{\mathsf p}}
 \nc{\calq}{{\mathcal q}}
 \nc{\calr}{{\mathcal r}}
 \nc{\cals}{{\mathcal s}}
 \nc{\calt}{{\mathcal t}}
 \nc{\calu}{{\mathcal u}}
 \nc{\calv}{{\mathcal v}}
 \nc{\calw}{{\mathcal w}}
 \nc{\calx}{{\mathcal x}}
 \nc{\caly}{{\mathcal y}}
 \nc{\calz}{{\mathcal z}}
 \nc{\frakA}{{\mathfrak A}}
 \nc{\frakB}{{\mathfrak B}}
 \nc{\frakC}{{\mathfrak C}}
 \nc{\frakD}{{\mathfrak D}}
 \nc{\frakE}{{\mathfrak E}}
 \nc{\frakF}{{\mathfrak F}}
 \nc{\frakG}{{\mathfrak G}}
 \nc{\frakH}{{\mathfrak H}}
 \nc{\frakI}{{\mathfrak I}}
 \nc{\frakJ}{{\mathfrak J}}
 \nc{\frakK}{{\mathfrak K}}
 \nc{\frakL}{{\mathfrak L}}
 \nc{\frakM}{{\mathfrak M}}
 \nc{\frakN}{{\mathfrak N}}
 \nc{\frakO}{{\mathfrak O}}
 \nc{\frakP}{{\mathfrak P}}
 \nc{\frakQ}{{\mathfrak Q}}
 \nc{\frakR}{{\mathfrak R}}
 \nc{\frakS}{{\mathfrak S}}
 \nc{\frakT}{{\mathfrak T}}
 \nc{\frakU}{{\mathfrak U}}
 \nc{\frakV}{{\mathfrak V}}
 \nc{\frakW}{{\mathfrak W}}
 \nc{\frakX}{{\mathfrak X}}
 \nc{\frakY}{{\mathfrak Y}}
 \nc{\frakZ}{{\mathfrak Z}}
 \nc{\fraka}{{\mathfrak a}}
 \nc{\frakb}{{\mathfrak b}}
 \nc{\frakc}{{\mathfrak c}}
 \nc{\frakd}{{\mathfrak d}}
 \nc{\frake}{{\mathfrak e}}
 \nc{\frakf}{{\mathfrak f}}
 \nc{\frakg}{{\mathfrak g}}
 \nc{\frakh}{{\mathfrak h}}
 \nc{\fraki}{{\mathfrak i}}
 \nc{\frakj}{{\mathfrak j}}
 \nc{\frakk}{{\mathfrak k}}
 \nc{\frakl}{{\mathfrak l}}
 \nc{\frakm}{{\mathfrak m}}
 \nc{\frakn}{{\mathfrak n}}
 \nc{\frako}{{\mathfrak o}}
 \nc{\frakp}{{\mathfrak p}}
 \nc{\frakq}{{\mathfrak q}}
 \nc{\frakr}{{\mathfrak r}}
 \nc{\fraks}{{\mathfrak s}}
 \nc{\frakt}{{\mathfrak t}}
 \nc{\fraku}{{\mathfrak u}}
 \nc{\frakv}{{\mathfrak v}}
 \nc{\frakw}{{\mathfrak w}}
 \nc{\frakx}{{\mathfrak x}}
 \nc{\fraky}{{\mathfrak y}}
 \nc{\frakz}{{\mathfrak z}}
 \nc{\so}{{\mathfrak so}}
 \nc{\slfour}{{\mathfrak sl}_4}
 \nc{\one}{{\bf 1}}
 \nc{\zero}{{\bf 0}}
 \nc{\Qab}{\Q\langle a,b\rangle}
\nc{\sa}{{\mathcyr{sh}}}
\nc{\sha}{\mathbin{\mathcyr{sh}}}
\nc{\zetas}{{\zeta^\star}}
\nc{\db}{{\mathbb D}}
 \nc{\invdots}{{.\text{\raisebox{0.2ex}{$\cdot$}} \text{\raisebox{0.9ex}{$\cdot$}} }}
 \nc{\Sy}{{\mathcal S}}
 \nc{\inv}{{\rm inv}}
 \nc{\Rac}{{\mathcal R}}
 \nc{\dd}{{\mathfrak d}}
 \nc{\gFF}{{\chi}}
 \nc{\gX}{{\varPsi}}
 \nc{\gXs}{\gX^\star}
 \nc{\cv}{{\rm cv}}
  \nc{\myone}{\bfone} 
 \nc{\QX}{{\Q\langle \bfX\rangle}}
 \nc{\QY}{{\Q\langle \bfY\rangle}}
 \nc{\CX}{{\CC\langle \bfX\rangle}}
 \nc{\CY}{{\CC\langle \bfY\rangle}}
 \nc{\QXX}{{\Q\langle\!\langle \bfX\rangle\!\rangle}}
 \nc{\QYY}{{\Q\langle\!\langle \bfY\rangle\!\rangle}}
 \nc{\CXX}{{\CC\langle\!\langle \bfX\rangle\!\rangle}}
 \nc{\CYY}{{\CC\langle\!\langle \bfY\rangle\!\rangle}}
  \nc{\BTF}{{F}}
  \nc{\gbb}{{\beta}}
  \nc{\bfzt}{{{\boldsymbol \zeta}}}
 \nc{\hone}{{\widehat{1}}}
 \nc{\genf}{\genfrac{[}{]}{0pt}{}}
 \nc{\oll}[1]{\underline{#1}}
 \nc{\hyf}{{\text{-}}}
 \nc{\bt}{{\bf 2}}
 \nc{\wbfp}{{\widetilde{\bfp}}}
 \nc{\wbfw}{{\widetilde{\bfw}}}
 \nc{\wdt}{{\widetilde{t}}}
 \nc{\wdl}{{\widetilde{\gl}}}
 \nc{\wdp}{{\widetilde{p}}}
 \nc{\wwbfw}{{\overset{\text{\raisebox{-2pt}{$\approx$}}}{\bfw}}}
 \nc{\wwbfp}{{\overset{\text{\raisebox{-2pt}{$\approx$}}}{\bfp}}}
 \nc{\wwdp}{{\overset{\text{\raisebox{-2pt}{$\approx$}}}{\!p}}}
 \nc{\wwdl}{{\overset{\text{\raisebox{-2pt}{$\approx$}}}{\!\gl}}}
 \nc{\tb}{{\tilde{b}}}
 \nc{\tB}{{\widetilde{B}}}
 \nc{\tX}{{\widetilde{X}}}
 \nc{\tY}{{\widetilde{Y}}}
 \nc{\tbfs}{{\tilde{\bfs}}}
 \nc{\tbft}{{\tilde{\bft}}}
 \nc{\tbfu}{{\tilde{\bfu}}}
 \nc{\ttbfs}{{\hat{\bfs}}}
 \nc{\ttbft}{{\hat{\bft}}}
 \nc{\ttbfu}{{\hat{\bfu}}}
 \nc{\rrho}{{\hat{\rho}}}
 \nc{\ggk}{{\hat{\gk}}}
 \nc{\ggs}{{\hat{\gs}}}
 \nc{\oI}{{\overline{I}}}
 \nc{\bI}{{\bar{I}}}
 \nc{\bJ}{{\bar{J}}}
 \nc{\bK}{{\bar{K}}}
 \nc{\bfgb}{{\boldsymbol \gb}}
 \nc{\bfgl}{{\boldsymbol \gl}}
 \nc{\wbfgl}{{\widetilde{\bfgl}}}
 \nc{\wwbfgl}{{\overset{\text{\raisebox{-2pt}{$\approx$}}}{\bfgl}}}
 \nc{\bfga}{{\boldsymbol \ga}}
 \nc{\bfrho}{{\boldsymbol \rho}}
 \nc{\bfchi}{{\boldsymbol \chi}}
 \nc{\qup}{{q\to 1}}
 \nc{\qbin}{\genfrac{[}{]}{0pt}{}}
 \nc{\wh}{\widehat}
 \nc{\I}{{{\rm I}}\xspace}
 \nc{\tI}{{{{{\rm {\tilde I}}}}\xspace}}
 \nc{\II}{{{\rm I\!I}}\xspace}
 \nc{\III}{{{\rm I\!I\!I}}\xspace}
 \nc{\IV}{{{\rm I\!V}}\xspace}
 \nc{\VV}{{{\rm V}}\xspace}
 \nc{\GG}{{{\rm G}}\xspace}
 \nc{\tIV}{{{{\widetilde{\rm I\!V}}}\xspace}}
 \renewcommand{\O}{{\rm O}}
 \nc{\idf}{{\rm \bfI}}
 \nc{\tfA}{{\tilde{\fA}}}
 \nc{\DS}{{\mathsf{DS}}}
 \nc{\DU}{{\mathsf{DU}}}
 \nc{\sfW}{{\mathsf{W}}}
\nc{\sfZ}{{\mathsf{Z}}}
 \nc{\dR}{{\rm dR}}
\def\myqshu{\joinrel{\!\scriptstyle\amalg\hskip -3.1pt\amalg}\,\hskip -8.5pt\hbox{-}\hskip 5pt}
\begin{document}
\title[Uniform Approach to DBSF and Dualities of Various {\lowercase {$q$}}-MZVs]
{Uniform Approach to Double Shuffle and Duality Relations of Various
{\lowercase {$q$}}-Analogs of Multiple Zeta Values via Rota-Baxter Algebras}

\date{}

\author{Jianqiang Zhao}

\begin{abstract}
The multiple zeta values (MZVs) have been studied extensively in recent years.
Currently there exist a few different types of $q$-analogs of the MZVs
($q$-MZVs) defined and studied by mathematicians and physicists.
In this paper, we give a uniform treatment of these $q$-MZVs
by considering their double shuffle relations (DBSFs) and duality relations.
The main idea is a modification and generalization of the one used by Castillo Medina et al.
who have considered the DBSFs of a special type of $q$-MZVs.
We generalize their method to a few other types of $q$-MZVs including the one defined
by the author in 2003. With different approach, Takeyama has already studied
this type by ``regularization'' and observed that there exist
$\Q$-linear relations which are not consequences of the DBSFs. He also discovered
a new family of relations which we call the duality relations in this paper.
This deficiency of DBSFs occurs among other types, too, so
we generalize the duality relations to all of these values and find that
there are still some missing relations.
This leads to the most general type of $q$-MZVs together with a new kind
of relations called $\bfP$-$\bfR$ relations which are used to lower the deficiencies further.
As an application, we will confirm a conjecture of Okounkov on the dimensions of certain $q$-MZV spaces,
either theoretically or numerically, for the weight up to 12.
Some relevant numerical data are provided at the end.
\end{abstract}

\maketitle

\section{Introduction}
The multiple zeta values are iterated generalizations of the Riemann zeta values
to the multiple variable setting. Euler \cite{Euler1775} first studied the double zeta values
systematically in the 18th century. Hoffman \cite{Hoffman1992} and Zagier \cite{Zagier1994}
independently considered the following more general form in the early 1990's. Let $\N$ be
the set of positive integers.
For any $d\in\N$ and $\bfs=(s_1,\dots,s_d)\in\N^d$ with $s_1\ge 2$ one defines
the \emph{multiple zeta values} (MZVs) as the $d$-fold sum
\begin{equation*}
\zeta(\bfs)=\sum_{k_1>\dots>k_d>0} \frac1{k_1^{s_1}\cdots k_d^{s_d}}.
\end{equation*}
A lot of important and sometimes surprising applications of MZVs have been found in
many areas in mathematics and theoretical physics in recent years, see \cite{Broadhurst1996,Brown2012,GoncharovMa2004,KurokawaLaOc2009,LeMu1995}. One of the
most powerful ideas is to consider the so-called double shuffle
relations (DBSFs). The stuffle relations are obtained directly by
using the above series definition when multiplying two MZVs. The other, the shuffle
relations, can be produced by multiplying their integral representations and using
Chen's theory of iterated integrals \cite{Chen1971}. The interested reader is
referred to the seminal paper \cite{IKZ2006} for more details.

Lagging behind the above development for about a decade,
a few $q$-analogs were proposed and studied
by different mathematicians and physicists. All of these $q$-analogs enjoy
the property that when $q\to 1$ one can recover the ordinary MZVs
defined in the above if no divergence occurs. In this paper, by modifying and generalizing
an idea in \cite{CEM2013} we give a uniform treatment
of these $q$-analogs by using some suitable Rota-Baxter algebras which reflect the
properties of the Jackson's integral representations of these $q$-analogs.

Recall that for any fixed complex number $q$ with $|q|<1$ one can define the $q$-analog of
positive integers by setting $[k]=[k]_q:=1+q+\dots+q^{k-1}=(1-q^k)/(1-q)$ for all $k\in\N$.
To summarize the various versions of $q$-analog of MZVs ($q$-MZVs for abbreviation),
we first define a general type of $q$-MZV of $2d$ variables $s_1,\dots, s_d,
t_1,\dots, t_d\in \Z$
\begin{equation}\label{equ:auxiliaryDefn}
\zeta_q^{\bft}[\bfs]
:=\sum_{k_1>\dots>k_d>0} \frac{q^{k_1t_1+\cdots+k_dt_d}}{[k_1]^{s_1}\cdots[k_d]^{s_d}}
=(1-q)^{|\bfs|}\sum_{ k_1>\dots>k_d>0} \frac{q^{k_1t_1+\cdots+k_dt_d}}{(1-q^{k_1})^{s_1}\cdots (1-q^{k_d})^{s_d}},
\end{equation}
where $|\bfs|=s_1+\dots+s_d$ is called the \emph{weight} and $d$ the \emph{depth}.
The variables of $\bft$ are called \emph{auxiliary variables}.
Also, it is often convenient to study its modified form by dropping the power of $1-q$:
\begin{equation*}
    \frakz_q^{\bft}[\bfs]:=\sum_{ k_1>\dots>k_d>0} \frac{q^{k_1t_1+\cdots+k_dt_d}}{(1-q^{k_1})^{s_1}\cdots (1-q^{k_d})^{s_d}},
\end{equation*}

In the following table, we list a few different versions of $q$-MZVs that
have been studied so far by different authors, except for one new type (type \IV in the table).
We only write down their
modified form although sometimes the original authors only considered $\zeta_q$.
\begin{figure}[!h]
{
\begin{center}
\begin{tabular}{  |c|c|c|c|c| } \hline
Type & Year & Authors & $q$-MZV & DBSF \\ \hline
\   &2001& Schlesinger \cite{Schlesinger2001} & $\frakz_q^{(0,\dots,0)}[s_1,\dots, s_d]$
                    & see \eqref{equ:Schlesinger} \\ \hline
\I  &2002& Kaneko, Kurokawa \& Wakayama \cite{KanekoKuWa2003} & $\frakz_q^{(s-1)}[s]$ (depth=1) & N/A \\ \hline
\I  &2003& Zhao \cite{Zhao2007c} & $\frakz_q^{(s_1-1,\dots, s_d-1)}[s_1,\dots, s_d]$
            & \cite{Takeyama2013}, $\star$ \\ \hline
\II &2003& Zudilin~\cite{Zudilin2003}& $\frakz_q^{(s_1,\dots, s_d)}[s_1,\dots, s_d]$ &$\star$ \\ \hline
\III&2012& Ohno, Okuda \& Zudilin \cite{OOZ2012} & $\frakz_q^{(1,0,\dots, 0)}[s_1,\dots, s_d]$ &
        \cite{CEM2013}, $\star$\\ \hline
\IV &2014& Zhao $\star$ & $\frakz_q^{(s_1-1,s_2,\dots, s_d)}[s_1,\dots, s_d]$ &  $\star$\\ \hline
BK  &2013&Bachmann \& K\"uhn \cite{BachmannKu2013}&$\frakz_q^{\rm BK}[s_1,\dots, s_d]$
        &\cite{Zudilin2014}  \\  \hline
O   &2014& Okounkov \cite{Okounkov2014} & $\frakz_q^\O[s_1,\dots, s_d], s_j\ge 2$ & $\star$ \\ \hline
G   &2003& Zhao \cite{Zhao2007c} & $\frakz_q^{(t_1,\dots, t_d)}[s_1,\dots, s_d]$ & $\star$ \\ \hline
\end{tabular}
\end{center}
}
\caption{A time line of different versions of $q$-MZVs. $\star$=this paper.}
\label{Table:qMZV}
\end{figure}
We notice that in 2004, Bradley \cite{Bradley2005} apparently defined
$\zeta_q^{(s_1-1,\dots, s_d-1)}[s_1,\dots, s_d]$
independently, and later, Okuda and Takeyama also studied some of the relations
among this type of $q$-MZVs in \cite{OkudaTa2007}.
Additionally, it is not hard to see that Schlesinger's version diverges when $|q|<1$ but
can converge if $|q|>1$. In fact, for $\bfs\in\Z^d$
\begin{equation}\label{equ:Schlesinger}
    \frakz_{q^{-1}}^{(0,\dots,0)}[s_1,\dots, s_d]
    =(-1)^{s_1+\dots+s_d} \frakz_q^{(s_1,\dots, s_d)}[s_1,\dots, s_d]
    =(-1)^{s_1+\dots+s_d} \frakz_q^\II[s_1,\dots, s_d].
\end{equation}
So it suffices to consider type \II in order to understand Schlesinger's $q$-MZVs.
The last column of Table~\ref{Table:qMZV} provides the references where
DBSFs are considered systematically (not only the stuffle).

In this paper, we will use suitable Rota-Baxter algebras to study the first four types of $q$-MZVs
listed in Table~\ref{Table:qMZV} in details. We also briefly consider the general type G and
Okounkov's type O $q$-MZVs. Note that the numerators inside the summands of
$\zeta_q^{\rm BK}$ and $\zeta_q^\O$
are not exact powers of $q$, but some polynomials of $q$ enjoying nice properties.
Further, for $\zeta_q^\O$ the polynomial numerator is at worst a sum of two $q$-powers so
our method can still work. See Corollary~\ref{cor:qIteratedZetaOk}.
It may be difficult to use the approach here to study the Bachmann and K\"uhn's type since
the numerators are much more complicated.

In the classical setting, the so-called regularized DBSFs play extremely
important roles in discovering and proving $\Q$-linear relations among MZVs.
The first serious attempt to discover the DBSFs among $q$-MZVs
was carried out by the author in \cite{Zhao2007c} by using Jackson's $q$-integrals.
However, the computation there was too complicated so only very few relations
were found successfully. The real breakthrough came with Takeyama's successful
application of Hoffman's algebras to study type \I $q$-MZVs in \cite{Takeyama2013}.
However, his approach to the shuffle relations relies on some auxiliary
multiple polylogarithm functions and consequently it is very hard to see why
these relations should hold.

The situation looks much better with the appearance of a recent paper \cite{CEM2013}
by Castillo Medina et al.\ who generalized Chen's iterated integrals to
iterated Jackson's $q$-integrals to
study type \III $q$-MZVs by using Rota-Baxter algebra techniques.
Motivated by this new idea, in this paper we will consider all the $q$-MZVs
of type \I, \II, \III and \IV by finding/using their correct realizations in terms of
iterated Jackson's $q$-integrals. Then by combining the Rota-Baxter algebra technique
and Hoffman's algebra of words we are able to study the DBSFs of all of these $q$-MZVs.

When one considers the $\Q$-linear relations among the ordinary MZVs,
the main difficulty lies in the insufficiency of DBSFs produced using
only admissible arguments. In the $q$-analog setting, the situation is
only partially similar and is sometimes much more complicated.

For type \I $q$-MZVs, our computation shows that the DBSFs CAN provide
all the $\Q$-linear relations. However, in order to study these relations,
as Takeyama noticed first, one has to
enlarge the set of type \I $q$-MZVs to something we call type $\tI$ $q$-MZVs which
are a kind of ``regularized'' $q$-MZVs in the sense that
one needs to consider some convergent versions of $q$-MZVs when $s_1=1$
by modifying the auxiliary variables of $\bft$. But for these type $\tI$ $q$-MZVs themselves,
DBSFs are insufficient to provide all the $\Q$-linear relations and a certain ``Resummation Identity"
defined by Takeyama is required. In this paper, we will adopt the term ``duality''
due to its similarity to the duality relations of the ordinary MZVs. Moreover,
for type $\tI$ $q$-MZVs of weight bounded by $w$ there are often still missing
relations even after we consider both DBSFs and duality relations within
same weight and depth range. These missing relations can be
recovered only after we increase the weight and depth. This phenomenon is not
unique to type $\tI$ $q$-MZVs. We have recorded this fact by using the ``deficiency''
numbers listed in the tables in the last section of this paper.

Similar to type \I, we find that type \IV $q$-MZVs also need to be ``regularized'' when $s_1=1$.
Again, we achieve this by introducing some convergent versions of the $q$-MZVs by modifying
the auxiliary variables in $\bft$.

It turns out that type \II $q$-MZVs behave most regularly and enjoy some properties closest
to those of the ordinary MZVs. For example, their duality relations (see Theorem~\ref{thm:resummation-dualityII}) have the cleanest form. Moreover, every other type
of $q$-MZVs considered in this paper can be converted to type \II. But still, there are
relations that cannot be proved by DBSFs and dualities, at least when one is confined
within the same weight and depth range. In fact, we find three independent $\Q$-linear
relations in weight 4 that can only be proved when we consider weight 5 DBSFS and dualities.

All type \III $q$-MZVs are convergent, even for negative arguments. For simplicity,
in this paper we only consider those nonnegative arguments $s_1,\dots,s_d$ with $s_1\ge 1$.
In this case, the DBSFs are still insufficient. In the last section, we will see that in
weight 3 there is already a missing
relation which can be recovered by the duality. Essentially because of the need to
apply the duality relations we have to modify the original Jackson integral representation given
in \cite{CEM2013}. See the remarks after Theorem~\ref{thm:qIteratedZetaIIICEM}.
In contrast to the other types of $q$-MZVs, we cannot suppress the deficiency for type \III even
if we consider more DBSFs and duality relations by increasing the weight and depth. This
might be caused by our restriction of only nonnegative arguments and
thus further investigations are called for.

On the other hand, we can improve the above situation by considering the
more general type G values. All the missing relations are thus proved up to
and including weight 4 and at same time both deficiencies
are decreased in weight 5 and 6. The key idea here is to convert all type G values
to type II values by using a new kind of relations called $\bfP$-$\bfR$ relations.

We point out that our method can be easily adapted to study $q$-MZVs of
the following general forms:
\begin{equation*}
\frakz_q^{(s_1-a_1,\dots, s_d-a_d)}[s_1,\dots, s_d], \qquad
\frakz_q^{(a_1,\dots, a_d)}[s_1,\dots, s_d],
\end{equation*}
where $a_1\ge a_2 \ge \cdots\ge a_d\ge 0$ are all fixed integers. Furthermore, when the weight is
not too large, our method can be programmed to compute all the relations among
$q$-MZVs of the general form $\frakz_q^{\bft}[\bfs]$ when $\bft$ is taken within
a certain range. This will be carried out in section~\ref{sec:TypeG}.

As an application, for small weight cases
it is possible to confirm Okounkov's conjecture \cite{Okounkov2014}
on the dimension of the $q$-MZVs $\frakz_q^\O[\bfs]$ using
Corollary~\ref{cor:qIteratedZetaOk}. We do this numerically up to weight 12 and give
rigorous proof up to weight 6 (both inclusive).

Throughout the paper we will use the modified form $\frakz_q$. All the results can be
translated into the standard form $\zeta_q$ by inserting the correct powers of $(1-q)^w$,
where $w$ is the corresponding weight, into the formulas.

\section{Convergence domain for $q$-MZVs}
We need the following result to find the convergence domain for different types of $q$-MZVs.
It is Proposition~2.2 of \cite{Zhao2007c} where the order of the indexes in the definition
of $\zeta_q^{(t_1,\dots, t_d)}[s_1,\dots, s_d]$
(denoted by $f_q(s_d,\dots, s_1;t_d,\dots, t_1)$ in loc.\ cit.) is opposite to this paper.

\begin{prop}\label{prop:fAuxFunction}
The function $\zeta_q^{(t_1,\dots, t_d)}[s_1,\dots, s_d]$
converges if $\Re(t_1+\dots+t_j)>0$ for all $j=1,\dots,d$. It can
be analytically continued to a meromorphic function over $\CC^{2d}$
via the series expansion
\begin{equation}\label{equ:merom}
\zeta_q^{(t_1,\dots, t_d)}[s_1,\dots, s_d]=(1-q)^{|\bfs|}
\sum_{r_1,\dots,r_d=0}^{\infty} \prod_{j=1}^d
\left[{s_j+r_j-1\choose r_j}
\frac{q^{(d+1-j)(r_j+t_j)}}{1-q^{r_1+t_1+\cdots+r_j+t_j}} \right].
\end{equation}
It has the following (simple)
poles: $t_1+\dots+t_j\in \Z_{\le 0}+\frac{2\pi i}{\log q}\Z$
for $j=1,\dots,d$.
\end{prop}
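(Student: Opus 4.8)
The plan is to prove the three assertions—convergence, the explicit expansion \eqref{equ:merom}, and the pole locus—by reducing everything to multiple geometric series. Throughout I write $T_j:=t_1+\dots+t_j$ and $R_j:=r_1+\dots+r_j$.

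For convergence I would first perform the change of summation variables $n_j:=k_j-k_{j+1}$ for $1\le j<d$ and $n_d:=k_d$, so that the constraint $k_1>\dots>k_d>0$ becomes $n_1,\dots,n_d\ge1$ and $k_j=n_j+\dots+n_d$. A reindexing then gives $k_1t_1+\dots+k_dt_d=\sum_{j=1}^d n_jT_j$, so the numerator factors as $\prod_j(q^{T_j})^{n_j}$. Since $|q|<1$, each denominator $(1-q^{k_j})^{s_j}$ has modulus bounded above and below by positive constants, uniformly in $k_j\ge1$, so up to a bounded factor the summand equals $\prod_j|q^{T_j}|^{n_j}$. Hence the multiple sum is dominated by the product of geometric series $\prod_j\sum_{n_j\ge1}|q^{T_j}|^{n_j}$, which converges precisely when $|q^{T_j}|<1$ for every $j$; for $0<q<1$ this is exactly the stated hypothesis $\Re(T_j)>0$, giving absolute convergence on the claimed domain.

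For \eqref{equ:merom} I would expand each factor by the generalized binomial theorem, $\frac1{(1-q^{k_j})^{s_j}}=\sum_{r_j\ge0}\binom{s_j+r_j-1}{r_j}q^{k_jr_j}$, valid for every complex $s_j$ because $|q^{k_j}|=|q|^{k_j}<1$. In the convergence domain the resulting multiple series converges absolutely, so Fubini lets me interchange the sums over $\bfr$ and over $k_1>\dots>k_d>0$. The inner sum is again evaluated by the $n_j$-substitution: one finds $\sum_{k_1>\dots>k_d>0}q^{\sum_j k_j(t_j+r_j)}=\prod_{j=1}^d\frac{q^{T_j+R_j}}{1-q^{T_j+R_j}}$. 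It then remains only to repackage the numerator: since each of $t_j,r_j$ occurs in exactly $d+1-j$ of the partial sums, $\sum_j(T_j+R_j)=\sum_j(d+1-j)(t_j+r_j)$, and distributing this across the $d$ factors produces exactly \eqref{equ:merom}.

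For the meromorphic continuation and the poles I would argue that \eqref{equ:merom} converges locally uniformly off the locus $L:=\bigcup_j\{T_j\in\Z_{\le0}+\frac{2\pi i}{\log q}\Z\}$. The only possible singularities are the zeros of $1-q^{T_j+R_j}$, i.e.\ $T_j+R_j\in\frac{2\pi i}{\log q}\Z$; as $R_j$ runs over $\ZN$ these all lie in $L$. Away from $L$ one gets a uniform positive lower bound for $|1-q^{T_j+R_j}|$ over all $R_j\ge0$ (for large $R_j$ because $q^{T_j+R_j}\to0$, for the finitely many small $R_j$ by compactness), while $\binom{s_j+r_j-1}{r_j}$ grows only polynomially against the geometric decay of $q^{(d+1-j)r_j}$; so the series is uniformly convergent on compacta and holomorphic there. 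To see the singularities along $L$ are simple poles rather than essential, I would, near a generic point of the component $\{T_j=-R_j^\ast+\frac{2\pi i}{\log q}m\}$, split the sum according to whether $r_1+\dots+r_j$ equals the unique integer $R_j^\ast$ that makes the $j$-th factor vanish (unique because $\frac{2\pi i}{\log q}\notin\R$, as $|q|<1$ forces $\log|q|\neq0$, so $\frac{2\pi i}{\log q}\Z\cap\Z=\{0\}$). The common factor $\frac1{1-q^{T_j+R_j^\ast}}$ then pulls out of the singular part, leaving a holomorphic series; since $1-q^z$ has only simple zeros this exhibits a simple pole, and products of such factors handle the intersections of components.

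The main obstacle I anticipate is precisely this last step: because \emph{infinitely} many terms of \eqref{equ:merom} are singular along each component of $L$, one cannot isolate finitely many bad terms, and the whole argument rests on the observation that all simultaneously-singular terms share the \emph{same} denominator $1-q^{T_j+R_j^\ast}$—a consequence of $\frac{2\pi i}{\log q}$ being non-real—so that it factors out cleanly and the remaining series is holomorphic.
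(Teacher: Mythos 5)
Your proposal is correct, and it is essentially the same argument that underlies the paper's statement: the paper itself supplies no proof here, deferring to Proposition~2.2 of \cite{Zhao2007c}, whose derivation is exactly your route --- generalized binomial expansion of each $(1-q^{k_j})^{-s_j}$, Fubini on the absolutely convergent double sum in the region $\Re(t_1+\cdots+t_j)>0$, and evaluation of the telescoped geometric series (via $n_j=k_j-k_{j+1}$) to arrive at \eqref{equ:merom}. Your handling of the continuation is also the right one; in particular, the observation that $\frac{2\pi i}{\log q}\Z\cap\Z=\{0\}$ --- so that along each component of the polar locus the infinitely many singular terms all share the \emph{single} vanishing denominator $1-q^{t_1+r_1+\cdots+t_j+r_j}$, which factors out against a series that is holomorphic near that component --- is precisely the point that makes the poles simple rather than essential, and it is the step a more careless term-by-term argument would miss.
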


\begin{cor}\label{cor:convergenceDomain}
Let $\bfs=(s_1,\dots, s_d)\in\Z^d$. Then
\begin{itemize}
  \item [\upshape{(i)}]$\zeta_q^\I[\bfs]$ converges if  $s_1+\dots+s_j>j$ for all $j=1,\dots,d$.

  \item [\upshape{(ii)}]$\zeta_q^\II[\bfs]$ converges if  $s_1+\dots+s_j>0$ for all $j=1,\dots,d$.

  \item [\upshape{(iii)}]$\zeta_q^\III[\bfs]$ always converges.

  \item [\upshape{(iv)}]$\zeta_q^\IV[\bfs]$ converges if  $s_1+\dots+s_j>1$ for all $j=1,\dots,d$.
\end{itemize}
\end{cor}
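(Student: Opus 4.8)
The plan is to derive all four statements as immediate specializations of Proposition~\ref{prop:fAuxFunction}. That proposition supplies a sufficient condition for convergence of the general series $\zeta_q^{(t_1,\dots,t_d)}[s_1,\dots,s_d]$, namely $\Re(t_1+\dots+t_j)>0$ for every $j=1,\dots,d$. Each of the four types in the corollary arises from this general series by a specific choice of the auxiliary tuple $\bft=(t_1,\dots,t_d)$ as recorded in Table~\ref{Table:qMZV}; since here $\bfs\in\Z^d$, every resulting $t_j$ is a real integer, so the criterion $\Re(t_1+\dots+t_j)>0$ collapses to the honest inequality $t_1+\dots+t_j>0$. (The only discrepancy between the standard form $\zeta_q$ and the modified form $\frakz_q$ is the prefactor $(1-q)^{|\bfs|}$, a nonzero constant for fixed $q$ with $|q|<1$, hence irrelevant to convergence; so it is harmless to read off $\bft$ from the modified-form entries of the table.)

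First I would record the auxiliary tuples: type \I has $t_j=s_j-1$; type \II has $t_j=s_j$; type \III has $t_1=1$ and $t_j=0$ for $j\ge2$; and type \IV has $t_1=s_1-1$ and $t_j=s_j$ for $j\ge2$. Then I would evaluate the partial sums $t_1+\dots+t_j$ and substitute into the criterion. Type \II is instant: $t_1+\dots+t_j=s_1+\dots+s_j$, yielding $s_1+\dots+s_j>0$, which is (ii). For type \I the uniform shift by $-1$ in each coordinate accumulates to $t_1+\dots+t_j=(s_1+\dots+s_j)-j$, yielding $s_1+\dots+s_j>j$, which is (i). For type \IV the shift touches only the first coordinate, so $t_1+\dots+t_j=(s_1+\dots+s_j)-1$, yielding $s_1+\dots+s_j>1$, which is (iv). For type \III the partial sum is $t_1+\dots+t_j=1$ for every $j\ge1$, independently of $\bfs$, so the condition $1>0$ holds with no hypothesis on the arguments; this is precisely the mechanism behind the unconditional convergence in (iii), valid even for negative $s_j$.

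There is essentially no obstacle beyond bookkeeping, since the analytic content is entirely contained in Proposition~\ref{prop:fAuxFunction} and the remaining work is to identify each $\bft$ and evaluate these elementary partial sums. The one point worth a sentence of care is that Proposition~\ref{prop:fAuxFunction} provides only a \emph{sufficient} condition, which is exactly what the corollary asserts (each clause reads ``converges if''); I would therefore refrain from claiming necessity, and in particular present the type \III computation as yielding genuinely unconditional convergence rather than a constraint on the $s_j$.
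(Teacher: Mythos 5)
Your proposal is correct and follows exactly the route the paper intends: the corollary is stated without proof precisely because it is the immediate specialization of Proposition~\ref{prop:fAuxFunction} to the auxiliary tuples $\bft$ of Table~\ref{Table:qMZV}, which is what you carry out. Your bookkeeping of the partial sums $t_1+\dots+t_j$ for each of the four types is accurate, and the remarks about the harmless prefactor $(1-q)^{|\bfs|}$ and about sufficiency versus necessity are sound.
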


\begin{defn}\label{defn:admissibleComposition}
For convenience, a composition $\bfs\in\Z_{\ge 0}$ is said to be \emph{type $\gt$-admissible}
if $\bfs$ satisfies the condition for type $\gt$ $q$-MZVs in the corollary.
Here and in what follows, $\gt=\I$, \II, \III, or \IV.
\end{defn}

\section{Rota-Baxter algebra}\label{sec:Rota-BaxterAlg}
In this section we briefly review some fundamental facts
of Rota-Baxter algebras which will be crucial in the study of
the $q$-analog of \emph{shuffle} relations for all of $q$-MZVs
considered in this paer.

\begin{defn}\label{defn:Rota-Baxter algebra}
Fix an algebra $A$ over a commutative ring $R$ and an element $\gl\in R$.
We call $A$ a Rota-Baxter $R$-algebra and $\calP$ a Rota-Baxter operator
of weight $\gl$ if the operator $\calP$ satisfies the following Rota-Baxter
relation of weight $\gl$:
\begin{equation}\label{equ:Rota-BaxterRelation}
\calP(x)\calP(y)=\calP(\calP(x)y)+\calP(x\calP(y))+\gl\calP(xy) \quad\forall x,y\in A.
\end{equation}
\end{defn}

Recall that for any continuous function $f(x)$ on $[\ga,\gb]$
Jackson's $q$-integral is defined by
\begin{equation}\label{equ:q-intJackson}
\int_\ga^\gb f(x)\,d_qx:=\sum_{k\ge 0} f\big(\ga+q^k(\gb-\ga)\big)
(q^k-q^{k+1})(\gb-\ga).
\end{equation}
Taking $\ga=0$ and $\gb=t$ in \eqref{equ:q-intJackson} we now set
\begin{equation}\label{equ:Jackson}
\bfJ[f](t):= (1-q)\sum_{k\ge 0} f(q^kt) q^kt
=(1-q)\sum_{k\ge 0}\bfE^k\big[\idf\cdot f\big](t)
= (1-q)\bfP [\idf\cdot f](t). 	
\end{equation}
where $\idf(t)=t$ is the identity function,
\begin{equation*}
\bfE[f](t):=\bfE_q[f](t):=f(qt),\ \text{and}\
\bfP[f](t):=\bfP_q[f](t):=f(t)+f(qt)+f(q^2t)+\cdots
\end{equation*}
are the $q$-\emph{expanding} and the (\emph{principle}) $q$-\emph{summation operators}, respectively.
We also need to define the (\emph{remainder}) $q$-\emph{summation operator}
\begin{equation*}
\bfR[f](t):=\bfR_q[f](t):=f(qt)+f(q^2t)+\cdots=(\bfP[f]-[f])(t).
\end{equation*}
So, $\bfP$ is the principle part (i.e. the whole thing) while $\bfR$ is the
remainder (i.e., without the first term). Clearly, $\bfP=\bfR+\idf$ where,
as an operator, $\idf[f]=f$. This implies that $\bfP\bfR=\bfR\bfP$.

Let $t\Q[\![t,q]\!]$ be the ring of formal series in two variables with $t>0$.
Then $\bfJ$, $\bfE$, $\bfP$ and $\bfR$ are all $\Q[[q]]$-linear endomorphism of $t\Q[\![t,q]\!]$.
We can further define the inverse to
$\bfP$ which is called the $q$-\emph{difference operator}:
\begin{equation}
\label{equ:qDifference}
	\bfD :=\idf -\bfE.
\end{equation}

The following results extend those of \cite[(21)-(23)]{CEM2013}. In the final computation
we will not need $\bfD$ since we will only consider nonnegative arguments
in all the $q$-MZVs. But in the theoretical part of this paper we do need to use $\bfD$
for type \III $q$-MZVs.

\begin{prop}\label{prop:checkRBalg}
For any $f,g\in t\Q[\![t,q]\!]$ we have
{\allowdisplaybreaks
\begin{align}\label{equ:RisRBalg}
	\bfP[f]\bfP[g] &\, =\bfP\big[\bfP[f]g\big]+\bfP\big[f\bfP[g]\big] -\bfP[fg],\\
\label{equ:PisRBalg}
	\bfR[f]\bfR[g] &\, =\bfR\big[\bfR[f]g\big]+\bfR\big[f\bfR[g]\big]+\bfR[fg],\\
\label{equ:PRproduct}
	\bfR[f]\bfP[g] &\, =\bfR\big[\bfR[f]g\big]+\bfR\big[f\bfR[g]\big]+\bfR[f]g+\bfR[fg],\\
\label{equ:JisRBalg}
 \bfJ[f]\bfJ[g] &\, =\bfJ\big[\bfJ[f]g\big]+\bfJ\big[f\bfJ[g]\big] - (1-q)\bfJ\big[\idf f g\big],\\
\label{equ:JisRBalg2}
 &\, =\bfJ\big[ f\bfJ[g] \big]+ q\bfJ\big[ \bfJ \big[E[f] \big]g \big],\\
\label{equ:DDproduct}
	\bfD[f]\bfD[g] &\, =\bfD[f]g+ f\bfD[g]-\bfD[fg],\\
\label{equ:DPproduct}
	\bfD[f]\bfP[g] &\, =\bfD\big[f\bfP[g]\big]+\bfD[f]g - fg,\\
\label{equ:DRproduct}
	\bfD[f]\bfR[g] &\, =\bfD\big[f\bfR[g]\big]+\bfD[fg] - fg,\\
\label{equ:DPinverse}
	\bfD\bfP &\, =\bfP\bfD=\idf, \qquad \bfP\bfR=\bfR\bfP.
\end{align}}
\end{prop}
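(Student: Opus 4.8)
The plan is to reduce all nine identities to two structural facts about the expanding operator $\bfE$: it is multiplicative, $\bfE[fg]=\bfE[f]\bfE[g]$, since $(fg)(qt)=f(qt)g(qt)$; and $\bfP=\sum_{k\ge0}\bfE^k$, $\bfR=\sum_{k\ge1}\bfE^k$ as endomorphisms of $t\Q[\![t,q]\!]$ (these series converge coefficientwise because $\sum_{k\ge0}q^{kn}=1/(1-q^n)\in\Q[\![q]\!]$ for $n\ge1$). From the second fact I read off $\bfE\bfP=\bfP\bfE=\bfR=\bfP-\idf$, the operator identities that drive the whole proposition. I would establish \eqref{equ:RisRBalg} and \eqref{equ:PisRBalg} first, as these are the only genuinely combinatorial steps; everything else then follows formally.

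For \eqref{equ:RisRBalg} I would expand $\bfP[f]\bfP[g](t)=\sum_{i,j\ge0}f(q^it)g(q^jt)$ and split the index set into the regions $i\ge j$, $i\le j$, and $i=j$. A short computation identifies $\bfP[\bfP[f]g]$ with the terms where the $f$-index is $\ge$ the $g$-index, $\bfP[f\bfP[g]]$ with those where it is $\le$, and $\bfP[fg]$ with the diagonal; since the two half-planes overlap exactly on the diagonal, $\bfP[\bfP[f]g]+\bfP[f\bfP[g]]-\bfP[fg]$ rebuilds the full double sum. Restricting the identical partition to strictly positive indices gives \eqref{equ:PisRBalg}: here both half-planes are strict, so the diagonal is missed entirely and must be restored by $+\bfR[fg]$. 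The mixed relation \eqref{equ:PRproduct} then needs no new work, since $\bfP[g]=\bfR[g]+g$ gives $\bfR[f]\bfP[g]=\bfR[f]\bfR[g]+\bfR[f]g$ and \eqref{equ:PisRBalg} expands the first term.

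The two Jackson identities I would deduce from \eqref{equ:RisRBalg} by changing test functions. Applying \eqref{equ:RisRBalg} to $\idf f$ and $\idf g$ and using the defining relation $\bfP[\idf\,\psi]=\tfrac1{1-q}\bfJ[\psi]$, together with the fact that $\idf$ (multiplication by $t$) commutes past the operators, converts each term on the right into a $\bfJ$-expression carrying a factor $1/(1-q)$; clearing denominators produces \eqref{equ:JisRBalg}. For the alternative form \eqref{equ:JisRBalg2} the key observation is the telescoping identity $\bfJ[f]=(1-q)\,\idf f+q\,\bfJ[\bfE f]$, obtained by peeling off the $k=0$ term of $\bfJ[f](t)=(1-q)\sum_{k\ge0}q^kt\,f(q^kt)$ and reindexing the rest. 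Multiplying this relation by $g$ and applying $\bfJ$ shows $\bfJ[\bfJ[f]g]-(1-q)\bfJ[\idf fg]=q\,\bfJ[\bfJ[\bfE f]g]$, and substituting into \eqref{equ:JisRBalg} gives \eqref{equ:JisRBalg2}.

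The difference relations are purely algebraic consequences of $\bfD=\idf-\bfE$ and multiplicativity. For \eqref{equ:DDproduct} I would expand $(f-\bfE f)(g-\bfE g)$, replace $(\bfE f)(\bfE g)$ by $\bfE[fg]$, and match it against the expansion of $\bfD[f]g+f\bfD[g]-\bfD[fg]$. For \eqref{equ:DPproduct} I would expand $\bfD[f\bfP[g]]=f\bfP[g]-(\bfE f)(\bfE\bfP[g])$, substitute $\bfE\bfP=\bfR$ and $\bfR[g]+g=\bfP[g]$, and watch the right-hand side collapse to $(f-\bfE f)\bfP[g]=\bfD[f]\bfP[g]$; then \eqref{equ:DRproduct} follows from \eqref{equ:DPproduct} upon writing $\bfR[g]=\bfP[g]-g$ and invoking linearity of $\bfD$. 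Finally \eqref{equ:DPinverse} is immediate: $\bfD\bfP=(\idf-\bfE)\bfP=\bfP-\bfR=\idf$ and $\bfP\bfD=\bfP-\bfP\bfE=\bfP-\bfR=\idf$, while $\bfP\bfR=\bfR\bfP$ holds because both equal the polynomial $\bfP^2-\bfP$ in $\bfP$. The only genuine content is the index-splitting behind \eqref{equ:RisRBalg} and \eqref{equ:PisRBalg}; the main thing to watch is the overloaded symbol $\idf$ (the function $t\mapsto t$ versus the identity operator) and the precise index ranges, after which the remaining identities are bookkeeping.
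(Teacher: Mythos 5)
Your proof is correct, and it is genuinely different from (and more self-contained than) the paper's. The paper disposes of the proposition in two lines: \eqref{equ:RisRBalg}, \eqref{equ:DDproduct} and \eqref{equ:DPproduct} are quoted as identities (21), (23) and (26) of \cite{CEM2013}, and the remaining six are said to follow easily from $\bfR=\bfP-\idf$. You instead prove everything from first principles: the decomposition of $\{(i,j): i,j\ge 0\}$ into the overlapping half-planes $i\ge j$ and $i\le j$ yields \eqref{equ:RisRBalg}, with the overlap on the diagonal accounting for the weight $-1$ term $-\bfP[fg]$, while the disjoint decomposition of $\{i,j\ge 1\}$ into $i>j$, $i<j$ and $i=j$ yields \eqref{equ:PisRBalg} with its weight $+1$ term $+\bfR[fg]$; the Jackson identities come from substituting $\idf f$, $\idf g$ into \eqref{equ:RisRBalg} and from the telescoping relation $\bfJ[f]=(1-q)\,\idf f+q\,\bfJ[\bfE f]$; and the $\bfD$-identities are pure algebra from $\bfD=\idf-\bfE$, $\bfE\bfP=\bfP\bfE=\bfR$ and $\bfE[fg]=\bfE[f]\bfE[g]$. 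In effect you have reproved the three identities the paper imports from the literature, and your treatment of the other six matches what the paper means by ``follows easily''; what your route buys is a self-contained argument that makes the combinatorial origin of the two Rota-Baxter weights visible, at the cost of length. One wording caution: ``$\idf$ commutes past the operators'' is false if read as an operator identity, since $\idf\cdot\bfP[h]\ne\bfP[\idf\cdot h]$ in general (the left side is $t\sum_{k\ge 0}h(q^kt)$, the right side $\sum_{k\ge 0}q^kt\,h(q^kt)$). Fortunately your computation never uses this: each step only rearranges factors inside the argument of a single $\bfP$, i.e., invokes commutativity of pointwise multiplication, so the proof stands once that sentence is rephrased.
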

\begin{proof}
The identities \eqref{equ:RisRBalg}, \eqref{equ:DDproduct} and
\eqref{equ:DPproduct} are just (21), (23) and (26) of \cite{CEM2013}, respectively.
All the others follow from  $\bfR=\bfP-\idf$ easily.
\end{proof}

By Proposition~\ref{prop:checkRBalg} we see that $\bfP$ and $\bfR$ are both Rota-Baxter
operators on $t\Q[\![t,q]\!]$ (of weight $-1$ and 1, respectively) but $\bfD$ is not.
In fact, $\bfD$ satisfies the condition \eqref{equ:DDproduct} of a differential
Rota-Baxter operator \cite{GuoKe2008}. Moreover, it is \emph{invertible} in the sense that
Rota-Baxter operator $\bfP$ and the differential $\bfD$ are mutually inverse by \eqref{equ:DPinverse}.

We end this section with an identity which will be used to interpret Takeyama's Resummation
Identity in \cite{Takeyama2013}. For any $n\in\N$, set
\begin{equation*}
    \bfP^n=\underbrace{\bfP\circ \cdots \circ\bfP}_{n\text{ times}}\quad\text{and}\quad
    \bfR^n=\underbrace{\bfR\circ \cdots \circ\bfR}_{n\text{ times}}.
\end{equation*}

\begin{thm}\label{thm:ResumIdRBform}
Let $d\in\N$ and $\ga_j,\gb_j\in\N$ for all $j=1,\dots,d$. Let $\bfy(t)=\frac{t}{1-t}$.
Then we have
\begin{equation}\label{equ:dualityRBAlg}
\bfR^{\ga_{1}}\bfy^{\gb_{1}}\cdots\bfR^{\ga_{\ell}}\bfy^{\gb_{\ell}}(t)
=\sum_{\substack{
j_1\ge\gb_1,\dots,\, j_\ell\ge \gb_\ell \\
k_1\ge\ga_1,\dots,\, k_\ell\ge \ga_\ell}}
\prod_{r=1}^\ell
 \bigg[\binom{j_r-1}{\gb_r-1}\binom{k_r-1}{\ga_r-1}  q^{k_r\sum_{s=r}^\ell j_s} t^{j_r}\bigg].
\end{equation}
\end{thm}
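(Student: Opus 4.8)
The plan is to reduce everything to the single structural fact that each monomial $t^j$ is an eigenfunction of the remainder operator $\bfR$. From the definition $\bfR[f](t)=\sum_{k\ge1}f(q^kt)$ one computes directly
\[
\bfR[t^j]=\sum_{k\ge1}q^{kj}t^j=\frac{q^j}{1-q^j}\,t^j,
\]
so that $\bfR^{\ga}[t^j]=\bigl(q^j/(1-q^j)\bigr)^{\ga}t^j$. Expanding the negative power by the binomial series yields the two expansions I would use throughout:
\[
\bfy^{\gb}=\Bigl(\frac{t}{1-t}\Bigr)^{\gb}=\sum_{j\ge\gb}\binom{j-1}{\gb-1}t^j,
\qquad
\bfR^{\ga}[t^j]=\sum_{k\ge\ga}\binom{k-1}{\ga-1}q^{jk}\,t^j.
\]
Here I read each interior factor $\bfy^{\gb_r}$ (for $r<\ell$) as multiplication by $(t/(1-t))^{\gb_r}$ and the rightmost $\bfy^{\gb_\ell}$ as the power series above. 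Both series lie in $t\Q[\![t,q]\!]$, and since $\bfR^{\ga}$ acts diagonally on the $t$-grading, the whole composite is a well-defined endomorphism of $t\Q[\![t,q]\!]$.

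With this in hand I would prove \eqref{equ:dualityRBAlg} by downward induction on the number of blocks. Writing
\[
F_r(t):=\bfR^{\ga_r}\bfy^{\gb_r}\cdots\bfR^{\ga_\ell}\bfy^{\gb_\ell}(t),\qquad F_{\ell+1}:=1,
\]
the composition obeys the one-step recursion $F_r=\bfR^{\ga_r}\bigl[(t/(1-t))^{\gb_r}F_{r+1}\bigr]$, and $F_1$ is exactly the left-hand side. The claim to be proved inductively is that for every $r$
\[
F_r(t)=\sum_{\substack{j_r\ge\gb_r,\dots,j_\ell\ge\gb_\ell\\ k_r\ge\ga_r,\dots,k_\ell\ge\ga_\ell}}\ \prod_{s=r}^{\ell}\binom{j_s-1}{\gb_s-1}\binom{k_s-1}{\ga_s-1}q^{k_s\sum_{u=s}^{\ell}j_u}\ t^{\,j_r+\cdots+j_\ell}.
\]
The base case $r=\ell$ is immediate from the two expansions above (with $\sum_{u=\ell}^\ell j_u=j_\ell$), and the case $r=1$ is precisely \eqref{equ:dualityRBAlg} after collecting $t^{j_1+\cdots+j_\ell}=\prod_r t^{j_r}$.

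For the inductive step I would take the expression for $F_{r+1}$, whose monomials are $t^{j_{r+1}+\cdots+j_\ell}$, and multiply by $(t/(1-t))^{\gb_r}=\sum_{j_r\ge\gb_r}\binom{j_r-1}{\gb_r-1}t^{j_r}$; the essential point is that this merges the new index $j_r$ into the total degree, producing monomials $t^{M}$ with $M=\sum_{u=r}^{\ell}j_u$. Applying $\bfR^{\ga_r}$ then multiplies each $t^M$ by $\sum_{k_r\ge\ga_r}\binom{k_r-1}{\ga_r-1}q^{Mk_r}$, which is exactly the missing factor $\binom{k_r-1}{\ga_r-1}q^{k_r\sum_{u=r}^{\ell}j_u}$, completing the induction. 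The one delicate point — the step I would treat most carefully — is this accumulation of degree inside the exponent of $q$: the operator $\bfR^{\ga_r}$ cannot distinguish the individual $j_s$, only their running sum $\sum_{u\ge r}j_u$, and it is this telescoping that produces the nested exponents $\sum_{s=r}^{\ell}j_s$ of the statement. I would also record that no convergence issue arises, since for each fixed power of $t$ only finitely many tuples $(j_r,\dots,j_\ell)$ contribute (each $j_s\ge1$), and every coefficient is a genuine element of $\Q[\![q]\!]$ because each geometric factor $q^{Mk}$ has positive $q$-valuation.
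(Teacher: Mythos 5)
Your proof is correct and follows essentially the same route as the paper: both rest on the eigenvalue formula $\bfR^{\ga}[t^j]=q^{\ga j}t^j/(1-q^j)^{\ga}$ together with binomial expansions of $\bfy^{\gb}$ and of $(q^j/(1-q^j))^{\ga}$, and both induct over the blocks from the innermost outward so that the accumulated $t$-degree produces the nested exponents $q^{k_r\sum_{s\ge r}j_s}$. The only difference is bookkeeping: the paper strengthens the induction hypothesis by carrying an auxiliary factor $t^m$ and inducting on the number of blocks, whereas you run a downward induction on the suffix compositions $F_r$ with the accumulated degree kept inside the expansion --- these are mirror images of the same argument.
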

\begin{proof}
First we show that
\begin{equation}\label{equ:ResumIdRonly}
\bfR^{\ga}(t^j)=\frac{q^{\ga j } t^j}{(1-q^j)^\ga }
\end{equation}
In deed, if $\ga=1$ then
\begin{equation*}
 \bfR(t^j) =\sum_{k\ge 1} q^{kj} t^j =\frac{q^j t^j}{1-q^j}.
\end{equation*}
So \eqref{equ:ResumIdRonly} can be proved easily by induction.

Now we proceed to prove that for any integer $m\ge 0$
\begin{equation}\label{equ:dualityRBAlgInduction}
\bfR^{\ga_{1}}\bfy^{\gb_{1}}\cdots\bfR^{\ga_{\ell}}\Big(\bfy^{\gb_{\ell}}(t)\cdot t^m\Big)
=\sum_{\substack{
j_1\ge\gb_1,\dots,\, j_\ell\ge \gb_\ell \\
k_1\ge\ga_1,\dots,\, k_\ell\ge \ga_\ell}}
t^m\prod_{r=1}^\ell
 \bigg[\binom{j_r-1}{\gb_r-1}\binom{k_r-1}{\ga_r-1}  q^{k_r(m+\sum_{s=r}^\ell j_s)} t^{j_r}\bigg].
\end{equation}
If $\ell=1$ then we have
\begin{align*}
\bfR^{\ga}\Big(\bfy^{\gb}(t)\cdot t^m \Big)
=&\, \bfR^{\ga} \left(\Big(\frac{t}{1-t}\Big)^\gb t^m\right)
=\bfR^{\ga} \sum_{j\ge 0} \binom{\gb+j-1}{j} t^{m+\gb+j} \\
=&\, \bfR^{\ga} \sum_{j\ge \gb} \binom{j-1}{\gb-1} t^{m+j}  \\
=&\, \sum_{j\ge \gb} \binom{j-1}{\gb-1} \frac{q^{\ga(m+j)} t^{m+j}}{(1-q^{m+j})^\ga }
        \qquad(\text{by \eqref{equ:ResumIdRonly}})  \\
=&\, \sum_{j\ge \gb} \binom{j-1}{\gb-1} \sum_{k\ge 0}  \binom{\ga+k-1}{k} q^{(\ga+k)(m+j)} t^{m+j} \\
=&\, \sum_{j\ge \gb}\sum_{k\ge\ga} \binom{j-1}{\gb-1}\binom{k-1}{\ga-1}  q^{k(m+j)} t^{m+j}.
\end{align*}
This proves \eqref{equ:dualityRBAlgInduction} when $\ell=1$.
In general
\begin{multline*}
\bfR^{\ga_{1}}\bfy^{\gb_{1}}\cdots\bfR^{\ga_{\ell-1}}\Big(\bfy^{\gb_{\ell-1}}(t)
    \Big(\bfR^{\ga_{\ell}}\Big(\bfy^{\gb_{\ell}}(t)\cdot t^m\Big)\Big)\Big) \\
=\sum_{j_\ell\ge \gb_\ell}\sum_{k_\ell\ge\ga_\ell} \binom{j_\ell-1}{\gb_\ell-1}\binom{k_\ell-1}{\ga_\ell-1}  q^{k_\ell(m+j_\ell) }
\bfR^{\ga_{1}}\bfy^{\gb_{1}}\cdots\bfR^{\ga_{\ell-1}}\Big(\bfy^{\gb_{\ell-1}}(t) \cdot t^{m+j_\ell}\Big).
\end{multline*}
So \eqref{equ:dualityRBAlgInduction} follows immediately by induction.
We can now finish the proof of the theorem by taking $m=0$.
\end{proof}

\begin{cor}\label{cor:permuteRy}
Let $d\in\N$ and $\ga_j,\gb_j\in\N$ for all $j=1,\dots,d$. Then we have
\begin{equation}\label{equ:permuteRyLimited}
\bfR^{\ga_{1}}\bfy^{\gb_{1}}\cdots\bfR^{\ga_{\ell}}\bfy^{\gb_{\ell}}(1)
=\bfR^{\gb_{\ell}}\bfy^{\ga_{\ell}}\cdots\bfR^{\gb_{1}}\bfy^{\ga_{1}}(1).
\end{equation}
\end{cor}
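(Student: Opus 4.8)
The plan is to apply Theorem~\ref{thm:ResumIdRBform} to both sides of \eqref{equ:permuteRyLimited} and then exhibit an index-set bijection under which the two resulting sums agree term by term. First I would specialize the identity \eqref{equ:dualityRBAlg} to $t=1$; this is legitimate as a formal identity in $\Q[\![q]\!]$ because the $q$-exponent $\sum_{r=1}^\ell k_r\sum_{s=r}^\ell j_s$ grows with the summation variables, so only finitely many terms contribute to each power of $q$. Setting $t=1$ makes every factor $t^{j_r}$ equal to $1$, so the left-hand side of \eqref{equ:permuteRyLimited} becomes
\begin{equation*}
\sum_{\substack{j_1\ge\gb_1,\dots,j_\ell\ge\gb_\ell\\ k_1\ge\ga_1,\dots,k_\ell\ge\ga_\ell}}
\prod_{r=1}^\ell \binom{j_r-1}{\gb_r-1}\binom{k_r-1}{\ga_r-1}\,q^{k_r\sum_{s=r}^\ell j_s}.
\end{equation*}
A useful first move is to rewrite the total $q$-exponent as the symmetric-looking double sum $\sum_{1\le r\le s\le\ell} k_r j_s$, which detaches it from the particular ordering of the factors.

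Next I would observe that the word $\bfR^{\gb_\ell}\bfy^{\ga_\ell}\cdots\bfR^{\gb_1}\bfy^{\ga_1}$ on the right of \eqref{equ:permuteRyLimited} is again of the shape covered by Theorem~\ref{thm:ResumIdRBform}, now with $\ga'_r=\gb_{\ell+1-r}$ and $\gb'_r=\ga_{\ell+1-r}$. Applying the theorem (again at $t=1$) produces the analogous sum over new variables $j'_r\ge\gb'_r=\ga_{\ell+1-r}$ and $k'_r\ge\ga'_r=\gb_{\ell+1-r}$, with $q$-exponent $\sum_{1\le r\le s\le\ell} k'_r j'_s$ and binomial factors $\binom{j'_r-1}{\ga_{\ell+1-r}-1}\binom{k'_r-1}{\gb_{\ell+1-r}-1}$.

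I would then define the change of variables $j_r=k'_{\ell+1-r}$ and $k_r=j'_{\ell+1-r}$, i.e.\ reverse the index $r\mapsto\ell+1-r$ and simultaneously interchange the roles of the $j$ and $k$ variables. A direct check shows this is a bijection between the two index sets: the constraint $j_r\ge\gb_r$ corresponds to $k'_{\ell+1-r}\ge\gb_{\ell+1-(\ell+1-r)}=\gb_r$, and similarly $k_r\ge\ga_r$ corresponds to $j'_{\ell+1-r}\ge\ga_r$. Under this substitution the two binomial coefficients simply swap and match up, and the double-sum form of the $q$-exponent is preserved, since reversing $r\mapsto\ell+1-r$, $s\mapsto\ell+1-s$ turns the range $r\le s$ into $\ell+1-r\ge\ell+1-s$ and hence $\sum_{r\le s}k'_r j'_s=\sum_{r\le s}k_r j_s$. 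Thus the summands coincide term by term, which proves \eqref{equ:permuteRyLimited}.

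The step I expect to be the main obstacle is the bookkeeping in this last paragraph: one must verify carefully that the reversal $r\mapsto\ell+1-r$ sends the partial order $r\le s$ to itself (so that the $q$-exponent is genuinely invariant rather than merely superficially similar) and that each binomial factor is paired with the correctly constrained new variable. None of this is deep, but it is exactly where a sign or index slip would go unnoticed; once these verifications are in place the corollary is immediate.
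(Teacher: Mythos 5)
Your proof is correct and is essentially the same as the paper's: both apply Theorem~\ref{thm:ResumIdRBform} at $t=1$ and identify the two resulting sums via the substitution $j_r\leftrightarrow k_{\ell+1-r}$, i.e.\ index reversal combined with swapping the roles of the $j$ and $k$ variables. Your symmetric rewriting of the exponent as $\sum_{1\le r\le s\le\ell}k_r j_s$ is just a compact version of the chain of reindexings the paper performs, so the two arguments coincide.
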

\begin{proof}
In \eqref{equ:dualityRBAlg} we use the substitutions $j_r\leftrightarrow k_{\ell+1-r}$
for all $r=1,\dots,\ell$. Then we have
\begin{align*}
\sum_{r=1}^\ell \sum_{s=r}^\ell j_s k_r
\lra &\, \sum_{r=1}^\ell \sum_{s=r}^\ell j_{\ell+1-r} k_{\ell+1-s}
= \sum_{s=1}^\ell \sum_{r=1}^s j_{\ell+1-r} k_{\ell+1-s} \\
=&\, \sum_{s=1}^\ell \sum_{r=1}^{\ell+1-s} j_{\ell+1-r} k_s
=\sum_{s=1}^\ell \sum_{r=s}^{\ell} j_r k_s
=\sum_{r=1}^\ell k_r\sum_{s=r}^{\ell} j_s.
\end{align*}
which follows from $s\leftrightarrow\ell+1-s$ followed by $r\leftrightarrow\ell+1-r$ and $r\leftrightarrow s$.
This proves the corollary.
\end{proof}

\section{{\lowercase {$q$}}-analogs of Hoffman algebras} \label{sec:qHoffmanALg}
We know that (regularized) DBSFs lead to many (and conjecturally all)
$\Q$-linear relations among MZVs. The key idea here was first suggested by Hoffman \cite{Hoffman1997}
who used some suitable algebra of words to codify both the stuffle (also called harmonic
shuffle \cite{Terasoma2006} or quasi-shuffle \cite{Hoffman2000}) relations coming from the series
representation of MZVs and the shuffle relations coming from the iterated integral
expressions of MZVs. The detailed regularization process can be found in \cite{IKZ2006}.
To study similar relations of the $q$-MZVs
we should modify the Hoffman algebras in the $q$-analog setting.

The following definition for type I $q$-MZVs was first proposed by Takeyama~\cite{Takeyama2013}.
We adopt different notations here in hoping to give a uniform and more
transparent presentation for all the four types of $q$-MZVs.

First we consider some algebras which will be used
to define the stuffle relations later.

\begin{defn}\label{defn:q-AlgebrafAgth}
Let $X_\gth^*$ be the set of words
on the alphabet $X_\gth=\{a,a^{-1},b,\gth\}$.
Denote by $\fA_\gth=\Q\langle a,b,\gth\rangle$
the noncommutative polynomial $\Q$-algebra of words from $X_\gth^*$. Set
\begin{equation*}
\gam:=b-\gth,\qquad  z_{s}:=a^{s-1}b,\quad z'_{s}:=a^{s-1}\gth,\quad s\in \Z.
\end{equation*}
Let $Y_\tI:=\{\gth\}\cup\{z_{k}\}_{k\ge 1}$,
$Y_\II:=\{z'_{k}\}_{k\ge 0}$, $Y_\III:=\{z_{k}\}_{k\in\Z}$
and $Y_\tIV:=\{\gth\}\cup\{z'_{k}\}_{k\ge 0}$. We point out that $z_0,z'_0\ne \myone$
where $\myone$ is the empty word.
We put a tilde on top of \I and \IV each since we need to consider some kind of regularization
due to convergence issues involved in type \I and \IV $q$-MZVs. This is realized by
the introduction of the letter $\gth$.
Again, we use $Y_\gt^*$ to denote the set of words generated on $Y_\gt$ for any type $\gt$.

Let $\fA_\tI^1$, $\fA_\II^1$, $\fA_\III$ and $\fA_\tIV$
be the subalgebra of $\fA_\gth$ freely generated by the sets
$Y_\tI$, $Y_\II$, $Y_\III$ and $Y_\tIV$, respectively.
Set
\begin{equation*}
\fA_\III^1=\sum_{k\in\Z} z'_{k}\fA_\III \not\subset \fA_\III,\qquad
\fA_\tIV^1:=\Q\myone+\gth\fA_\tIV+\sum_{k\ge 1}z_{k}\fA_\tIV \not\subset \fA_\tIV.
\end{equation*}
Here, all integer subscripts are allowed in $Y_\III$ because
type \III $q$-MZVs converge for all integer arguments. Further, we define
the following subalgebras corresponding to the convergent values:
\begin{alignat*}{3}
\fA_\I^0:=&\, \Q\myone+\sum_{k\ge 2}z_{k}\fA_\tI^1, \qquad &&
\fA_\tI^0:=\Q\myone+\gth\fA_\tI^1+\sum_{k\ge 2}z_{k}\fA_\tI^1 \subsetneq \fA_\tI^1, \\
\fA_\II^0:=&\, \Q\myone+\sum_{k\ge 1}z'_{k}\fA_\II^1 \subsetneq \fA_\II^1, \qquad &&
\fA_\III^0:=\fA_\III^1, \\
\fA_\IV^0:=&\, \Q\myone+\sum_{k\ge 2}z_{k}\fA_\tIV, \qquad &&
\fA_\tIV^0:=\Q\myone+\gth\fA_\tIV+\sum_{k\ge 2}z_{k}\fA_\tIV \subsetneq \fA_\tIV^1.
\end{alignat*}
For each type $\gt$ the words in $\fA_\gt^0$ are called \emph{type $\gt$-admissible}.
This is consistent with Definition~\ref{defn:admissibleComposition}
since we only consider non-negative compositions  $\bfs$.

To define the stuffle product for type $\gt=\tI$ and \II, similar to the MZV case we
define a commutative product $[-,-]_\gt$ first:
\begin{equation}\label{equ:squareBkRels}
[z_{k},z_{l}]_\tI=z_{k+l}+z_{k+l-1},
\quad
[\gth, z_{k}]_\tI=z_{k+1},
\quad
[\gth,\gth]_\tI=z_{2}-\gth,
\quad
[z'_{k},z'_{l}]_\II=z'_{k+l}
\end{equation}
for all $k, l\ge 1$.
Now we define the stuffle product $*_\gt$ on $\fA_\gt^1$
inductively as follows. For any words $\bfu,\bfv\in\fA_\gt^1$ and
letters $\ga,\gb\in Y_\gt$,
we set $\myone*_\gt\bfu=\bfu=\bfu*_\gt\myone$ and
\begin{equation}\label{equ:qStuffle1WordForm}
 (\ga\bfu)*_\gt(\gb\bfv)=\ga(\bfu*_\gt\gb\bfv)+\gb(\ga\bfu*_\gt\bfv)+[\ga,\gb]_\gt(\bfu*\bfv).
\end{equation}
\end{defn}

\begin{rem}

(i). The definition for $*_\tI$ is the same as in \cite{Takeyama2013}.

(ii).
One can check that $*_\gt$ is well-defined for $\gt=\tI$ and \II. Namely,
$\bfu*_\gt\bfv\in\fA_\gt^1$ if $\bfu,\bfv\in\fA_\gt^1$.

(iii).
It is not hard to check that for $\gt=\tI$ and \II,
$(\fA_\gt^0,*_\gt)\subset(\fA_\gt^1,*_\gt)$ as subalgebras.
\end{rem}

In \cite{CEM2013}, the stuffle product $\myqshu$ for type \III $q$-MZVs is defined.
We will modify this in the following way (see the remarks after Theorem~\ref{thm:qIteratedZetaIIICEM}).
Our modified stuffle product for type \III $q$-MZVs will be denoted by $*_\III$.

\begin{defn}\label{defn:StuffleIII}
Define the injective shifting operator $\sif_{-}$ on any word of $\fA_\III^1$ by
acting on the first letter:
\begin{equation}\label{equ:sifOperator}
	\sif_{-}(z'_n \bfw):=z_n\bfw - z_{n-1}\bfw  \quad \text{ for all $n\in\Z$ and $\bfw\in Y_\III^*$}.
\end{equation}
For any $k,l\in\Z$ and any $\bfu,\bfv\in Y_\III^*$
define the stuffle product $*_\III$ by
\begin{equation*}
z'_k\bfu *_\III z'_l\bfv
=z'_k\big(\bfu*_\II\sif_{-}(z'_l\bfv)\big)
    +z'_l\big(\sif_{-}(z'_k\bfu)*_\II\bfv\big)
    +(z'_{k+l}-z'_{k+l-1})(\bfu*_\II\bfv).
\end{equation*}
Here $*_\II$ is the ordinary stuffle with $[z_r,z_s]_\II=z_{r+s}$ for all $r,s\in Z$.
\end{defn}

For type $\tIV$, we provide a definition similar to type \III.

\begin{defn}\label{defn:StuffleIV}
Define a shifting operator $\sif_+$ similar to \eqref{equ:sifOperator} by
\begin{equation*}
	\sif_{+}(z_n \bfw):=z_n\bfw+z_{n-1}\bfw  \quad \text{ for all $n\in\N$ and $\bfw\in Y_\tIV^*$}.
\end{equation*}
Then, for any $k,l\ge 1$ and any $\bfu,\bfv\in Y_\tIV^*$ we set
\begin{align*}
z_k\bfu *_\tIV z_l\bfv
&\, =z_k\big(\bfu*_\II\sif_{+}(z_l\bfv)\big)
    +z_l\big(\sif_{+}(z_k\bfu)*_\II\bfv\big)
    +(z_{k+l}+z_{k+l-1})(\bfu*_\II\bfv),\\
z_k\bfu *_\tIV \gth \bfv
=\gth \bfv *_\tIV z_k\bfu
&\, =z_k\big(\bfu*_\II \gth \bfv \big)
    +\gth \big( \sif_{+}(z_k\bfu) *_\II \bfv\big)
    +z_{k+1}(\bfu*_\II\bfv),\\
\gth\bfu *_\tIV \gth \bfv
&\, =\gth \big(\bfu*_\II \gth \bfv \big)
    +\gth \big( \gth \bfu *_\II \bfv\big)
    +(z_2-\gth)(\bfu*_\II\bfv),
\end{align*}
where $*_\II$ is the ordinary stuffle with $[\gth,\gth]_\II=z_2$,
$[z_r,\gth]_\II=z_{r+1}$ and $[z_r,z_s]_\II=z_{r+s}$ for all $r,s\ge 1$.
\end{defn}

\begin{lem} \label{lem:StuffleIVwell-define}
The stuffle products $*_\III$ and $*_\tIV$ are both well-defined.
Namely, if $\bfu,\bfv\in\fA_\gt^1$ then
$\bfu *_\tIV\bfv\in\fA_\gt^1$ for $\gt=\III$ or $\tIV$.
\end{lem}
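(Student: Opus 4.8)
The plan is to reduce everything to the ordinary quasi-shuffle $*_\II$, whose well-definedness and closure are classical. The first thing I would observe is that, in contrast to the MZV quasi-shuffle, neither defining recursion is genuinely self-referential: in every case of Definitions~\ref{defn:StuffleIII} and~\ref{defn:StuffleIV} the right-hand side is a $\Q$-linear combination of left-multiples of $*_\II$-products of the tails, after applying the shift operators $\sif_{-}$ or $\sif_{+}$. Thus there is no recursion to control beyond that already present inside $*_\II$, the mixed $z$--$\gth$ case for $\tIV$ is defined symmetrically so there is no overdetermination, and the whole statement splits into two checks: that the inner $*_\II$ is itself well-defined and closed on its word algebra, and that each term produced lands in $\fA_\gt^1$.

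For the first check, $*_\II$ is in each case the Hoffman quasi-shuffle attached to the commutative bracket displayed in the definition. Since every bracket value is again a single generator of the relevant alphabet ($z_{r+s}$ for $[z_r,z_s]_\II$, and for type $\tIV$ also $z_{r+1}=[z_r,\gth]_\II$ and $z_2=[\gth,\gth]_\II$), the standard induction on the total length of the two arguments shows that $*_\II$ is well-defined and maps the inner word algebra into itself. The shift operators are harmless for this purpose: $\sif_{-}$ sends $z'_n\bfw$ to $z_n\bfw-z_{n-1}\bfw\in\fA_\III$ and $\sif_{+}$ sends $z_n\bfw$ to $z_n\bfw+z_{n-1}\bfw$, each extended $\Q$-linearly, so they are everywhere defined on the words that actually occur.

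For closure I would read off the leading letter of each term. In the type $\III$ formula the three summands are left-multiplied by $z'_k$, $z'_l$ and $z'_{k+l}-z'_{k+l-1}$; since $\fA_\III^1=\sum_{m\in\Z}z'_m\fA_\III$ and every inner factor lies in $\fA_\III$ (the subscripts $m\in\Z$ being unrestricted), each term, hence the sum, lies in $\fA_\III^1$. In the type $\tIV$ formulas the leading letters are $z_k,z_l,z_{k+l},z_{k+l-1},z_{k+1},z_2$ and $\gth$; because $k,l\ge 1$ forces $k+l-1\ge 1$, all of these belong to the admissible leading set $\{\gth\}\cup\{z_m\}_{m\ge 1}$ of $\fA_\tIV^1$. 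It then remains to see that each inner factor lies in the tail algebra $\fA_\tIV$, after which $z_k\fA_\tIV\subset\fA_\tIV^1$, $\gth\fA_\tIV\subset\fA_\tIV^1$ and so on complete the closure; the empty-word cases $\myone*_\gt\bfu=\bfu$ are immediate, and $\Q$-bilinearity extends the conclusion from words to all of $\fA_\gt^1$.

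The one genuinely delicate point, and where I expect the real work to be, is this last inner-factor check for type $\tIV$. One must verify that products such as $\bfu*_\II\sif_{+}(z_l\bfv)$ land back in the tail algebra $\fA_\tIV$ even though $\sif_{+}$ lowers an index (producing an index-$0$, i.e.\ $z_0$, contribution precisely when $l=1$) and even though the inner stuffle mixes the letter $\gth$ with the $z$'s. The operators $\sif_{\pm}$ are designed exactly to reconcile the ``boundary'' convention carried by a leading letter with the ``internal'' convention used in the tails, so the verification amounts to tracking these conventions through one step of the quasi-shuffle recursion; I would organize it as a short induction on the lengths of $\bfu$ and $\bfv$, peeling one letter at a time and invoking the type $\tIV$ bracket relations to absorb the shifted leading terms. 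The type $\III$ case is the same argument but strictly easier, since there the tails and the image of $\sif_{-}$ share the single flavor $\{z_m\}_{m\in\Z}$, so no mixing occurs.
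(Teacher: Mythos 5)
Your proposal is correct and takes essentially the same route as the paper: split each term of the defining formula into its leading letter, admissible because $k+l-1\ge 1$ when $k,l\ge 1$ (no constraint being needed for type \III, where all integer subscripts are allowed), and its tail, then reduce the tail check to closure of $*_\II$ on the tail algebra together with the behavior of $\sif_{\pm}$. The one point you defer to ``real work'' is in fact immediate, and the paper dispatches it in a single line: since the tail alphabet $Y_\tIV$ contains the subscript-zero letter (the paper stresses $z_0,z'_0\ne\myone$), one has $\sif_{+}(z_l\bfv)=z_l\bfv+z_{l-1}\bfv\in\fA_\tIV$ for every $l\ge 1$, and then your own observation that the $*_\II$-brackets never lower the non-negative subscripts finishes the proof---no further induction on the lengths of $\bfu$ and $\bfv$ is required.
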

\begin{proof}
We prove the lemma for type $\tIV$ only. Type \III is similar but simpler.

First we notice that $k+l-1\ge 1$ if $k,l\ge 1$. So the first word of each of
the terms of $\bfu*_\tIV\bfv$ has the right form. We need to show that after truncating
the first word each term lies in $\fA_\tIV$. Notice that
$\sif_{+}(z_l\bfv),\sif_{+}(z_k\bfu)\in\fA_\tIV$ and $*_\II$ does not
decrease the size the subscripts (which are all non-negative). The lemma is now proved.
\end{proof}

\begin{prop}\label{prop:stuffleCommAss}
Let $\gt=\tI, \II$, \III or $\tIV$. Then the stuffle algebras $(\fA_\gt^1,*_\gt)$ are
all commutative and associative.
\end{prop}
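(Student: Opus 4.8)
The plan is to handle the four products in two groups, according to whether the defining recursion has genuine quasi-shuffle shape. For $\gt=\tI$ and $\gt=\II$ the rule \eqref{equ:qStuffle1WordForm} is exactly Hoffman's quasi-shuffle attached to the alphabet $Y_\gt$ with structure bracket $[-,-]_\gt$ of \eqref{equ:squareBkRels}, and by \eqref{equ:squareBkRels} the bracket of two letters again lies in the $\Q$-span of $Y_\gt$. I would therefore quote Hoffman's theorem \cite{Hoffman2000}: the quasi-shuffle product built from a \emph{commutative} and \emph{associative} bracket on the generators is automatically commutative and associative. This reduces the statement for $\tI$ and $\II$ to the corresponding two properties of $[-,-]_\gt$ on single letters.

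Commutativity of $[-,-]_\gt$ is visible from \eqref{equ:squareBkRels}. Associativity is a finite verification of $[[\ga,\gb]_\gt,\gd]_\gt=[\ga,[\gb,\gd]_\gt]_\gt$ for $\ga,\gb,\gd\in Y_\gt$, extended bilinearly. For $\II$ the bracket is $z'_k z'_l=z'_{k+l}$, so this is just associativity of addition of subscripts. For $\tI$ the only cases requiring care are those containing $\gth$; for instance both $[[\gth,\gth]_\tI,\gth]_\tI$ and $[\gth,[\gth,\gth]_\tI]_\tI$ reduce to $z_3-z_2+\gth$ once one substitutes $[\gth,\gth]_\tI=z_2-\gth$ and $[\gth,z_k]_\tI=z_{k+1}$, and the remaining mixed cases are equally short. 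I emphasise that the two auxiliary products also named $*_\II$ inside Definitions~\ref{defn:StuffleIII} and \ref{defn:StuffleIV} are quasi-shuffles of precisely this additive kind (with $\gth$ behaving as the index $1$ in the $\tIV$ version), so they fall under this first group and may be used henceforth as known commutative, associative products.

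For $\gt=\III$ and $\gt=\tIV$ the recursions in Definitions~\ref{defn:StuffleIII} and \ref{defn:StuffleIV} are \emph{not} quasi-shuffles: the inner products are the already-settled $*_\II$ rather than $*_\gt$, and a shift operator $\sif_-$ or $\sif_+$ from \eqref{equ:sifOperator} is applied before each recursive call. Here I would argue by transport of structure. Let $\Phi:\fA_\III^1\to\fA_\III$ be the $\Q$-linear map into the $*_\II$-algebra that applies $\sif_-$ to the leading letter and fixes the tail, so $\Phi(z'_k\bfw)=z_k\bfw-z_{k-1}\bfw$, together with the analogous map built from $\sif_+$ for type $\tIV$. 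Such a $\Phi$ is injective, because for each fixed tail the assignment $z'_k\mapsto z_k-z_{k-1}$ is injective on finitely supported combinations. The claim I would then prove is that $\Phi$ is multiplicative, $\Phi(\bfu*_\gt\bfv)=\Phi(\bfu)*_\II\Phi(\bfv)$; granting this, $(\fA_\gt^1,*_\gt)$ is isomorphic to a subalgebra of the commutative, associative $(\fA_\III,*_\II)$ (respectively $(\fA_\tIV,*_\II)$) and so inherits both properties. Commutativity, in any case, is immediate from the manifest symmetry of the defining expressions together with commutativity of $*_\II$, the diagonal term $(z'_{k+l}-z'_{k+l-1})(\bfu*_\II\bfv)$ and its $\tIV$ analogues being symmetric.

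The main obstacle is the multiplicativity of $\Phi$, i.e.\ the single identity $\Phi(\bfu*_\gt\bfv)=\Phi(\bfu)*_\II\Phi(\bfv)$; once it is in hand, associativity for $\III$ and $\tIV$ costs nothing. The heart of the matter is a compatibility law describing how $\sif_\pm$ threads through one quasi-shuffle step: when a term such as $z'_k(\bfu*_\II\sif_-(z'_l\bfv))$ is formed, the leading $z'_k$ is exactly what $\Phi$ turns into $z_k-z_{k-1}$, and the two copies of $\sif_-$ must recombine to reproduce a genuine $*_\II$-product of the $\Phi$-images. This balance is readily confirmed in the lowest-depth case: $z'_k*_\III z'_l$ expands to $z'_kz_l-z'_kz_{l-1}+z'_lz_k-z'_lz_{k-1}+z'_{k+l}-z'_{k+l-1}$, and applying $\Phi$ yields exactly $(z_k-z_{k-1})*_\II(z_l-z_{l-1})=\Phi(z'_k)*_\II\Phi(z'_l)$. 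The general case I would settle by induction on tail length, using the associativity and commutativity of $*_\II$ from the first group. Absent this compatibility, the asymmetry between the outer $*_\gt$ and the inner $*_\II$ would obstruct any direct term-matching, which is why I regard it as the crux of the proof.
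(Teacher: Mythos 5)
Your handling of types $\tI$ and $\II$ is exactly the paper's argument made explicit: the paper's entire proof is the one-line reduction to commutativity and associativity of the brackets $[-,-]_\gt$, i.e.\ Hoffman's quasi-shuffle theorem in the cases where the product genuinely is a quasi-shuffle, and your bracket verifications (including $[[\gth,\gth]_\tI,\gth]_\tI=[\gth,[\gth,\gth]_\tI]_\tI=z_3-z_2+\gth$) are correct. For type $\III$ your route is genuinely different from, and more rigorous than, the paper's one-liner: transporting the structure through $\Phi=\sif_-$ into the honest quasi-shuffle algebra $(\fA_\III,*_\II)$ is exactly the right mechanism, your depth-one check is correct, and in fact no induction on tail length is needed at all, since $*_\III$ is defined in a single step from $*_\II$; one application of the quasi-shuffle recursion to $(z_k\bfu-z_{k-1}\bfu)*_\II(z_l\bfv-z_{l-1}\bfv)$ already reproduces $\Phi(z'_k\bfu*_\III z'_l\bfv)$ verbatim.

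The gap is in type $\tIV$, and it sits precisely in the part you wave at with ``$\gth$ behaving as the index $1$'': you never define $\Phi$ on words beginning with $\gth$, and never check the two product rules of Definition~\ref{defn:StuffleIV} that involve $\gth$. These are not routine. With the natural reading $\Phi(z_k\bfw)=(z_k+z_{k-1})\bfw$, $\Phi(\gth\bfw)=\gth\bfw$ (natural because $\sif_+$ does not shift $\gth$), the mixed rule is compatible with $\Phi$, but the $\gth$--$\gth$ rule is not:
\begin{equation*}
\Phi(\gth\bfu*_\tIV\gth\bfv)-\Phi(\gth\bfu)*_\II\Phi(\gth\bfv)=(z_1-\gth)(\bfu*_\II\bfv),
\end{equation*}
since $\Phi$ turns the bracket term $(z_2-\gth)(\bfu*_\II\bfv)$ into $(z_2+z_1-\gth)(\bfu*_\II\bfv)$, whereas the inner bracket contributes only $[\gth,\gth]_\II(\bfu*_\II\bfv)=z_2(\bfu*_\II\bfv)$. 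Moreover this is not a defect of your particular choice of $\Phi$: if one insists that $\gth$ and $z_1$ are distinct letters in non-leading positions, then associativity of $*_\tIV$ itself fails formally, e.g.
\begin{equation*}
z_2*_\tIV(\gth*_\tIV\gth)-(z_2*_\tIV\gth)*_\tIV\gth=z_2z_1-z_2\gth,
\end{equation*}
a nonzero element which the realization cannot detect because $M^\tIV_k(z_1)=M^\tIV_k(\gth)=q^k/(1-q^k)$. So the type $\tIV$ case requires the convention -- implicitly encoded in the inner brackets $[z_r,\gth]_\II=z_{r+1}$, $[\gth,\gth]_\II=z_2$, and in Definition~\ref{defn:q-AlgebrafAgth}, where $z'_1=a^0\gth=\gth$ -- that a $\gth$ occurring after the first position is the same letter as the index-one letter; with that convention one must take $\Phi(\gth\bfw)=z_1\bfw$ (not $\gth\bfw$), after which all three cases of multiplicativity check out and injectivity of $\Phi$ survives. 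As written, your proof omits exactly this verification, which is where the whole difficulty of type $\tIV$ lies, so the argument is incomplete there; types $\tI$, $\II$, $\III$ are fine.
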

\begin{proof}
This follows from the fact that the product $[-,-]_\gt$ are all
commutative and associative which can be verified easily.
\end{proof}

We now turn to the shuffle algebra which is an analog of the corresponding algebra for MZVs
reflecting the properties of their representations using iterated integrals.
\begin{defn}\label{defn:q-AlgebrafApi}
Let $X_\pi=\{\pi,\gd,y\}$ be an alphabet
and $X_\pi^*$
be the set of words generated by $X_\pi$.
Define $\fA_\pi=\Q\langle \pi,\gd,y\rangle$ to be the noncommutative
polynomial $\Q$-algebra of words of $X_\pi^*$.
We may embed $\fA_\gth$ defined by Definition~\ref{defn:q-AlgebrafAgth}
as a subalgebra of $\fA_\pi$ in two different ways: put $\rho=\pi-\myone$ and let
\begin{align*}
(A)&\quad  a:=\pi,\quad  a^{-1}:=\gd,\, \quad b:=\pi y,\quad  \gth=\rho y\quad \Longrightarrow \quad   \gam:=y,\\
(B)&\quad  a:=\rho,\quad  a^{-1}:=-,  \quad b:=\pi y,\quad  \gth=\rho y\quad \Longrightarrow \quad   \gam:=y .
\end{align*}
We denote the image of the embedding by $\fA_\gth^{(A)}$ and $\fA_\gth^{(B)}$, respectively.
The dash $-$ for the image of $a^{-1}$ in (B) means it does not matter what image we choose
since $a^{-1}$ only appears when we consider type \III $q$-MZVs using (A). We will use
embedding (B) for the other three types for which $a^{-1}$ will not be utilized essentially
because of convergence issues.
\end{defn}

\section{{\lowercase {$q$}}-stuffle relations}\label{sec:qStuffleVersion}
First we define the $\Q$-linear realization maps
$\frakz_{q}:\fA_\gt^0\to\CC$ ($\gt=\tI,\II$) by $\frakz_{q}[\myone]=1$ and
\begin{equation*}
\frakz_{q} [y^\gt_{1}\dots y^\gt_{d}]
:=\sum_{k_{1}>\cdots>k_d>0} M^\gt_{k_{1}}(y^\gt_1)\dots M^\gt_{k_{d}}(y^\gt_d),
\end{equation*}
where $y^\gt_{1}\dots y^\gt_{d}\in\fA_\gt^0$ and
the $\Q$-linear maps
\begin{equation*}
M^\tI_k(\gth):=\frac{q^k}{(1-q^k)},\quad
M^\tI_k(z_{s}):=\frac{q^{(s-1)k}}{(1-q^k)^s},\quad
M^\II_k(z'_{s}):=\frac{q^{sk}}{(1-q^k)^s}.
\end{equation*}
Note that $M^\tI_k(\gam)=M^\tI_k(z_{1}-\gth)=1.$ For example, we have
\begin{equation*}
    \frakz_q[z_2z_5\gam^2 z_1]= \frakz_{q}^{(1,4,0,0,0)}[2,5,0,0,1], \quad
 \frakz_q[\gth z_7 \gth z_4]= \frakz_{q}^{(1,6,1,3)}[1,7,1,4],
\end{equation*}
which are not $q$-MZVs of type I.

For type $\gt=\III$ or $\tIV$, we similarly define the $\Q$-linear realization maps
$\frakz_q:\fA_\gt^0\to\CC$ by $\frakz_q[\myone]=1$ and
\begin{equation*}
\frakz_q[y^\gt_{1}\dots y^\gt_{d}]
:=\sum_{k_{1}>\cdots>k_d>0} M^{1,\gt}_{k_{1}}(y^\gt_1)M^\gt_{k_{2}}(y^\gt_2)\dots M^\gt_{k_{d}}(y^\gt_d),
\end{equation*}
where $y^\gt_{1}\dots y^\gt_{d}\in\fA_\gt^0$ and
the $\Q$-linear maps
\begin{align*}
M^{1,\III}_k(z'_{s}):=&\, \frac{q^k}{(1-q^k)^s},\quad  M^\III_k(z_{s}):=\frac{1}{(1-q^k)^s}, \\
M^\tIV_k(\gth)=M^{1,\tIV}_k(\gth):=&\, \frac{q^k}{(1-q^k)},\quad
M^{1,\tIV}_k(z_{s}):=\frac{q^{k(s-1)}}{(1-q^k)^s},\quad M^\tIV_k(z_{s}):=\frac{q^{sk}}{(1-q^k)^s}.
\end{align*}

The following theorem is parallel to \cite[Proposition 9]{CEM2013}
and includes \cite[Theorem 1]{Takeyama2013}.
\begin{thm}\label{thm:stuffleForAll}
Let $\gt=\tI$, \II, \III or $\tIV$. For any $\bfu_\gt,\bfv_\gt\in\fA_\gt^0$ we have
\begin{align}\label{equ:stuffleForAll}
\frakz_{q}[\bfu_\gt *_\gt\bfv_\gt]=&\, \frakz_{q}[\bfu_\gt]\frakz_{q}[\bfv_\gt].
\end{align}
\end{thm}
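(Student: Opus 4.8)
The plan is to prove \eqref{equ:stuffleForAll} by induction on the total number of letters of $\bfu_\gt$ and $\bfv_\gt$, the base case (one factor equal to $\myone$) being immediate from $\frakz_q[\myone]=1$. Everything hinges on a single \emph{same-index multiplicativity} identity: for the relevant letters $\ga,\gb$ of type $\gt$,
\begin{equation*}
M^\gt_k(\ga)\,M^\gt_k(\gb)=M^\gt_k\big([\ga,\gb]_\gt\big)\qquad(k\ge 1).
\end{equation*}
I would verify this case by case against \eqref{equ:squareBkRels} and Definitions~\ref{defn:StuffleIII}--\ref{defn:StuffleIV}. For instance, for type $\tI$ one has $M^\tI_k(z_a)M^\tI_k(z_b)=q^{(a+b-2)k}/(1-q^k)^{a+b}$, which after clearing denominators is exactly $M^\tI_k(z_{a+b})+M^\tI_k(z_{a+b-1})$, matching $[z_a,z_b]_\tI=z_{a+b}+z_{a+b-1}$; the $\gth$-brackets check the same way, and for type \II the identity is simply $M^\II_k(z'_a)M^\II_k(z'_b)=M^\II_k(z'_{a+b})$.

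For $\gt=\tI$ and \II, where a single map $M^\gt_k$ is used in every position and all the series converge by admissibility, the inductive step is then routine. Writing $\bfu_\gt=\ga\bfu'$ and $\bfv_\gt=\gb\bfv'$ and expanding $\frakz_q[\bfu_\gt]\,\frakz_q[\bfv_\gt]$ as a product of nested sums, I would split the double sum over the two leading indices $k_1$ (attached to $\ga$) and $l_1$ (attached to $\gb$) into the regions $k_1>l_1$, $k_1<l_1$, and $k_1=l_1$. Factoring out $M^\gt_{k_1}(\ga)$, $M^\gt_{l_1}(\gb)$, and $M^\gt_{k_1}(\ga)M^\gt_{k_1}(\gb)=M^\gt_{k_1}([\ga,\gb]_\gt)$ respectively, and applying the induction hypothesis to the surviving tails, these three regions yield exactly $\frakz_q[\ga(\bfu'*_\gt\gb\bfv')]$, $\frakz_q[\gb(\ga\bfu'*_\gt\bfv')]$, and $\frakz_q[[\ga,\gb]_\gt(\bfu'*_\gt\bfv')]$, whose sum is $\frakz_q[\bfu_\gt*_\gt\bfv_\gt]$ by the recursion \eqref{equ:qStuffle1WordForm}.

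The genuinely new point is types \III and $\tIV$, where the leading map $M^{1,\gt}_k$ differs from the generic $M^\gt_k$; this is precisely the discrepancy that the shifting operators $\sif_-,\sif_+$ are built to absorb. The auxiliary identity I would record is
\begin{equation*}
M^{1,\III}_k=M^\III_k\circ\sif_-,\qquad M^{1,\tIV}_k=M^\tIV_k\circ\sif_+,
\end{equation*}
which for type \III is the telescoping $M^\III_k(z_n)-M^\III_k(z_{n-1})=q^k/(1-q^k)^n=M^{1,\III}_k(z'_n)$, and for type $\tIV$ the analogous $M^\tIV_k(z_n)+M^\tIV_k(z_{n-1})=q^{(n-1)k}/(1-q^k)^n=M^{1,\tIV}_k(z_n)$. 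Introducing the ``all-generic'' realization $\widetilde{\frakz}_q$ obtained by using $M^\gt_k$ in \emph{every} position, this identity upgrades to $\frakz_q[z'_k\bfw]=\widetilde{\frakz}_q[\sif_-(z'_k\bfw)]$ for type \III (with the $\sif_+$-analog for $\tIV$, understood to fix $\gth$-initial words since $M^{1,\tIV}_k(\gth)=M^\tIV_k(\gth)$), because the tails already agree. Since $\widetilde{\frakz}_q$ uses one map throughout, it obeys the type~\II stuffle relation by the previous paragraph; combining this with the displayed reduction reduces the type~\III identity to the single algebraic \emph{intertwining} statement $\sif_-(\bfu*_\III\bfv)=\sif_-(\bfu)*_\II\sif_-(\bfv)$, and likewise type $\tIV$ to $\sif_+(\bfu*_\tIV\bfv)=\sif_+(\bfu)*_\II\sif_+(\bfv)$.

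I expect this intertwining to be the main obstacle. One must confirm that $\sif_\pm$ carries the modified products $*_\III,*_\tIV$ to the plain product $*_\II$ on the generic alphabet, which amounts to checking that the three defining terms in Definitions~\ref{defn:StuffleIII}--\ref{defn:StuffleIV}, after applying $\sif_\pm$ to their common leading letter, reassemble into the $*_\II$-expansion of the shifted factors; here one has to track the shifted subscripts carefully (including boundary letters such as $z_0,z'_0$ and the $z$-versus-$z'$ encoding of non-leading letters) so that no term is lost or double-counted. Once this bookkeeping is settled, the reduction to the already-established type~\II identity for $\widetilde{\frakz}_q$ finishes the proof uniformly across all four types.
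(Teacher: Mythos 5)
Your same-index identities are all correct, and your scheme is at bottom a repackaging of the paper's own proof: the paper simply lists the same-index products of the maps (e.g.\ $M^{1,\III}_m(z'_k)M^\III_m(z_l)=M^\III_m(z_{k+l}-z_{k+l-1})$ and $M^{1,\III}_m(z'_k)M^{1,\III}_m(z'_l)=M^{1,\III}_m(z'_{k+l}-z'_{k+l-1})$, which encode exactly your conversions $M^{1,\III}_k=M^\III_k\circ\sif_-$, $M^{1,\tIV}_k=M^\tIV_k\circ\sif_+$ combined with generic multiplicativity) and then invokes the induction of Takeyama's Theorem~1. However, your pivotal reduction for type \III rests on an object that does not exist: since $M^\III_k(z_s)=(1-q^k)^{-s}\to 1$ as $k\to\infty$, the series defining the ``all-generic'' $\widetilde{\frakz}_q[\bfw]$ diverges for \emph{every} word $\bfw$ --- the factor $q^{k_1}$ supplied by $M^{1,\III}$ is precisely what makes type \III values converge. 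Hence the claim that ``$\widetilde{\frakz}_q$ obeys the type \II stuffle relation by the previous paragraph'' is not meaningful, and that paragraph's argument (splitting convergent nested sums) cannot prove it: only the grouped combinations $\widetilde{\frakz}_q[\sif_-(\cdot)]$ converge, and they cannot be separated into their constituent words. The repair is standard but must be said: run the whole induction on the truncated finite sums $\widetilde{\frakz}_q^{<N}[\bfw]:=\sum_{N>k_1>\cdots>k_d>0}\prod_j M_{k_j}$, for which the quasi-shuffle identity is an identity of finite sums valid for all words, and let $N\to\infty$ only at the end on admissible combinations. (The same device is needed to make your type $\tI$, \II step precise as well: the ``surviving tails'' there are truncated sums, and tails of admissible words need not be admissible --- e.g.\ $\frakz_q[z_1]$ diverges for type $\tI$.)

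Second, your ``single algebraic intertwining statement'' for type $\tIV$ is false as a formal identity in the word algebra, not merely delicate bookkeeping: in the $\gth$--$\gth$ case, with your convention that $\sif_+$ fixes $\gth$-initial words,
\begin{equation*}
\sif_+\big(\gth\bfu*_\tIV\gth\bfv\big)-\sif_+(\gth\bfu)*_\II\sif_+(\gth\bfv)
=(z_1-\gth)\,(\bfu*_\II\bfv)\neq 0,
\end{equation*}
because $\sif_+$ sends the bracket $z_2-\gth$ to $z_2+z_1-\gth$ while $[\gth,\gth]_\II=z_2$. The discrepancy is annihilated only after applying the generic realization, since $M^\tIV_k(z_1)=M^\tIV_k(\gth)=q^k/(1-q^k)$; so the intertwining must be stated modulo the identification $z_1\equiv\gth$ (equivalently, modulo the kernel of the realization), or directly at the level of truncated realizations. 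For type \III, by contrast, your intertwining $\sif_-(\bfu*_\III\bfv)=\sif_-(\bfu)*_\II\sif_-(\bfv)$ is a correct formal identity (it is essentially the reason $*_\III$ is defined as it is). With these two repairs --- truncation, and the corrected form of the $\tIV$ intertwining --- your argument closes and coincides in substance with the paper's.
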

\begin{proof}
Since type $\tI$ case is just \cite[Theorem 1]{Takeyama2013},
we only need to consider the other three types.
The proof is basically the same as that of \cite[Theorem 1]{Takeyama2013}.
In fact, it suffices to observe that
\begin{alignat*}{3}
&\, M^\II_m(z'_{k})M^\II_m(z'_{l})=M^\II_m(z'_{k+l}),\quad & &
M^{1,\III}_m(z'_{k})M^\III_m(z_{l})=M^\III_m(z_{k+l}-z_{k+l-1}),\\
&\, M^\III_m(z_{k})M^\III_m(z_{l})=M^\III_m(z_{k+l}),\quad& &
M^{1,\III}_m(z'_{k})M^{1,\III}_m(z'_{l})=M^{1,\III}_m(z'_{k+l}-z'_{k+l-1}), \\
&\, M^\tIV_m(z_{k})M^\tIV_m(z_{l})=M^\tIV_m(z_{k+l}),\quad& &
M^{1,\tIV}_m(z_{k})M^\tIV_m(z_{l})=M^\tIV_m(z_{k+l}+z_{k+l-1}),  \\
&\, M^\tIV_m(\gth)M^\tIV_m(z_{k})=M^\tIV(z_{k+1}), \quad& &
M^{1,\tIV}_m(z_{k})M^{1,\tIV}_m(z_{l})=M^{1,\tIV}_m(z_{k+l}+z_{k+l-1}),\\
&\, M^{1,\tIV}_m(\gth)M^\tIV_m(z_{k})=M^\tIV(z_{k+1}),\quad& &
 M^\tIV_m(\gth)M^\tIV_m(\gth)= M^{1,\tIV}_m(\gth)M^\tIV_m(\gth)=M^\tIV_m(z_2), \\
&\, M^{1,\tIV}_m(\gth)M^{1,\tIV}_m(z_{k})=M^{1,\tIV}_m(z_{k+l}),\quad& &
M^{1,\tIV}_m(\gth)M^{1,\tIV}_m(\gth)=M^{1,\tIV}_m(z_2-\gth),
\end{alignat*}
for all $k, l\ge 0,\ m\ge 1$. Of course, we need to assume $k,l\ge 2$ for
$M^{1,\tIV}_m(z_{k})$ and $M^{1,\tIV}_m(z_{l})$.
\end{proof}

\section{Iterated Jackson's {\lowercase{$q$}}-integrals}\label{sec:Jint}
Set
\begin{equation*}
	x_0:=x_0(t)=\frac{1}{t}, \quad x_1:=x_1(t)=\frac{1}{1-t}, \quad \bfy:=\bfy(t)=\frac{t}{1-t}.
\end{equation*}
Recall that for $a=x_0(t) dt$ and $b=x_1(t) dt$, we can express MZVs by Chen's iterated integrals:
\begin{equation*}
\zeta(s_1,\dots,s_d)=\int_0^1 a^{s_1-1}b\cdots a^{s_d-1}b.
\end{equation*}
Replacing the Riemann integrals by the Jackson $q$-integrals \eqref{equ:Jackson} one gets

\begin{thm}\label{thm:qIteratedZetaIIICEM}\emph{(\cite[(29)]{CEM2013})}
For $\bfs=(s_1,\dots,s_d)\in\N^d$ set $w=|\bfs|$ and
\begin{equation*}
\tilde\zeta_q^\III[\bfs,t]
:=\bfJ\Big[ c_1 \bfJ\big[ c_2 \cdots \bfJ[c_w] \cdots \big] \Big](t),	
\end{equation*}		
where $c_i=x_1$ if $i \in \{u_1,u_2,\ldots,u_d\}$, $u_j:=s_1+s_2+\cdots+s_j$,
and $c_i=x_0$ otherwise. Or, equivalently,
set $\bfw=\pi^{s_1}y\pi^{s_2} y \dots \pi^{s_d} y$ and
\begin{equation*}
     \tilde\frakz_q^\III[\bfw;t]:=\bfP^{s_1}\big[\bfy\cdots\bfP^{s_d}[\bfy]\cdots \big](t).
\end{equation*}
Then 	
\begin{equation*}
\frakz_q^\III[\bfs]=\tilde\frakz_q^\III[\bfw;q]
\end{equation*}
\end{thm}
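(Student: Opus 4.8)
The plan is to convert the iterated Jackson integral into a nested $\bfP$-expression, expand that expression as an explicit iterated series in $t\Q[\![t,q]\!]$, and then set $t=q$.

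First I would set up the dictionary between $\bfJ$ and $\bfP$. By \eqref{equ:Jackson} we have $\bfJ[f]=(1-q)\bfP[\idf\cdot f]$, and since $\idf\cdot x_0=1$ and $\idf\cdot x_1=\bfy$ as functions, for every $g\in t\Q[\![t,q]\!]$ one gets $\bfJ[x_0\,g]=(1-q)\bfP[g]$ and $\bfJ[x_1\,g]=(1-q)\bfP[\bfy g]$ (and at the innermost level $\bfJ[x_1]=(1-q)\bfP[\bfy]$). The word $c_1\cdots c_w$ consists, for $j=1,\dots,d$, of a run of $s_j-1$ copies of $x_0$ terminated by the single $x_1$ sitting at position $u_j$. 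Peeling the nested $\bfJ$'s from the inside out and applying these two reductions, each letter yields one factor $(1-q)$ and one $\bfP$, while every $x_1$ additionally inserts a multiplication by $\bfy$; the $s_j$ operators of the $j$-th block assemble into $\bfP^{s_j}[\bfy\,\cdot\,]$. This produces $\tilde\zeta_q^\III[\bfs,t]=(1-q)^w\,\tilde\frakz_q^\III[\bfw;t]$, which is precisely the asserted equivalence, the factor $(1-q)^w$ being exactly the normalization converting $\zeta_q$ into the modified $\frakz_q$.

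Next I would expand the nested $\bfP$-expression. The base computation is $\bfP(t^j)=\sum_{k\ge0}q^{kj}t^j=t^j/(1-q^j)$, whence $\bfP^{s}(t^j)=t^j/(1-q^j)^s$ by induction (compare \eqref{equ:ResumIdRonly}). Writing $\bfy=\sum_{j\ge1}t^j$ and setting $F_d:=\bfP^{s_d}[\bfy]$ and $F_i:=\bfP^{s_i}[\bfy F_{i+1}]$, I claim by downward induction on $i$ that
\[
F_i(t)=\sum_{k_i>k_{i+1}>\cdots>k_d>0}\frac{t^{k_i}}{(1-q^{k_i})^{s_i}\cdots(1-q^{k_d})^{s_d}}.
\]
The case $i=d$ is immediate from $\bfP^{s_d}(t^{k_d})=t^{k_d}/(1-q^{k_d})^{s_d}$. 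For the inductive step, multiplying $F_{i+1}$ by $\bfy=\sum_{m\ge1}t^m$ sends each monomial $t^{k_{i+1}}$ to $\sum_{m\ge1}t^{k_{i+1}+m}$; substituting $k_i=k_{i+1}+m$ records exactly the strict inequality $k_i>k_{i+1}$, after which $\bfP^{s_i}$ multiplies $t^{k_i}$ by $(1-q^{k_i})^{-s_i}$ without altering the monomial. All rearrangements are legitimate in $t\Q[\![t,q]\!]$ since the coefficient of each $t^n$ is a finite sum.

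Finally, since $\tilde\frakz_q^\III[\bfw;t]=F_1(t)$, evaluating at $t=q$ replaces $t^{k_1}$ by $q^{k_1}$ and gives
\[
\tilde\frakz_q^\III[\bfw;q]=\sum_{k_1>\cdots>k_d>0}\frac{q^{k_1}}{(1-q^{k_1})^{s_1}\cdots(1-q^{k_d})^{s_d}}=\frakz_q^{(1,0,\dots,0)}[\bfs]=\frakz_q^\III[\bfs],
\]
as required. I expect the only delicate point to be the index bookkeeping in the inductive step, namely verifying that one multiplication by $\bfy$ is exactly what upgrades $k_{i+1}\ge1$ to the strict descent $k_i>k_{i+1}$; by contrast the passage from $\bfJ$ to $\bfP$ and the convergence at $t=q$ (guaranteed by Corollary~\ref{cor:convergenceDomain}(iii)) are routine.
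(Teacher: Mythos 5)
Your proof is correct. Note that the paper offers no proof of this statement at all: it is imported as identity (29) of \cite{CEM2013}, so your argument fills in a proof the paper leaves to the literature. Methodologically, what you do is exactly what the paper does to prove its generalizations, Theorems~\ref{thm:qIteratedZetaGen1} and~\ref{thm:qIteratedZetaGen2}: a depth induction in which one factor of $\bfy$ turns $t^{k_{i+1}}$ into $\sum_{k_i>k_{i+1}}t^{k_i}$ and $\bfP^{s_i}$ then supplies the factor $(1-q^{k_i})^{-s_i}$, i.e.\ the paper's identity $\bfP^m\big(\bfy(t)\cdot t^k\big)=\sum_{\ell>k}t^{\ell}/(1-q^{\ell})^m$. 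The only differences are that the paper's words in those theorems begin with $\bfR$'s (hence are evaluated at $t=1$ rather than $t=q$, which is precisely the modification the author makes to enable the duality relations), and that the $\bfJ$-to-$\bfP$ dictionary with its bookkeeping of the factor $(1-q)^w$---which you establish correctly via $\idf\cdot x_0=1$ and $\idf\cdot x_1=\bfy$---is not spelled out anywhere in the paper and is genuinely your addition; it is needed to justify the word ``equivalently'' in the statement.
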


However, the representation of $\zeta_q^\III[\bfs]$ using $\tilde\frakz_q^\III$
in Theorem~\ref{thm:qIteratedZetaIIICEM}
is not ideal in the sense that one has to
evaluate $t$ at $q$. We would like to use Corollary~\ref{cor:permuteRy} so we need to
set $t=1$. This leads to the idea of replacing the first factor $\bfP^{s_1}$ by $\bfP^{s_1-1}\bfR$ and,
more generally, the following two generalizations.

\begin{thm}\label{thm:qIteratedZetaGen1}
Let $\bfs=(s_1,\dots,s_d)\in\N^d$ and $\bfa=(a_1,\dots,a_d)\in(\Z_{\ge 0})^d$. Put
$w=|\bfs|$ and $\bfw=\bfw^\bfa(\bfs)=\rho^{a_1}\pi^{s_1-a_1} y\dots \rho^{a_d} \pi^{s_d-a_d} y$.
Define
\begin{equation*}
\frakz_q[\bfw^{\bfa}(\bfs);t]
=\bfR^{a_1}\big[\bfP^{s_1-a_1}[\bfy\bfR^{a_2}[\bfP^{s_2-a_2}[\bfy\cdots
    \bfR^{a_d}[\bfP^{s_d-a_d}[\bfy]]\cdots]]]\big](t).
\end{equation*}
If $a_1+\dots+a_j>0$ for all $j=1,\dots,d$, then we have
\begin{equation} \label{equ:JexpressZetaGen1}
 \zeta_q^{\bfa}[\bfs]
=(1-q)^w\frakz_q[\bfw^{\bfa}(\bfs);\, 1], \qquad
\frakz_q^{\bfa}[\bfs]=\frakz_q[\bfw^{\bfa}(\bfs)]:=\frakz_q[\bfw^{\bfa}(\bfs);\, 1].
\end{equation}
\end{thm}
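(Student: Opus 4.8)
The plan is to expand the whole operator expression as a formal power series in $t$ over $\Q[\![q]\!]$, track how each block of operators acts on monomials $t^j$, induct from the innermost factor outward, and only at the very end set $t=1$, where the convergence results of Section~2 are invoked.

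First I would record the action of the relevant operator blocks on a single monomial. Since $\bfP$ and $\bfR$ are $\Q[\![q]\!]$-linear endomorphisms of $t\Q[\![t,q]\!]$ that are simultaneously diagonalized by the monomials $t^j$, formula \eqref{equ:ResumIdRonly} together with the analogous elementary identity $\bfP^\gb(t^j)=t^j/(1-q^j)^\gb$ (proved the same way, by induction from $\bfP(t^j)=t^j/(1-q^j)$) yields, for all $\ga,\gb\ge 0$ and $j\ge 1$,
\[\bfR^\ga\bfP^\gb(t^j)=\frac{q^{\ga j}\,t^j}{(1-q^j)^{\ga+\gb}}.\]
Taking $\ga=a_j$ and $\gb=s_j-a_j$ (so that $\ga+\gb=s_j$) shows that the block $\bfR^{a_j}\bfP^{s_j-a_j}$ sends $t^{k}\mapsto q^{a_jk}\,t^{k}/(1-q^{k})^{s_j}$, which is exactly the $j$-th summand factor of $\frakz_q^{\bfa}[\bfs]$.

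Next I would prove, by downward induction on $j$, a closed form for the partial expressions defined by $F_d(t):=\bfR^{a_d}\bfP^{s_d-a_d}[\bfy](t)$ and $F_j(t):=\bfR^{a_j}\bfP^{s_j-a_j}[\bfy\cdot F_{j+1}](t)$ for $1\le j<d$, so that $F_1=\frakz_q[\bfw^{\bfa}(\bfs);t]$. The claim is
\[F_j(t)=\sum_{k_j>\cdots>k_d>0}\frac{q^{a_jk_j+\cdots+a_dk_d}}{(1-q^{k_j})^{s_j}\cdots(1-q^{k_d})^{s_d}}\,t^{k_j}.\]
The base case $j=d$ is the monomial formula above applied termwise to $\bfy=\sum_{m\ge1}t^m$. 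For the inductive step, multiplying $F_{j+1}$ by $\bfy=\sum_{m\ge1}t^m$ and reindexing via $k_j:=m+k_{j+1}$ creates the strict inequality $k_j>k_{j+1}$ automatically, and applying $\bfR^{a_j}\bfP^{s_j-a_j}$ termwise inserts the factor $q^{a_jk_j}/(1-q^{k_j})^{s_j}$. This index bookkeeping is the only place that needs care, but it is routine.

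Finally I would set $t=1$. The displayed identity for $F_1$ holds as an element of $t\Q[\![t,q]\!]$, so its coefficient of $t^{k_1}$ is the finite inner sum over $k_1>k_2>\cdots>k_d>0$, and summing these over $k_1$ is precisely the $d$-fold series defining $\frakz_q^{\bfa}[\bfs]$. Here the hypothesis $a_1+\cdots+a_j>0$ for all $j$ enters: by Proposition~\ref{prop:fAuxFunction} (equivalently Corollary~\ref{cor:convergenceDomain}) it forces $\Re(a_1+\cdots+a_j)>0$, hence absolute convergence of this series for $|q|<1$, which legitimizes both the evaluation at $t=1$ and the free rearrangement of terms; in particular it forces $a_1\ge1$, supplying the geometric decay in $k_1$. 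This gives $\frakz_q[\bfw^{\bfa}(\bfs);1]=\frakz_q^{\bfa}[\bfs]$, and the first equality of \eqref{equ:JexpressZetaGen1} then follows on multiplying by $(1-q)^w$ via the definition \eqref{equ:auxiliaryDefn} relating $\zeta_q$ to $\frakz_q$. The main obstacle is thus not the algebra but exactly this passage from the formal $t$-series to its value at $t=1$, which is the sole role of the positivity condition; by contrast, the all-$\bfP$ case of Theorem~\ref{thm:qIteratedZetaIIICEM} must be evaluated at $t=q$, and it is precisely the insertion of the operators $\bfR^{a_j}$ that moves the evaluation point to $t=1$ so that Corollary~\ref{cor:permuteRy} becomes applicable.
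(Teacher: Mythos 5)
Your proposal is correct and follows essentially the same route as the paper's own proof: both reduce everything to the action of $\bfR^{a_j}\bfP^{s_j-a_j}$ on monomials $t^k$ (the paper's \eqref{equ:PpowerOntPower}--\eqref{equ:RpowerOny(t)tPower}), both establish the closed-form $t$-series $\frakz_q[\bfw^{\bfa}(\bfs);t]=\sum_{k_1>\dots>k_d>0}t^{k_1}q^{a_1k_1+\cdots+a_dk_d}\prod_j(1-q^{k_j})^{-s_j}$ by the same induction (your downward induction on $j$ from the innermost block is the paper's induction on depth, just reindexed), and both conclude by setting $t=1$. Your treatment of the evaluation at $t=1$ is, if anything, slightly more explicit than the paper's, which simply sets $t=1$ after proving the series identity.
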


\begin{proof} First we observe three important facts: for any $k\ge 1$ we have
\begin{equation*}
     \bfP(t^k)=\sum_{j\ge 0} q^{kj} t^k=\frac{t^k}{1-q^k},\quad
     \bfD(t^k)=t^k(1-q^k),
     \quad\text{and}\quad
     \bfR(t^k)=\sum_{j\ge 1} q^{kj} t^k=\frac{q^k t^k}{1-q^k}
\end{equation*}
by the definition of the two summation operators and the difference operator.
Repeatedly applying this we get
\begin{alignat}{3}\label{equ:PpowerOntPower}
     \bfP^m(t^k)=&\, \sum_{j\ge 0} q^{kj} t^k=\frac{t^k}{(1-q^k)^m}, \qquad &&\forall m\in \Z, \\
     \bfR^m(t^k)=&\, \sum_{j\ge 1} q^{kj} t^k=\frac{q^{mk} t^k}{(1-q^k)^m}  \qquad &&\forall m\in \Z_{\ge 0}.
     \label{equ:RpowerOntPower}
\end{alignat}
Thus
\begin{equation*}
     \bfP\big(\bfy(t)\cdot t^k \big)
=\sum_{j\ge 0} \frac{q^{j(k+1)} t^{k+1}}{1-q^{j}t}
=\sum_{j\ge 0}\sum_{\ell\ge 0}  q^{j(k+\ell+1)} t^{k+\ell+1}
=\sum_{\ell>k}  \frac{t^{\ell}}{1-q^\ell}.
\end{equation*}
Similarly, we have
\begin{equation*}
     \bfD\big(\bfy(t)\cdot t^k \big)
=  \frac{t^{k+1} } {1-t}-\frac{q^{k+1} t^{k+1}} {1-qt}
=\sum_{\ell\ge 0} (1- q^{k+\ell+1}) t^{k+\ell+1}
=\sum_{\ell>k} (1-q^\ell) t^\ell,
\end{equation*}
and
\begin{equation*}
     \bfR\big(\bfy(t)\cdot t^k \big)
=\sum_{j\ge 1} \frac{q^{j(k+1)} t^{k+1}}{1-q^{j}t}
=\sum_{j\ge 1}\sum_{\ell\ge 0}  q^{j(k+\ell+1)} t^{k+\ell+1}
=\sum_{\ell>k}  \frac{q^\ell t^{\ell}}{1-q^\ell}.
\end{equation*}
It follows from \eqref{equ:PpowerOntPower} and \eqref{equ:RpowerOntPower} that
\begin{alignat}{3}\label{equ:PDpowerOny(t)tPower}
\bfP^m\big(\bfy(t)\cdot t^k \big)=&\, \sum_{\ell>k}  \frac{t^{\ell}}{(1-q^\ell)^m}
     \qquad &&\forall m\in \Z, \\
\bfR^m\big(\bfy(t)\cdot t^k \big)=&\,\sum_{\ell>k}  \frac{q^{m\ell} t^{\ell}}{(1-q^\ell)^m}
     \qquad &&\forall m\in \Z_{\ge 0}.
     \label{equ:RpowerOny(t)tPower}
\end{alignat}

We now prove by induction on the the depth $d$ that for all $\bfs=(s_1,\dots,s_d)\in\N^d$,
\begin{equation}\label{equ:InductionThmZetaGen1}
\frakz_q[\bfw^{\bfa}(\bfs);t]=\sum_{k_1>\dots>k_d> 0} \frac{t^{k_1}  q^{k_1a_1}\dots q^{k_da_d}}
            {(1-q^{k_1})^{s_1}\cdots (1-q^{k_d})^{s_d}}.	
\end{equation}
When $d=1$, i.e., $\bfs=s$, then by \eqref{equ:PDpowerOny(t)tPower} followed by \eqref{equ:RpowerOntPower}
\begin{equation*}
\frakz_q[\bfw^{a}(s);t]= \bfR^a\bfP^{s-a}[\bfy](t)=\sum_{k>0}\frac{\bfR^a\big(t^k\big)}{(1-q^k)^{s-a}}
 =\sum_{k>0}\frac{q^{ak} t^k}{(1-q^k)^s}
 =\frakz_q^{(a)}[s;t].
\end{equation*}
This proof works even when $s=a$ because of \eqref{equ:RpowerOny(t)tPower} (take $k=0$ and $m=a$ there).

In general, assuming $d\ge 2$ and \eqref{equ:InductionThmZetaGen1} is true for smaller depths.
Then by the inductive assumption
\begin{align*}
\frakz_q[\bfw^{\bfa}(\bfs);t]=&\, \bfR^{a_1}\bfP^{s_1-a_1}\big[\bfy \bfR^{a_2}\bfP^{s_2-a_2}[\bfy \cdots \bfR^{a_d}\bfP^{s_d-a_d}[\bfy ]\cdots]\big](t) \\
=&\,  \sum_{k_2>\dots>k_d> 0}
\frac{ \bfR^{a_1}\bfP^{s_1-a_1}\big(\bfy(t)\cdot t^{k_2} \big) q^{k_2 a_2} \dots q^{k_da_d}}
    {(1-q^{k_2})^{s_2}\cdots (1-q^{k_d})^{s_d}} \\
=&\, \sum_{k_1>\dots>k_d> 0}
\frac{ \bfR^{a_1}\big(t^{k_1}\big) q^{k_2 a_2} \dots q^{k_da_d}}
    {(1-q^{k_1})^{s_1-a_1}(1-q^{k_2})^{s_2}\cdots (1-q^{k_d})^{s_d}}
        \quad (\text{by \eqref{equ:PDpowerOny(t)tPower})}\\
=&\, \sum_{k_1>\dots>k_d> 0} \frac{t^{k_1} q^{k_1 a_1} \dots q^{k_da_d}}
 {(1-q^{k_1})^{s_1}\cdots (1-q^{k_d})^{s_d}} 	 	
\end{align*} 
by \eqref{equ:RpowerOntPower}. Again, if $s_1=a_1$ the proof is still valid.
This completes the proof of \eqref{equ:InductionThmZetaGen1}. Setting $t=1$
we arrive at \eqref{equ:JexpressZetaGen1}.
\end{proof}

By change of variables $a_j\to s_j-a_j$ for all $j=1,\dots,d$ we immediately obtain
the next result.
\begin{thm}\label{thm:qIteratedZetaGen2}
For $\bfs=(s_1,\dots,s_d)\in\N^d$ and $\bfa=(a_1,\dots,a_d)\in(\Z_{\ge 0})^d$, we set
$\bfs-\bfa=(s_1-a_1,\dots,s_d-a_d)$, $w=|\bfs|$ and
$\bfw^{\bfs-\bfa}(\bfs)=\rho^{s_1-a_1} \pi^{a_1} y  \dots \rho^{s_d-a_d} \pi^{a_d} y$.
Define
\begin{equation*}
\frakz_q[\bfw^{\bfs-\bfa}(\bfs);t]
=\bfR^{s_1-a_1}\big[\bfP^{a_1}[\bfy\bfR^{s_2-a_2}[\bfP^{a_2}[\bfy\cdots
    \bfR^{s_d-a_d}[\bfP^{a_d}[\bfy]]\cdots]]]\big](t).
\end{equation*}
If $s_1+\dots+s_j>a_1+\dots+a_j$ for all $j=1,\dots,d$, then we have
\begin{equation} \label{equ:JexpressZetaGen2}
 \zeta_q^{\bfs-\bfa}[\bfs]
=(1-q)^w\frakz_q[\bfw^{\bfs-\bfa}(\bfs);\, 1], \qquad
\frakz_q^{\bfs-\bfa}[\bfs]=\frakz_q[\bfw^{\bfs-\bfa}(\bfs)]:=\frakz_q[\bfw^{\bfs-\bfa}(\bfs);\, 1].
\end{equation}
\end{thm}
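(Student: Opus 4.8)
The plan is to obtain Theorem~\ref{thm:qIteratedZetaGen2} as a direct specialization of Theorem~\ref{thm:qIteratedZetaGen1}, by applying the latter to the \emph{complementary} auxiliary vector. Concretely, given $\bfs\in\N^d$ and $\bfa\in(\Z_{\ge 0})^d$ with $a_j\le s_j$ for each $j$, I would set $\bfb=(b_1,\dots,b_d):=\bfs-\bfa=(s_1-a_1,\dots,s_d-a_d)$, so that $\bfb\in(\Z_{\ge 0})^d$ as well, and then invoke Theorem~\ref{thm:qIteratedZetaGen1} with its auxiliary vector taken to be $\bfb$ in place of $\bfa$. The proof then amounts to checking that, under $\bfa\mapsto\bfb=\bfs-\bfa$, each ingredient of Theorem~\ref{thm:qIteratedZetaGen1} turns into the corresponding ingredient of Theorem~\ref{thm:qIteratedZetaGen2}.

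First I would match the words and the operator strings. By the definition in Theorem~\ref{thm:qIteratedZetaGen1}, the word attached to $\bfb$ is
\begin{equation*}
\bfw^{\bfb}(\bfs)
=\rho^{b_1}\pi^{s_1-b_1}y\cdots\rho^{b_d}\pi^{s_d-b_d}y
=\rho^{s_1-a_1}\pi^{a_1}y\cdots\rho^{s_d-a_d}\pi^{a_d}y
=\bfw^{\bfs-\bfa}(\bfs),
\end{equation*}
since $s_j-b_j=a_j$; correspondingly the operator string $\bfR^{b_1}\bfP^{s_1-b_1}\cdots\bfR^{b_d}\bfP^{s_d-b_d}$ becomes exactly $\bfR^{s_1-a_1}\bfP^{a_1}\cdots\bfR^{s_d-a_d}\bfP^{a_d}$, which is precisely the operator defining $\frakz_q[\bfw^{\bfs-\bfa}(\bfs);t]$. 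Next I would match the hypotheses: the convergence condition of Theorem~\ref{thm:qIteratedZetaGen1}, namely $b_1+\dots+b_j>0$ for all $j$, reads $(s_1-a_1)+\dots+(s_j-a_j)>0$, i.e.\ $s_1+\dots+s_j>a_1+\dots+a_j$, which is exactly the hypothesis of Theorem~\ref{thm:qIteratedZetaGen2}. Finally, the conclusion \eqref{equ:JexpressZetaGen1} applied to $\bfb$ gives $\zeta_q^{\bfb}[\bfs]=(1-q)^w\frakz_q[\bfw^{\bfb}(\bfs);1]$ together with $\frakz_q^{\bfb}[\bfs]=\frakz_q[\bfw^{\bfb}(\bfs);1]$; substituting $\bfb=\bfs-\bfa$ turns these verbatim into \eqref{equ:JexpressZetaGen2}. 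This closes the argument.

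There is essentially no analytic content to redo here: the depth induction and the key evaluations \eqref{equ:PpowerOntPower}--\eqref{equ:RpowerOny(t)tPower} are already packaged inside Theorem~\ref{thm:qIteratedZetaGen1}, so the statement follows without any fresh computation. The only point that I would make explicit --- and the sole subtlety worth flagging --- is the admissible range of the substitution. For $\bfw^{\bfs-\bfa}(\bfs)$ to be a genuine word and for each factor $\bfR^{s_j-a_j}$ to be a nonnegative power of $\bfR$ (to which the formula \eqref{equ:RpowerOny(t)tPower}, valid only for exponents in $\Z_{\ge 0}$, applies), one needs $s_j-a_j\ge 0$, i.e.\ $0\le a_j\le s_j$; this is exactly the requirement $\bfb\in(\Z_{\ge 0})^d$ demanded by Theorem~\ref{thm:qIteratedZetaGen1}. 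Under this (implicit) constraint the map $\bfa\mapsto\bfs-\bfa$ is an involution on the index set $\{\,0\le a_j\le s_j\,\}$, so Theorems~\ref{thm:qIteratedZetaGen1} and~\ref{thm:qIteratedZetaGen2} are equivalent reformulations of one another; I would simply record this equivalence as the proof.
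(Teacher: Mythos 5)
Your proposal is correct and coincides with the paper's own argument: the paper derives Theorem~\ref{thm:qIteratedZetaGen2} from Theorem~\ref{thm:qIteratedZetaGen1} precisely by the change of variables $a_j\mapsto s_j-a_j$, which is your substitution $\bfb=\bfs-\bfa$. Your explicit matching of the words, operator strings, convergence hypotheses, and the flag that $0\le a_j\le s_j$ is implicitly required (so that the $\bfR$-exponents stay nonnegative) simply spells out details the paper leaves to the reader.
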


By specializing the proceeding two theorems to the four types of $q$-MZVs
in Table~\ref{Table:qMZV} we quickly find the following corollary.
For future reference, we will say $\bfw_\gt$ has
the \emph{typical type $\gt$ form} for each type $\gt$.

\begin{cor}\label{cor:qIteratedZetaAll}
For $\bfs=(s_1,\dots,s_d)\in\N^d$, we set
\begin{align*}
\bfw_\I=&\, \rho^{s_1-1} \pi y  \dots \rho^{s_d-1} \pi y
=z_{s_1}\dots z_{s_d}\in \fA_\gth^{(B)}\subset  \fA_\pi y  \qquad (s_1\ge2),\\
\bfw_\II=&\, \rho^{s_1} y\dots\rho^{s_d} y
=z'_{s_1}\dots z'_{s_d}\in \fA_\gth^{(B)}\subset\fA_\pi y  ,\\
\bfw_\III=&\, \pi^{s_1-1}\rho y\pi^{s_2} y \dots \pi^{s_d} y
=z'_{s_1}z_{s_2}\dots z_{s_d}\in \fA_\gth^{(A)}\subset \fA_\pi y , \\
\bfw_\IV=&\, \rho^{s_1-1}\pi y\rho^{s_2} y\dots\rho^{s_d} y
=z_{s_1}z'_{s_2}\dots z'_{s_d}\in \fA_\gth^{(B)}\subset\fA_\pi y  \qquad (s_1\ge2),
\end{align*}
and
\begin{align*}
\frakz_q[\bfw_\I;t]=&\, \bfR^{s_1-1}\big[\bfP[\bfy\bfR^{s_2-1}[\bfP[\bfy\cdots\bfR^{s_d-1}[\bfP[\bfy]]\cdots]]]\big](t), \\
\frakz_q[\bfw_\II;t]=&\, \bfR^{s_1}\big[\bfy\bfR^{s_2}[\bfy\cdots\bfR^{s_d}[\bfy]\cdots]\big](t), \\
\frakz_q[\bfw_\III;t]:=&\, \bfP^{s_1-1}\Big[\bfR\big[\bfy[\bfP^{s_2}[\bfy[\bfP^{s_3}[\bfy \cdots\bfP^{s_d}[\bfy]\cdots ]]]]\big]\Big](t), \\
\frakz_q[\bfw_\IV;t]=&\, \bfR^{s_1-1}\Big[\bfP\big[\bfy\bfR^{s_2}[\bfy\bfR^{s_3}[\bfy\cdots\bfR^{s_d}[\bfy]\cdots]]\big]\Big](t).
\end{align*}
Then for all the types $\gt=\I,$ \II, \III and \IV, we have	
\begin{equation*}
 \zeta_q^\gt[\bfs]
= (1-q)^w\frakz_q[\bfw_\gt;\, 1], \qquad
\frakz_q^\gt[\bfs]=\frakz_q[\bfw_\gt]:=\frakz_q[\bfw_\gt;\, 1].
\end{equation*}
Moreover, similar results hold for type $\tI$ and $\tIV$ $q$-MZVs.
We may replace any of the consecutive strings $\rho^{s_j-1}\pi$
by a single $\rho$ in $\bfw_\tI$ and $\bfw_\tIV$, and replace the
corresponding operator string $\bfP^{s_j-1}\bfR$ by a single $\bfR$.
\end{cor}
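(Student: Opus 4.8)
The plan is to obtain all four identities as immediate specializations of Theorems~\ref{thm:qIteratedZetaGen1} and~\ref{thm:qIteratedZetaGen2}, choosing in each case the auxiliary exponent vector $\bfa$ that reproduces the $\bft$-pattern recorded for that type in Table~\ref{Table:qMZV}. Concretely: type \II comes from Theorem~\ref{thm:qIteratedZetaGen1} with $\bfa=\bfs$ (so the auxiliary variable is $\bfs$ itself) and type \III from the same theorem with $\bfa=(1,0,\dots,0)$; type \I comes from Theorem~\ref{thm:qIteratedZetaGen2} with $\bfa=(1,\dots,1)$ (auxiliary variable $\bfs-\bfa=(s_1-1,\dots,s_d-1)$) and type \IV from the same theorem with $\bfa=(1,0,\dots,0)$ (auxiliary variable $(s_1-1,s_2,\dots,s_d)$). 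In each case the convergence hypothesis of the invoked theorem reduces exactly to the type-admissibility condition of Corollary~\ref{cor:convergenceDomain}: the bound $a_1+\dots+a_j>0$ becomes $s_1+\dots+s_j>0$ for type \II and $1>0$ for type \III, while $s_1+\dots+s_j>a_1+\dots+a_j$ becomes $s_1+\dots+s_j>j$ for type \I and $s_1+\dots+s_j>1$ for type \IV.

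Next I would match the stated operator strings with those produced by the theorems. For types \I, \II and \IV the operator and word expressions coincide on the nose after substituting the chosen $\bfa$; one only reads off, for instance in type \I, that $\bfR^{s_j-a_j}\bfP^{a_j}=\bfR^{s_j-1}\bfP$ and $\rho^{s_j-a_j}\pi^{a_j}=\rho^{s_j-1}\pi$. Type \III is the only case needing a remark: Theorem~\ref{thm:qIteratedZetaGen1} delivers the outermost block in the order $\bfR\bfP^{s_1-1}$, whereas $\frakz_q[\bfw_\III;t]$ is written with $\bfP^{s_1-1}\bfR$; these agree because $\bfP\bfR=\bfR\bfP$ (the last identity of Proposition~\ref{prop:checkRBalg}), and correspondingly the word $\rho\pi^{s_1-1}y$ may be rewritten as $\pi^{s_1-1}\rho y$.

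I would then translate the resulting words into the $z$/$z'$ alphabet using the two embeddings of Definition~\ref{defn:q-AlgebrafApi}. Under embedding (B) one has $z_s=\rho^{s-1}\pi y$ and $z'_s=\rho^{s}y$, which identifies $\bfw^{\bfs-\bfa}(\bfs)$ with $z_{s_1}\cdots z_{s_d}$ (type \I), the word $\rho^{s_1}y\cdots\rho^{s_d}y$ with $z'_{s_1}\cdots z'_{s_d}$ (type \II), and $\rho^{s_1-1}\pi y\,\rho^{s_2}y\cdots\rho^{s_d}y$ with $z_{s_1}z'_{s_2}\cdots z'_{s_d}$ (type \IV); under embedding (A), $z_s=\pi^s y$ and $z'_s=\pi^{s-1}\rho y$ give $\bfw_\III=z'_{s_1}z_{s_2}\cdots z_{s_d}$. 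This confirms both the word formulas and the claimed memberships in $\fA_\gth^{(A)}$ and $\fA_\gth^{(B)}$.

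Finally, for the regularized types $\tI$ and $\tIV$ I would argue that the inductive computation in the proof of Theorem~\ref{thm:qIteratedZetaGen1} is purely operator-theoretic: the identities for $\bfP^m,\bfR^m$ acting on $t^k$ and on $\bfy(t)\,t^k$ hold independently of whether the ambient series converges as a classical $q$-MZV. Thus when some $s_j=1$ the block $\rho^{s_j-1}\pi=\pi$ (operator $\bfP$) may be replaced by $\rho$ (operator $\bfR$), that is, the divergent letter $z_1=b=\pi y$ is replaced by $\gth=\rho y$; the single application $\bfR(\bfy(t)\,t^{k})=\sum_{\ell>k}q^\ell t^\ell/(1-q^\ell)$ from that proof shows the replacement reproduces exactly the convergent realization $M^{\tI}_k(\gth)=q^k/(1-q^k)$ from Section~\ref{sec:qStuffleVersion}, and likewise for $\tIV$. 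The step to get right, rather than a genuine obstacle, is precisely this last bookkeeping: verifying that the $\bfP\to\bfR$ substitution at a divergent slot matches the regularized series defined by the maps $M^{\tI}$ and $M^{\tIV}$, so that the assertion ``similar results hold'' is justified even though the convergence hypotheses of Theorems~\ref{thm:qIteratedZetaGen1}--\ref{thm:qIteratedZetaGen2} fail when $s_1=1$.
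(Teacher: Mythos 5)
Your proposal is correct and follows essentially the same route as the paper, which states the corollary without further proof as a direct specialization of Theorems~\ref{thm:qIteratedZetaGen1} and~\ref{thm:qIteratedZetaGen2}; your choices of $\bfa$ for each type, the reduction of the convergence hypotheses to Corollary~\ref{cor:convergenceDomain}, the use of $\bfP\bfR=\bfR\bfP$ (and $\rho\pi=\pi\rho$) to reorder the leading block in type \III, and the embedding bookkeeping are exactly the details the paper leaves implicit. Your treatment of the $\tI$/$\tIV$ replacement $\rho^{s_j-1}\pi\to\rho$ (equivalently $z_1\to\gth$, $\bfP\to\bfR$) is also sound, and can be seen even more directly as Theorem~\ref{thm:qIteratedZetaGen1} applied with the mixed exponent vector $a_j=1$, $s_j=1$ at the $\gth$-slots.
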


We now apply the above to Okounkov's $q$-MZVs. For any $n\in\N$ we let
$n^-$ and $n^+$ be the two nonnegative integers such that
\begin{equation*}
   \frac{n-1}2\le n^- \le   \frac{n}2 \le n^+\le  \frac{n+1}2.
\end{equation*}
Clearly we have $n^+ +n^-=n$ always, $n^+=n^-$ if $n$ is even, and $n^+=n^-+1$ if $n$ is odd.
We can now define a variation of Okounkov's $q$-MZVs. Let $\bfs\in (\Z_{\ge 2})^d$. Then
\begin{equation*}
\zeta_q^\O[\bfs]
:=\sum_{k_1>\dots>k_d>0} \prod_{j=1}^d \frac{q^{k_j^+s_j}+ q^{k_j^- s_j} }{[k_j]^{s_j}}
=(1-q)^{|\bfs|}\sum_{ k_1>\dots>k_d>0} \prod_{j=1}^d \frac{q^{k_j^+s_j}+ q^{k_j^- s_j} }{(1-q^{k_j})^{s_j}}.
\end{equation*}
Again, its modified form is:
\begin{equation*}
\frakz_q^\O[\bfs]:=\sum_{ k_1>\dots>k_d>0}
        \prod_{j=1}^d \frac{q^{k_j^+s_j}+ q^{k_j^- s_j} }{(1-q^{k_j})^{s_j}}.
\end{equation*}

\begin{rem}
The above variation is equal to Okounkov's original $q$-MZVs up to a suitable 2-power.
More precisely, the power is given by the number of even arguments in $\bfs$.
\end{rem}

\begin{cor}\label{cor:qIteratedZetaOk}
For $\bfs=(s_1,\dots,s_d)\in\N^d$, we set
\begin{align*}
\bfw_\O=&\, (\rho^{s_1^-}\pi^{s_1^+}+\rho^{s_1^+}\pi^{s_1^-}) y  \dots
(\rho^{s_d^-}\pi^{s_d^+}+\rho^{s_d^+}\pi^{s_d^-}) y
=z_{s_1}\dots z_{s_d}\in \fA_\gth^{(B)}\subset  \fA_\pi y
\end{align*}
and
\begin{align*}
\frakz_q[\bfw_\O;t]=&\, (\bfR^{s_1^-}\bfP^{s_1^+}+\bfR^{s_1^+}\bfP^{s_1^-})\big[\bfy
\cdots(\bfR^{s_d^-}\bfP^{s_d^+}+\bfR^{s_d^+}\bfP^{s_d^-})[\bfy]\cdots\big](t) .
\end{align*}
Then we have	
\begin{equation*}
 \zeta_q^\O[\bfs]
= (1-q)^w\frakz_q[\bfw_\O;\, 1], \qquad
\frakz_q^\O[\bfs]=\frakz_q[\bfw_\O]:=\frakz_q[\bfw_\O;\, 1].
\end{equation*}
\end{cor}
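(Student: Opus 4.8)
The plan is to obtain this statement as an immediate consequence of Theorem~\ref{thm:qIteratedZetaGen1}, using only that $\bfP$ and $\bfR$ are $\Q[\![q]\!]$-linear endomorphisms of $t\Q[\![t,q]\!]$. The crucial observation is that each factor $\bfR^{s_j^-}\bfP^{s_j^+}+\bfR^{s_j^+}\bfP^{s_j^-}$ occurring in $\frakz_q[\bfw_\O;t]$ is a sum of two operators, each of the form $\bfR^{a_j}\bfP^{s_j-a_j}$ with $a_j\in\{s_j^-,s_j^+\}$; since $s_j^-+s_j^+=s_j$, in either case the two exponents add up to $s_j$, so each summand fits the pattern required by Theorem~\ref{thm:qIteratedZetaGen1}.

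First I would expand the nested composition defining $\frakz_q[\bfw_\O;t]$ by distributing each of the $d$ two-term factors over its sum. Because $\bfP$ and $\bfR$ are $\Q[\![q]\!]$-linear, this is legitimate and produces exactly $2^d$ summands indexed by tuples $\bfa=(a_1,\dots,a_d)$ with $a_j\in\{s_j^-,s_j^+\}$. For a fixed $\bfa$, the corresponding summand is, letter for letter, the operator $\bfR^{a_1}\bfP^{s_1-a_1}[\bfy\,\bfR^{a_2}\bfP^{s_2-a_2}[\bfy\cdots\bfR^{a_d}\bfP^{s_d-a_d}[\bfy]\cdots]]$ applied at $t$, which is precisely $\frakz_q[\bfw^{\bfa}(\bfs);t]$ in the notation of Theorem~\ref{thm:qIteratedZetaGen1}. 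Equivalently, at the level of the free algebra the element $\bfw_\O$ expands into the sum $\sum_{\bfa}\bfw^{\bfa}(\bfs)$ of the words of typical type form.

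Next I would verify the convergence hypothesis so that one may set $t=1$ termwise. Since Okounkov's values are defined for $s_j\ge 2$, we have $s_j^-=\lfloor s_j/2\rfloor\ge 1$, hence $a_j\ge 1$ for every $j$ and every choice of $\bfa$; in particular $a_1+\dots+a_j\ge j>0$ for all $j$, so the hypothesis of Theorem~\ref{thm:qIteratedZetaGen1} holds for each of the $2^d$ summands. Applying \eqref{equ:JexpressZetaGen1} (equivalently \eqref{equ:InductionThmZetaGen1} evaluated at $t=1$) to each summand gives
\[
\frakz_q[\bfw^{\bfa}(\bfs);1]=\sum_{k_1>\dots>k_d>0}\frac{q^{k_1a_1}\cdots q^{k_da_d}}{(1-q^{k_1})^{s_1}\cdots(1-q^{k_d})^{s_d}}.
\]

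Finally I would sum over the $2^d$ tuples $\bfa$ and recombine. The only dependence on $\bfa$ sits in the numerator $\prod_{j=1}^d q^{k_ja_j}$, and summing $a_j$ independently over $\{s_j^-,s_j^+\}$ refactors this as $\prod_{j=1}^d\big(q^{k_js_j^-}+q^{k_js_j^+}\big)$, which is exactly the numerator in the definition of $\frakz_q^\O[\bfs]$. This yields $\frakz_q[\bfw_\O;1]=\frakz_q^\O[\bfs]$, and multiplying through by $(1-q)^w$ gives the asserted identity for $\zeta_q^\O[\bfs]$. I expect no serious obstacle here: the whole argument is bookkeeping layered on top of Theorem~\ref{thm:qIteratedZetaGen1}, and the only points demanding care are the correct pairing of the split exponents $s_j^\pm$ with the respective powers of $\bfR$ and $\bfP$, and the convergence check licensing evaluation at $t=1$, both of which are settled once $s_j\ge 2$ is assumed.
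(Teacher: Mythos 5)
Your proof is correct and is exactly the derivation the paper intends: the corollary is stated there without proof precisely because it is the specialization of Theorem~\ref{thm:qIteratedZetaGen1} obtained by distributing the $2^d$ operator summands $\bfR^{a_1}\bfP^{s_1-a_1}\cdots\bfR^{a_d}\bfP^{s_d-a_d}$ (legitimate since $s_j\ge 2$ forces $a_j\ge s_j^-\ge 1$, so each summand is admissible) and then refactoring the numerators, just as you do. The only point worth flagging is notational: the paper's displayed definition writes the numerator as $q^{k_j^+s_j}+q^{k_j^-s_j}$ (splitting $k_j$), while your recombination produces $q^{k_js_j^-}+q^{k_js_j^+}$ (splitting $s_j$); these differ literally, and your reading is the intended one, being the only one consistent with the paper's stuffle function $F^\O_n(t)=(t^{n^+}+t^{n^-})/(1-t)^n$ evaluated at $t=q^{k_j}$ and with the remark comparing $\frakz_q^\O$ to Okounkov's original values.
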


It is possible to obtain the shuffle relations among $\frakz_q^\O[\bfs]$-values using
Corollary~\ref{cor:qIteratedZetaOk}.
The stuffle relations among $\frakz_q^\O[\bfs]$ is mentioned implicitly in Okounkov's
original paper. For our modified version, they
can be derived from the following fact (cf.\ Proposition 2.2 (ii) of \cite{BachmannKu2014}).
Let $F^\O_n(t)=(t^{n^+}+t^{n^-})/(1-t)^n$ for all $n\ge 2.$
Then for all $r,s \in \Z_{\ge 2}$, we have
\begin{equation*}
F^\O_r(t) \cdot F^\O_s(t) =
\left\{
  \begin{array}{ll}
    2F^\O_{r+s}(t), & \hbox{if $r$ or $s$ is even;} \\
    2F^\O_{r+s}(t) + \frac12 F^\O_{r+s-2}(t), & \hbox{if $r$ and $s$ are odd.}
  \end{array}
\right.
\end{equation*}
For example,
\begin{align*}
\frakz_q^\O[2,3]\frakz_q^\O[2]=&\, 2\frakz_q^\O[2,2,3]+\frakz_q^\O[2,3,2]+2\frakz_q^\O[4,3]+2\frakz_q^\O[2,5],\\
\frakz_q^\O[2,3]\frakz_q^\O[3]=&\, 2\frakz_q^\O[2,3,3]+\frakz_q^\O[3,2,3]+2\frakz_q^\O[5,3]+2\frakz_q^\O[2,6]+\frac12\frakz_q^\O[2,4].
\end{align*}

\section{{\lowercase {$q$}}-shuffle relations} \label{sec:qShuffle}
In contrast to the MZV case, the $q$-shuffle product is much more difficult to
define than the $q$-stuffle product. In this section we will use
the Rota-Baxter algebra approach to define this for type $\tI$, \II, \III, and $\tIV$ $q$-MZVs.
Note that this has been done for type \III $q$-MZVs in \cite{CEM2013} which
we recall first.

The \emph{$q$-shuffle product} on $\fA_\pi$
is defined recursively as follows: for any words $\bfu,\bfv\in X_\pi^*$ we define
$\myone\sha\bfu=\bfu\sha\myone=\bfu$ and
\begin{align}
 (y\bfu)\sha\bfv=\bfu\sha(y\bfv)=&\,y(\bfu\sha\bfv),	\label{shuffle-y}\\
\pi\bfu\sha\pi\bfv=&\,\pi (\bfu\sha\pi\bfv)
        +\pi (\pi\bfu\sha\bfv)-\pi (\bfu\sha\bfv),	\label{shuffle-pipi}\\
\gd\bfu\sha\gd\bfv=&\,\bfu\sha\gd\bfv+\gd\bfu\sha\bfv -\gd(\bfu\sha\bfv),		 \label{shuffle-gdgd}\\
\gd\bfu\sha\pi\bfv=\pi\bfv\sha\gd\bfu=&\,\gd(\bfu\sha\pi\bfv)+\gd\bfu\sha\bfv -\bfu\sha\bfv		 \label{shuffle-gdpi}	
\end{align}
for any words $\bfu,\bfv\in X_\pi^*$. The first equation reflects the fact that when $\bfy(t)$ is
multiplied in front of either of the two factors in a product, it can be multiplied after taking the product.
The other  equations formalize \eqref{equ:PisRBalg},\eqref{equ:RisRBalg},
\eqref{equ:DDproduct}, \eqref{equ:DPproduct}, and \eqref{equ:DRproduct}, respectively.

\begin{cor} \label{cor:rhoId}
For any words $\bfu,\bfv\in X_\pi^*$, we have
{\allowdisplaybreaks
\begin{align}
\rho\bfu\sha\rho\bfv=&\,\rho (\bfu\sha\rho\bfv)
   +\rho (\rho\bfu\sha\bfv)+\rho (\bfu\sha\bfv), \label{shuffle-rhorho}\\
\rho\bfu\sha\pi\bfv=\pi\bfv\sha\rho\bfu
    =&\,\rho(\rho\bfu\sha\bfv)+\rho(\bfu\sha\rho\bfv)+\rho\bfu\sha\bfv+\rho(\bfu\sha\bfv),	 \label{shuffle-rhopi}\\
\gd\bfu\sha\rho\bfv=\rho\bfv\sha\gd\bfu=&\,\gd(\bfu\sha\pi\bfv)-\bfu\sha\bfv
=\gd(\bfu\sha\rho\bfv)+\gd(\bfu\sha\bfv) -\bfu\sha\bfv.			 \label{shuffle-gdrho}
\end{align}}
\end{cor}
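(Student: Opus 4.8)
The plan is to derive all three identities directly from the definition of $\sha$ together with the single algebraic relation $\rho=\pi-\myone$, with no induction required. Since $\sha$ is $\Q$-bilinear, for any word $\bfw$ we have $\rho\bfw=\pi\bfw-\bfw$ inside $\fA_\pi$, and this substitution reduces every $\rho$-shuffle to the already-established rules \eqref{shuffle-pipi} and \eqref{shuffle-gdpi}. Each identity is then a matter of expanding both sides by bilinearity, applying the appropriate defining rule, and matching term by term.

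First I would prove \eqref{shuffle-rhorho}. Expanding by bilinearity gives $\rho\bfu\sha\rho\bfv=\pi\bfu\sha\pi\bfv-\pi\bfu\sha\bfv-\bfu\sha\pi\bfv+\bfu\sha\bfv$, and I apply \eqref{shuffle-pipi} to the first summand. The remaining work is bookkeeping: I would rewrite the stated right-hand side by the same substitution (using $\rho\bfu\sha\bfv=\pi\bfu\sha\bfv-\bfu\sha\bfv$ and $\bfu\sha\rho\bfv=\bfu\sha\pi\bfv-\bfu\sha\bfv$ inside the outer $\rho=\pi-\myone$), expand fully, and verify that the two expansions coincide. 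The computation for \eqref{shuffle-rhopi} is identical in spirit: expand $\rho\bfu\sha\pi\bfv=\pi\bfu\sha\pi\bfv-\bfu\sha\pi\bfv$, feed the first summand into \eqref{shuffle-pipi}, and match against the $\rho$-expansion of the claimed right-hand side.

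For \eqref{shuffle-gdrho} I would instead use the $\gd$-$\pi$ rule. Writing $\gd\bfu\sha\rho\bfv=\gd\bfu\sha\pi\bfv-\gd\bfu\sha\bfv$ and substituting \eqref{shuffle-gdpi} for the first term, the two copies of $\gd\bfu\sha\bfv$ cancel and leave $\gd(\bfu\sha\pi\bfv)-\bfu\sha\bfv$, which is the first displayed form. The second displayed form then follows from the trivial identity $\gd(\bfu\sha\pi\bfv)=\gd(\bfu\sha\rho\bfv)+\gd(\bfu\sha\bfv)$, itself a consequence of $\pi=\rho+\myone$ and bilinearity inside the bracket. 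The symmetries $\gd\bfu\sha\rho\bfv=\rho\bfv\sha\gd\bfu$ and $\rho\bfu\sha\pi\bfv=\pi\bfv\sha\rho\bfu$ are inherited directly from the commutativity already built into the defining rules \eqref{shuffle-gdpi} and \eqref{shuffle-pipi}.

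There is no genuine obstacle here, since the statement is a formal corollary of the shuffle definition; the only thing to watch is the sign bookkeeping when reassembling the scattered $\pi$- and $\myone$-terms back into the compact $\rho$-prefixed form on each right-hand side. I expect \eqref{shuffle-rhopi} to be the most error-prone of the three, as it mixes a $\rho$ on the left with a $\pi$ on the right and therefore produces the largest number of intermediate terms; carrying out that expansion carefully and confirming the cancellations is the one place where attention is needed.
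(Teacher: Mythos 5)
Your proposal is correct and follows essentially the same route as the paper, whose entire proof is the one-line remark that the identities follow from the defining rules \eqref{shuffle-y}--\eqref{shuffle-gdpi} together with $\rho=\pi-\myone$; you have simply carried out the bilinear expansion and term-matching that the paper leaves to the reader. The computations you outline (substituting $\rho\bfw=\pi\bfw-\bfw$, applying \eqref{shuffle-pipi} or \eqref{shuffle-gdpi}, and cancelling) all check out.
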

\begin{proof}
These follows easily from \eqref{shuffle-y}--\eqref{shuffle-gdpi} and the relation $\rho=\pi-\myone.$
\end{proof}

\begin{cor}\label{cor:consistentWithTakeyama} 
For $j=1,2$ let $X_\gth^{(j)}$ and $X_\gth^{(j),*}$ be the embedding
of $X_\gth$ and $X_\gth^*$ into $X_\pi^*$, respectively, by Definition~\ref{defn:q-AlgebrafApi}.
For any $\ga,\gb\in X^{(j)}_\gth$ and $\bfu,\bfv \in X_\gth^{(j),*}$,
we have $1\sha \bfu=\bfu \sha 1=\bfu$ and
\begin{equation*}
\ga\bfu\sha \gb\bfv=\ga(\bfu\sha \gb\bfv)+\gb(\ga\bfu\sha\bfv)+[\ga,\gb]_j (\bfu\sha\bfv),
\end{equation*}
where $[\ga,\gb]_j$ is determined by $[a,b]_1=[b,a]_1=-b$, $[a,b]_2=[b,a]_2=0$ and
\begin{equation}\label{eq:rho-prod}
[a,a]_j=(-1)^j a, \quad
[b,b]_j=-b\gam, \quad
[\ga,\gam]_j=[\gam,\ga]_j=-\ga\gam.
\end{equation}
\end{cor}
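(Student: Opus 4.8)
The plan is to verify the displayed quasi-shuffle recursion directly from the recursive definition \eqref{shuffle-y}--\eqref{shuffle-gdpi} of $\sha$ together with the $\rho$-relations \eqref{shuffle-rhorho}--\eqref{shuffle-gdrho} of Corollary~\ref{cor:rhoId}, reading $[\ga,\gb]_j$ as the $\Q$-bilinear operation with the values recorded in \eqref{eq:rho-prod}; the base case $\myone\sha\bfu=\bfu=\bfu\sha\myone$ is immediate from the definition. The crucial observation is that both embeddings send $b\mapsto\pi y$ and $\gth\mapsto\rho y$, and differ only in the image of $a$ (namely $\pi$ in embedding (A), $j=1$, and $\rho$ in embedding (B), $j=2$) and of $a^{-1}$ (namely $\gd$ for $j=1$, unused for $j=2$). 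Thus every embedded letter except the lone $a$ is a two-symbol word ending in $\gam=y$, and the whole argument is organized around peeling off this trailing $y$.

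First I would dispose of the $\gam$-brackets. For any embedded letter $\ga$, relation \eqref{shuffle-y} gives $\ga\bfu\sha\gam\bfv=\gam(\ga\bfu\sha\bfv)$ and $\bfu\sha\gam\bfv=\gam(\bfu\sha\bfv)$, whence $\ga\bfu\sha\gam\bfv=\ga(\bfu\sha\gam\bfv)+\gam(\ga\bfu\sha\bfv)-\ga\gam(\bfu\sha\bfv)$; this yields $[\ga,\gam]_j=-\ga\gam$ uniformly in $j$, the last family of \eqref{eq:rho-prod}. Next I would treat the pairs not involving a lone $a$, namely $[b,b]$, $[\gth,\gth]$ and $[b,\gth]$. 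Since $b=\pi y$ and $\gth=\rho y$ coincide in both embeddings, these computations are $j$-independent: starting from \eqref{shuffle-pipi} (leading $\pi$) or \eqref{shuffle-rhorho}/\eqref{shuffle-rhopi} (leading $\rho$), one peels the trailing $y$'s with \eqref{shuffle-y} and reassembles the leading symbols via $\pi=\rho+\myone$, i.e. $b=\gth+\gam$. For instance this produces $b\bfu\sha b\bfv=b(\bfu\sha b\bfv)+b(b\bfu\sha\bfv)-b\gam(\bfu\sha\bfv)$, so $[b,b]_j=-b\gam$, and likewise $[\gth,\gth]_j=\gth\gam$, exactly what bilinear extension of \eqref{eq:rho-prod} through $\gth=b-\gam$ predicts.

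The genuinely $j$-dependent part is the lone $a$. Here $[a,a]_j$ comes straight from the leading-symbol relation: \eqref{shuffle-pipi} contributes coefficient $-1$ when $a=\pi$ ($j=1$) while \eqref{shuffle-rhorho} contributes $+1$ when $a=\rho$ ($j=2$), giving precisely $[a,a]_j=(-1)^j a$. The mixed bracket $[a,b]_j$ is obtained the same way after peeling the single trailing $y$ of $b$: with $a=\pi$ relation \eqref{shuffle-pipi} gives $[a,b]_1=-b$, whereas with $a=\rho$ relation \eqref{shuffle-rhopi}, after using $b=\gth+\gam$ to recombine, gives $[a,b]_2=0$. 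All remaining pairs then follow by $\Q$-bilinearity from $\gth=b-\gam$, and I would record one or two to confirm internal consistency (e.g. $[a,\gth]_2=[a,b]_2-[a,\gam]_2=\gth$ since $a\gam=\rho y=\gth$ in embedding (B)). Throughout, closure of the embedded subalgebra under $\sha$ is automatic: every reduction step stays inside $\fA_\pi$ and the recombinations via $\pi=\rho+\myone$ return embedded words.

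The main obstacle is bookkeeping rather than conceptual: one must track the trailing $\gam=y$ correctly through each application of \eqref{shuffle-y} and the leading-symbol relation, then recombine $\rho$ and $\myone$ into the letters $b$ and $\gth$ without sign or degree errors. A second point requiring care is the letter $a^{-1}=\gd$, which occurs only in embedding (A) (type \III). Its shuffle relations \eqref{shuffle-gdgd}, \eqref{shuffle-gdpi} and \eqref{shuffle-gdrho} are of differential Rota-Baxter type (cf.\ \eqref{equ:DDproduct}) and carry unprefixed terms, so they are not of the quasi-shuffle shape above; accordingly the recursion as stated is the one valid on the letters $\{a,b,\gth\}$, and any word containing $\gd$ (which arises only for non-positive type \III arguments, excluded here) must instead be shuffled using those differential relations directly.
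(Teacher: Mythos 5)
Your proposal is correct and takes essentially the same route as the paper: the paper's own proof verifies the quasi-shuffle recursion by direct computation with \eqref{shuffle-y}--\eqref{shuffle-gdrho}, writing out the sample cases $b\bfu\sha b\bfv$ and $\gth\bfu\sha\gth\bfv$ and leaving the rest to the reader, which is exactly your case analysis (your reduction of the $\gth$-brackets via bilinearity and $\gth=b-\gam$ is only a minor streamlining of what the paper computes directly). Your closing caveat that the stated recursion cannot hold for $a^{-1}=\gd$ --- since \eqref{shuffle-gdgd} and \eqref{shuffle-gdpi} contain unprefixed terms and \eqref{eq:rho-prod} assigns $\gd$ no bracket --- is a correct and useful clarification of the corollary's scope that the paper leaves implicit.
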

\begin{proof}
All of these identities follow from straight-forward computation using
\eqref{shuffle-y}--\eqref{shuffle-gdrho}. For example,
\begin{equation} \label{equ:bbShuffle}
\begin{split}
b\bfu\sha b\bfv=\pi y\bfu\sha \pi y\bfv
= &\, \pi (y\bfu\sha \pi y\bfv)+\pi(\pi y\bfu\sha y\bfv)-\pi (y\bfu\sha y\bfv)\\
= &\, \pi y(\bfu\sha \pi y\bfv)+\pi y(\pi y\bfu\sha\bfv)-\pi y y(\bfu\sha\bfv)\\
= &\, b(\bfu\sha b\bfv)+b(b\bfu\sha\bfv)-b\gam(\bfu\sha\bfv).
\end{split}
\end{equation}
Similarly,
\begin{equation} \label{equ:gthgthShuffle}
\begin{split}
\gth\bfu\sha \gth\bfv=\rho y\bfu\sha \rho y\bfv
= &\, \rho (y\bfu\sha \rho y\bfv)+\rho(\rho y\bfu\sha y\bfv)+\rho (y\bfu\sha y\bfv)\\
= &\, \rho y(\bfu\sha \rho y\bfv)+\rho y(\rho y\bfu\sha\bfv)+\rho y y(\bfu\sha\bfv)\\
= &\, \gth(\bfu\sha \gth\bfv)+\gth(\gth\bfu\sha\bfv)+\gth\gam(\bfu\sha\bfv).
\end{split}
\end{equation}

The rest of the proof is left to the interested reader.
\end{proof}

\begin{prop}\label{prop:fAShaCommAss}
The algebra $(\fA_\pi,\sha)$ is commutative and associative.
\end{prop}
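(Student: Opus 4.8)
The plan is to prove both assertions by induction on the total length $\ell(\bfu)+\ell(\bfv)$ (for commutativity) and $\ell(\bfu)+\ell(\bfv)+\ell(\bfw)$ (for associativity), where $\ell$ denotes the number of letters in a word. The key point is that each of the defining recursions \eqref{shuffle-y}--\eqref{shuffle-gdpi} expresses a shuffle of two words in terms of shuffles of strictly smaller total length, so induction on this measure is well-founded; the same observation shows that $\sha$ is well-defined. I extend $\sha$ bilinearly, so it suffices to treat monomials (words), the base case being when some factor equals $\myone$, where both sides collapse to the remaining shuffle via the unit axiom.

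For commutativity, fix words $\bfu,\bfv$ and argue by their leading letters. If either begins with $y$, relation \eqref{shuffle-y} gives $y\bfu\sha\bfv=y(\bfu\sha\bfv)=\bfu\sha y\bfv$, and the inductive hypothesis finishes the case. If both begin with $\pi$, the right-hand side of \eqref{shuffle-pipi} is visibly invariant under interchanging the two factors once the inner shuffles are commuted by induction; the same holds for \eqref{shuffle-gdgd} when both begin with $\gd$. Finally, the mixed case is handled by \eqref{shuffle-gdpi}, in which the equality $\gd\bfu\sha\pi\bfv=\pi\bfv\sha\gd\bfu$ is imposed as part of the definition. This disposes of commutativity, which I will then use freely in the associativity argument.

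For associativity I must show $(\bfu\sha\bfv)\sha\bfw=\bfu\sha(\bfv\sha\bfw)$. First I reduce all cases in which at least one of $\bfu,\bfv,\bfw$ begins with $y$: using only \eqref{shuffle-y} one can pull a single leading $y$ out to the front of both triple products (the letter $y$ factors out of any shuffle, hence out of any iterated shuffle), reducing the total length by one and invoking the inductive hypothesis. It remains to treat words all of whose first letters lie in $\{\pi,\gd\}$; by commutativity there are only a few essentially distinct leading-letter patterns, namely the multisets $\{\pi\pi\pi\}$, $\{\pi\pi\gd\}$, $\{\pi\gd\gd\}$, $\{\gd\gd\gd\}$. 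In each I expand both associativity brackets one step using the appropriate relation among \eqref{shuffle-pipi}, \eqref{shuffle-gdgd}, \eqref{shuffle-gdpi}, push the outer operator through, and match the resulting terms by the inductive hypothesis on shorter triples.

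The main obstacle is the bookkeeping in the mixed patterns involving $\gd$, such as $(\gd\bfu\sha\pi\bfv)\sha\pi\bfw$ or $(\gd\bfu\sha\gd\bfv)\sha\pi\bfw$. Unlike a pure quasi-shuffle, the relations \eqref{shuffle-gdgd} and \eqref{shuffle-gdpi} produce terms carrying no leading $\gd$ (reflecting that $\gd$ realizes the \emph{differential} operator $\bfD$ rather than a Rota--Baxter operator, cf.\ \eqref{equ:DDproduct}--\eqref{equ:DRproduct} in Proposition~\ref{prop:checkRBalg}); consequently the two expansions do not term-match as transparently as in the $\pi$-only case, and one must check that the extra prefix-free terms cancel correctly. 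Here it is convenient to rewrite $\pi=\rho+\myone$ and organize the cancellation using the companion identities \eqref{shuffle-rhorho}--\eqref{shuffle-gdrho} of Corollary~\ref{cor:rhoId}, whose coefficients are those of genuine Rota--Baxter operators. As a guiding check---though not a substitute for the induction, since it need not be injective---the realization $\pi\mapsto\bfP$, $\gd\mapsto\bfD$, and $y\mapsto(\text{multiplication by }\bfy)$ carries $\sha$ to ordinary multiplication in $t\Q[\![t,q]\!]$ by Proposition~\ref{prop:checkRBalg}, so every identity to be verified holds after applying this map; this confirms the coefficient choices and pinpoints exactly which terms must cancel.
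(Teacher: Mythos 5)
The paper itself contains no argument here: its ``proof'' is the citation to \cite[Theorem~7]{CEM2013}, so you are supplying the induction that the paper (and, in essentially the same form, the cited reference) relies on. Your commutativity half is complete and correct: the case analysis on leading letters is exhaustive, the mixed case is imposed by \eqref{shuffle-gdpi}, and the induction on total length is well-founded. The associativity half, however, has a concrete gap. Writing $[\bfu,\bfv,\bfw]=(\bfu\sha\bfv)\sha\bfw-\bfu\sha(\bfv\sha\bfw)$, commutativity gives only the reversal antisymmetry $[\bfu,\bfv,\bfw]=-[\bfw,\bfv,\bfu]$; it does \emph{not} allow you to transpose the first two (or last two) arguments, since $[\bfu,\bfv,\bfw]-[\bfv,\bfu,\bfw]=\bfv\sha(\bfu\sha\bfw)-\bfu\sha(\bfv\sha\bfw)$ is itself an instance of the associativity being proved. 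Consequently the leading-letter patterns $(\pi,\gd,\pi)$ and $(\gd,\pi,\gd)$ are self-reversed and are \emph{not} covered by checking $(\pi,\pi,\gd)$ and $(\pi,\gd,\gd)$: there are six cases up to reversal, not four multisets. Followed literally, your reduction leaves those two patterns unproven.

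Second, the actual cancellation of the prefix-free terms created by \eqref{shuffle-gdgd} and \eqref{shuffle-gdpi}---which is the entire content of the theorem---is asserted rather than carried out. It does work, and no detour through $\rho=\pi-\myone$ is needed: for instance, for the triple $(\gd\bfu,\gd\bfv,\pi\bfw)$, expanding both sides, applying \eqref{shuffle-gdgd} and \eqref{shuffle-gdpi} termwise, and reassociating all shorter triples by the inductive hypothesis (together with commutativity) collapses the difference to
\begin{equation*}
\big[(\bfu\sha\gd\bfv)+(\gd\bfu\sha\bfv)-\gd(\bfu\sha\bfv)-(\gd\bfu\sha\gd\bfv)\big]\sha\bfw,
\end{equation*}
which vanishes by \eqref{shuffle-gdgd} itself; each of the six cases closes by this same mechanism, with one of the defining relations reappearing as the final cancellation. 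Note also that your ``guiding check'' via $\pi\mapsto\bfP$, $\gd\mapsto\bfD$, $y\mapsto\bfy$ is weaker than you state: the realization is not even defined on all of $\fA_\pi$ (for example $\bfP$ applied to a nonzero constant diverges, so words not ending in $y$ have no image), so it can at best test identities among words in $\fA_\pi y$. To turn your outline into a proof, enumerate the six reversal classes and exhibit the collapse in each.
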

\begin{proof}
See \cite[Theorem~7]{CEM2013}.
\end{proof}

The following corollary generalizes \cite[Proposition 1]{Takeyama2013}.
\begin{cor}\label{cor:fAgthShaCommAss}
For $j=1$ or $2$ the algebras $(\fA^{(j)}_\gth,\sha)$ are commutative and associative.
\end{cor}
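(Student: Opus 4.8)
The plan is to deduce both properties from the corresponding facts about the ambient algebra $(\fA_\pi,\sha)$, which were established in Proposition~\ref{prop:fAShaCommAss}. Since commutativity and associativity of a product are automatically inherited by any subset that is closed under that product, the entire content of the statement reduces to a single \emph{closure} assertion: for $j=1,2$, the subspace $\fA_\gth^{(j)}$ is stable under $\sha$, that is $\bfu\sha\bfv\in\fA_\gth^{(j)}$ whenever $\bfu,\bfv\in\fA_\gth^{(j)}$. Once this is in hand, $(\fA_\gth^{(j)},\sha)$ is a genuine subalgebra of $(\fA_\pi,\sha)$, and commutativity and associativity follow by restriction.

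To prove closure I would run an induction on the total length $|\bfu|+|\bfv|$ of the two factors, using the recursion recorded in Corollary~\ref{cor:consistentWithTakeyama}. The base case is $\myone\sha\bfw=\bfw\sha\myone=\bfw$, which trivially stays in $\fA_\gth^{(j)}$. For the inductive step, write the factors as $\ga\bfu$ and $\gb\bfv$ with $\ga,\gb\in X_\gth^{(j)}$ and $\bfu,\bfv\in X_\gth^{(j),*}$, and apply the identity
\[
\ga\bfu\sha\gb\bfv=\ga(\bfu\sha\gb\bfv)+\gb(\ga\bfu\sha\bfv)+[\ga,\gb]_j(\bfu\sha\bfv).
\]
Each of the three inner shuffles pairs words of strictly smaller total length, so by the inductive hypothesis $\bfu\sha\gb\bfv$, $\ga\bfu\sha\bfv$, and $\bfu\sha\bfv$ all lie in $\fA_\gth^{(j)}$. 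Prepending the letters $\ga$ and $\gb$, which are images of generators of $\fA_\gth$, keeps the first two summands inside $\fA_\gth^{(j)}$.

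The one point that requires genuine verification — and which I expect to be the only real obstacle — is that the bracket term $[\ga,\gb]_j(\bfu\sha\bfv)$ also remains in $\fA_\gth^{(j)}$. For this I would read off the explicit values from \eqref{eq:rho-prod}: every $[\ga,\gb]_j$ is, up to sign, one of $0$, $a$, $b$, $b\gam$, or $\ga\gam$. Since $\gam=b-\gth$ by the definition in Definition~\ref{defn:q-AlgebrafAgth}, each such expression (for instance $b\gam=b^2-b\gth$ and $\ga\gam=\ga b-\ga\gth$) is a $\Q$-linear combination of words in the generators $a,b,\gth$, hence an element of $\fA_\gth^{(j)}$. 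Therefore left multiplication by $[\ga,\gb]_j$ sends $\fA_\gth^{(j)}$ into itself, so the bracket term lies in $\fA_\gth^{(j)}$ as needed. This closes the induction.

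With closure established, commutativity and associativity of $(\fA_\gth^{(j)},\sha)$ are immediate consequences of Proposition~\ref{prop:fAShaCommAss} by restriction, which completes the argument. I note that this simultaneously recovers and extends Takeyama's result, which corresponds to one of the two embeddings restricted to the type~$\tI$ generators.
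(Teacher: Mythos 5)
Your proof is correct and takes essentially the same route as the paper: both reduce the statement to the fact that $(\fA_\gth^{(j)},\sha)$ is a subalgebra of $(\fA_\pi,\sha)$ — closure being supplied by the recursion of Corollary~\ref{cor:consistentWithTakeyama} — and then inherit commutativity and associativity from Proposition~\ref{prop:fAShaCommAss}. The only difference is that you spell out the length induction and check explicitly that the bracket terms $[\ga,\gb]_j$ lie in $\fA_\gth^{(j)}$, steps the paper leaves implicit in its one-line proof.
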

\begin{proof}
This follows immediately from Proposition~\ref{prop:fAShaCommAss}
since $(\fA^{(j)}_\gth,\sha)$ are sub-algebras of $(\fA_\pi,\sha)$
if $\sha$ for $\fA^{(j)}_\gth$ is defined as in
Corollary~\ref{cor:consistentWithTakeyama}.
\end{proof}

Our next theorem shows that we may use the shuffle algebra structure defined above to
describe the shuffle relations among different types of $q$-MZVs. Before doing so, we need
to show that for each type the shuffle product really makes sense.

\begin{prop}\label{equ:shuClosedForAll}
Embed $\fA_\tI^0,\fA_\II^0,\fA_\tIV^0\subset\fA_\gth^{(B)}$ and $\fA_\III^0 \subset \fA_\gth^{(A)}$.
Then for each type $\gt$, if the two words $\bfu,\bfv \in \fA_\gt^0$ have
the typical type $\gt$ form listed in Corollary~\ref{cor:qIteratedZetaAll} then
there is an algorithm to express $\bfu\sha\bfv$ using only those words in the same form.
\end{prop}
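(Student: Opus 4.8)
The plan is to argue by induction on the total length $|\bfu|+|\bfv|$, peeling off the two \emph{leading blocks} (the letters of $Y_\gt$) rather than single letters of $\fA_\pi$. If $X,Y$ denote these leading blocks, I would establish a block-level recursion
\begin{equation*}
X\bfu\sha Y\bfv=X\,(\bfu\sha Y\bfv)+Y\,(X\bfu\sha\bfv)+C(X,Y;\bfu,\bfv),
\end{equation*}
in which the collision term $C$ is obtained by expanding $X=a^{k-1}b$, $Y=a^{l-1}b$ (or with $\gth$ in place of $b$) through the single-letter recursion of Corollary~\ref{cor:consistentWithTakeyama} and collecting. For the two \emph{homogeneous} types $\tI$ and $\II$, every position admits the same set of blocks, so prepending $X$ or $Y$ to a shorter shuffle --- which by the inductive hypothesis is already a combination of typical words --- stays within the span; since each shuffle call on the right has strictly smaller total argument length, the recursion terminates, and it is exactly the algorithm demanded by the statement. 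Everything thus reduces to showing that the collision term $C$ again lies in the typical-form span.

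Two normalizing identities carry this out. First, the brackets that appear in $C$ all involve $\gam$: one has $[b,b]=-b\gam$ and $[\gth,\gth]=\gth\gam$ by \eqref{equ:bbShuffle}--\eqref{equ:gthgthShuffle}, and $[\ga,\gam]_j=-\ga\gam$. These are reabsorbed by $\gam=b-\gth$, i.e.
\begin{equation*}
\gam\,\bfw=z_{1}\bfw-\gth\,\bfw,
\end{equation*}
which rewrites a stray $\gam=y$ as a difference of genuine blocks; for types $\tI$ and $\tIV$ this is precisely what pulls the letter $\gth$ into the answer and is the source of the required regularization, so that the shuffle of two admissible type $\I$ (resp.\ $\IV$) words lands in the larger space $\fA_\tI^0$ (resp.\ $\fA_\tIV^0$). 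For type $\II$ no $\gth$ occurs and closure is immediate in $\Q\langle\rho,y\rangle$ once the degenerate block $z'_0=y=\gam$ is allowed. Second, types $\III$ and $\tIV$ are \emph{not} homogeneous: a type $\III$ word has one distinguished leading block $z'_{s_1}$ followed only by $z$'s, and a type $\tIV$ word has a leading $z_{s_1}$ (or $\gth$) followed only by $z'$'s and $\gth$'s. Here the recursion may deposit a block of the wrong kind in a non-leading slot, but this is corrected by the \emph{finite} conversions
\begin{equation*}
z'_s=z_s-z_{s-1}\ (\sif_{-}),\qquad z_s=z'_s+z'_{s-1}\ (\sif_{+}),
\end{equation*}
coming from $\rho=\pi-\myone$; crucially these leave the single leading block untouched and introduce no telescoping.

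The main obstacle is type $\III$, handled in the embedding $\fA_\gth^{(A)}$. There the letter $a^{-1}=\gd$ genuinely occurs and all integer indices are permitted, so the collision $C$ must be computed with the $\gd$-rules \eqref{shuffle-gdgd}--\eqref{shuffle-gdpi} of Corollary~\ref{cor:rhoId} as well; the delicate point is to verify that, after applying $\sif_{-}$ to normalize every non-leading $z'$ into $z$'s, exactly one leading $\rho$ (equivalently one application of the remainder operator $\bfR$) survives at the front, so that the result is genuinely $z'$-led. This is exactly the reason the original Jackson representation of \cite{CEM2013} had to be modified, replacing the leading $\bfP^{s_1}$ by $\bfP^{s_1-1}\bfR$ as in Corollary~\ref{cor:qIteratedZetaAll}; once this bookkeeping is in place the same induction closes the argument for all four types.
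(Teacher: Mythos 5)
Your strategy---expand the shuffle through the letter-level rules and renormalize with $\rho=\pi-\myone$---is in essence the paper's, but your proof has a genuine gap exactly at the step you yourself call ``the delicate point'' and then defer. For types \III and $\tIV$ the entire content of the proposition \emph{is} the leading-block analysis: one must show that every word produced by the expansion keeps its first $\rho$ in front of all the $y$'s (type \III), so that, after commuting the $\pi$'s and $\rho$'s (they commute, both being polynomials in $\pi$), it can be written as $\pi^s\rho^r y\cdots$ and then renormalized as $\pi^s(\pi-\myone)^{r-1}\rho y\cdots$, a combination of $z'$-led typical words; and, for $\tIV$, that every term has the shape $\rho^n y\bfw$ with $n\ge 1$, $\bfw\in Y_\tIV^*$, which equals $\gth\bfw$ when $n=1$ and otherwise is converted to typical form by \eqref{equ:phikrhoykyCompare}. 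You assert that ``once this bookkeeping is in place the same induction closes the argument,'' but you never establish this positional invariant, and the tools you name cannot supply it: the conversions $\sif_{\pm}$ repair only \emph{non-leading} blocks and cannot create the required single leading $\rho$ (resp.\ leading $\gth$ or $\rho^{n-1}\pi y$). Proving the invariant---by inspecting how \eqref{shuffle-y}, \eqref{shuffle-pipi}, \eqref{shuffle-rhorho} place the letters---is the actual proof; without it the claimed algorithm does not exist.

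Two further inaccuracies undermine your setup. First, the collision term $C$ does not have the structure you describe: its brackets are not only the $\gam$-ones, since expanding blocks letter by letter also produces $[a,a]_j=(-1)^ja$ (and, in embedding (A), $[a,b]_1=-b$), and $C$ contains terms in which \emph{partially contracted} blocks are attached to the tails rather than a factor of the form $[X,Y](\bfu\sha\bfv)$; for instance, in type \II one finds
\begin{equation*}
z'_2\bfu\sha z'_1\bfv
=z'_2(\bfu\sha z'_1\bfv)+z'_1(z'_2\bfu\sha\bfv)
+z'_2(z'_1\bfu\sha\bfv)+z'_1(z'_1\bfu\sha\bfv)+z'_2z'_0(\bfu\sha\bfv).
\end{equation*}
An induction on total length survives this, but not with the bookkeeping you propose. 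Second, for the typical type \III words of Corollary~\ref{cor:qIteratedZetaAll} one has $\bfs\in\N^d$, so the letter $\gd=a^{-1}$ never occurs and the rules \eqref{shuffle-gdgd}--\eqref{shuffle-gdpi} are irrelevant; the algorithm needs only \eqref{shuffle-y}, \eqref{shuffle-pipi} and the $\rho$-identities of Corollary~\ref{cor:rhoId}. Your appeal to the $\gd$-rules misreads what the typical form is, and in any case is no substitute for the missing leading-block argument.
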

\begin{proof}
The case for type $\tI$ is proved by \cite[Proposition~2.4]{Takeyama2013}.

Type \II is in fact the easiest since we can restrict ourselves to use only
\eqref{shuffle-y} and \eqref{shuffle-rhorho} to compute the shuffle and
therefore $\pi$ never comes into the picture. Clearly all such words must
start with $\rho$ and end with $y$.

For type \III let's assume $\bfu=\pi^{s_1-1}\rho y\pi^{s_2} y \dots \pi^{s_d} y$
and $\bfv=\pi^{t_1-1}\rho y\pi^{t_2} y \dots \pi^{t_d} y$.
If we use the definition \eqref{shuffle-pipi} repeatedly then in each word appearing in
$\bfu\sha\bfv$ the first $\rho$ always appears before all the $y$'s. Such a word
can be written in the form $\pi^s \rho^r y \cdots$ for some $s\in\Z$ and $r\ge 1$
(notice that if $\rho$ and $\pi$ are commutative). Now we can
rewrite this as $\pi^s(\pi-\myone)^{r-1} \rho  y \cdots$ and replace all the $\rho$'s
after the first $y$ by $\pi-\myone$. This produces a word of typical type \III form.

Type $\tIV$ is similar to type \III except that we need to take $\gth$ into account.
Notice that by definition if $\bfw \in \fA_\IV^0$ then it can be written as
$\gth \bfw'$, or $z_k\bfw'$ ($k\ge 2$, $\bfw'\in Y_\tIV^*$) or a finite
linear combination of these. So we have three cases to check.
First, we prove that for all $k,l\ge 2$ and $\bfu,\bfv\in Y_\tIV^*$
\begin{equation}\label{equ:typeIIIshuClosedCase1}
      z_k\bfu \sha z_l\bfv \in \fA_\tIV^0.
\end{equation}
Indeed, putting $k=r+1$ and $l=s+1$ we have
\begin{equation*}
\rho^r \pi y \bfu \sha \rho^s \pi y \bfv
=\rho(\rho^{r-1} \pi y \bfu \sha \rho^s \pi y \bfv)
+\rho(\rho^r\pi y \bfu \sha \rho^{s-1} \pi y \bfv)
+\rho(\rho^{r-1} \pi y \sha \rho^{s-1} \pi y \bfv).
\end{equation*}
Now inside each of the three parentheses we replace every $\pi$ by $\rho+\myone$ and
use  only \eqref{shuffle-y} and \eqref{shuffle-rhorho}
to expand (recall that $\gth=\rho y$). We see that every term in the expansion has the form
$\rho^n y \bfw$ for some $n\ge 1$ and $\bfw\in Y_\IV^*$. If $n=1$ then we have
$\rho^n y\bfw =\gth\bfw \in \fA_\tIV^0.$
If $n\ge 2$ we can write it as
\begin{equation}\label{equ:phikrhoykyCompare}
\rho^{n-1} (\pi-\myone) y \bfw
=\sum_{j=1}^{n-1} (-1)^{j-1} \rho^{n-j} \pi y \bfw +(-1)^{n-1} \gth\bfw\in \fA_\IV^0
\end{equation}
with each word of typical type $\tIV$ form.

Now we assume $k=r+1\ge 2$ and $\bfu,\bfv\in Y_\tIV^*$. Then
\begin{equation*}
  z_k\bfu \sha \gth \bfv
=\rho^r \pi y \bfu \sha \rho y \bfv
=\rho(\rho^{r-1} \pi y \bfu \sha \rho y \bfv)
+\rho y(\rho^r\pi y \bfu \sha \bfv)
+\rho y(\rho^{r-1} \pi y \sha \bfv)\in \fA_\IV^0
\end{equation*}
since $\rho y=\gth$ and the first term can be dealt with as in the proof of
\eqref{equ:typeIIIshuClosedCase1}.

Finally,
\begin{equation*}
  \gth \bfu \sha \gth \bfv \in \fA_\tIV^0
\end{equation*}
follows from \eqref{equ:gthgthShuffle} immediately. This completes the proof of the proposition.
\end{proof}

The following theorem generalizes \cite[Theorem 2]{Takeyama2013} but it does not contain
\cite[Theorem 7]{CEM2013} since our word representation of type \III $q$-MZVs is different
from that given in \cite{CEM2013}.
\begin{thm}\label{thm:qZetaShuffleAll}
Embed $\fA_\tI^0,\fA_\II^0,\fA_\tIV^0\subset\fA_\gth^{(B)}$ and $\fA_\III^0 \subset \fA_\gth^{(A)}$.
Then for each type $\gt$ and for any $\bfu_\gt,\bfv_\gt \in \fA_\gt^0$, we have
\begin{equation}\label{equ:qZetaShuffleAll}
\frakz_q[\bfu_\gt]\frakz_q[\bfv_\gt]=\frakz_q[\bfu_\gt \sha\bfv_\gt].
\end{equation}
\end{thm}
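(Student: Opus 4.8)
The plan is to prove the stronger \emph{functional} identity
\begin{equation*}
\frakz_q[\bfu;t]\,\frakz_q[\bfv;t]=\frakz_q[\bfu\sha\bfv;t]
\end{equation*}
as formal power series in $t\Q[\![t,q]\!]$, valid for \emph{all} words $\bfu,\bfv$ (ending in $y$), and only afterwards to specialize to $t=1$. By Corollary~\ref{cor:qIteratedZetaAll} the value $\frakz_q[\bfw_\gt]$ is exactly $\frakz_q[\bfw_\gt;1]$, so \eqref{equ:qZetaShuffleAll} follows once this functional identity is in hand and the passage to the boundary is justified. The point of working with $t$ as a formal variable is that $\bfy(1)$ diverges, so one cannot simply multiply two values and evaluate; the identity must first be established at the level of series, where everything lives in $t\Q[\![t,q]\!]$.

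First I would set up the realization recursively by $\frakz_q[\myone;t]=1$, $\frakz_q[y\bfw;t]=\bfy(t)\frakz_q[\bfw;t]$, $\frakz_q[\pi\bfw;t]=\bfP[\frakz_q[\bfw;t]]$ and $\frakz_q[\gd\bfw;t]=\bfD[\frakz_q[\bfw;t]]$ (hence $\frakz_q[\rho\bfw;t]=\bfR[\frakz_q[\bfw;t]]$, since $\rho=\pi-\myone$ and $\bfR=\bfP-\idf$), extended $\Q$-linearly; one checks this reproduces the operator strings of Corollary~\ref{cor:qIteratedZetaAll}. The functional identity is then proved by induction on the total length $|\bfu|+|\bfv|$, with base case one word equal to $\myone$, which is immediate. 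For the inductive step I would write $\bfu=\ga\bfu'$, $\bfv=\gb\bfv'$ and split on the leading letters. Each case matches a shuffle recursion rule with the corresponding product identity of Proposition~\ref{prop:checkRBalg}: when $\ga=\gb=\pi$, applying \eqref{equ:RisRBalg} to $f=\frakz_q[\bfu';t]$, $g=\frakz_q[\bfv';t]$, using $\bfP[\frakz_q[\bfu';t]]=\frakz_q[\bfu;t]$ together with the inductive hypothesis on the three shorter products, and recombining by $\Q$-linearity, reproduces the right-hand side of \eqref{shuffle-pipi}. The cases $(\gd,\gd)$ and $(\gd,\pi)$ use \eqref{equ:DDproduct} and \eqref{equ:DPproduct} to match \eqref{shuffle-gdgd} and \eqref{shuffle-gdpi}, while the cases $(\rho,\rho)$, $(\rho,\pi)$, $(\gd,\rho)$ use \eqref{equ:PisRBalg}, \eqref{equ:PRproduct}, \eqref{equ:DRproduct} to match \eqref{shuffle-rhorho}--\eqref{shuffle-gdrho} of Corollary~\ref{cor:rhoId}; whenever a leading letter is $y$ the identity follows at once from \eqref{shuffle-y}, since multiplication by $\bfy(t)$ factors out of the product.

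Finally I would specialize $t=1$. For admissible $\bfu_\gt,\bfv_\gt\in\fA_\gt^0$ the series $\frakz_q[\bfu_\gt;1]$ and $\frakz_q[\bfv_\gt;1]$ converge by Corollary~\ref{cor:convergenceDomain}, and by Proposition~\ref{equ:shuClosedForAll} the product $\bfu_\gt\sha\bfv_\gt$ is a $\Q$-linear combination of words of the typical type $\gt$ form, each again admissible, so $\frakz_q[\bfu_\gt\sha\bfv_\gt;1]$ converges as well. Evaluating the functional identity at $t=1$ then yields \eqref{equ:qZetaShuffleAll}.

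I expect the induction of the middle paragraph to be the bulk of the labor but essentially mechanical, precisely because the shuffle recursion \eqref{shuffle-y}--\eqref{shuffle-gdpi} and Corollary~\ref{cor:rhoId} were \emph{designed} to formalize the identities of Proposition~\ref{prop:checkRBalg}. The genuine obstacle is the passage to $t=1$: the functional identity holds only coefficientwise as a formal series, and individual factors $\frakz_q[\bfu_\gt;t]$ need not extend continuously to the boundary unless $\bfu_\gt$ is admissible. One must therefore invoke Proposition~\ref{equ:shuClosedForAll} to guarantee that every word occurring in $\bfu_\gt\sha\bfv_\gt$ remains admissible, and — if one argues analytically rather than purely formally — an Abel-type limit as $t\to1^-$ to legitimize evaluating both sides simultaneously at $t=1$.
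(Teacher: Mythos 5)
Your proposal is correct and takes essentially the same route as the paper: the paper's own (very terse) proof consists precisely of observing that the functional identity for $\frakz_q[\bfw_\gt;t]$ holds because of the Rota-Baxter identities in Proposition~\ref{prop:checkRBalg}, and then specializing $t=1$ via Corollary~\ref{cor:qIteratedZetaAll}. Your length induction matching each shuffle rule \eqref{shuffle-y}--\eqref{shuffle-gdpi} and Corollary~\ref{cor:rhoId} with the corresponding operator identity, and your appeal to Proposition~\ref{equ:shuClosedForAll} to justify evaluating $\frakz_q[\bfu_\gt\sha\bfv_\gt;t]$ at $t=1$, simply make explicit the details the paper leaves implicit.
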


\begin{proof}
For each type $\gt$ we observe that $\frakz_q[\bfw_\gt;t]$
satisfies \eqref{equ:qZetaShuffleAll} because of the identities
in Proposition~\ref{prop:checkRBalg}.
Then the theorem follows from the fact that
$\frakz_q[\bfw_\gt]=\frakz_q[\bfw_\gt;\, 1]$ for any word $\bfw_\gt\in\fA_\gt^0$
by Corollary~\ref{cor:qIteratedZetaAll}.
\end{proof}

\section{Duality Relations}
The DBSFs do not contain all linear relations among the various types of $q$-MZVs.
In \cite{Takeyama2013}, Takeyama discovered the following relations which
provides some of the missing relations for type $\tI$ $q$-MZVs,
at least in the small weight cases. He called them Resummation Identities.
We would rather call them ``duality" relations because of their similarity to
the duality relations for the ordinary MZVs.

\begin{thm}
\label{thm:resummation-dualityI}  \emph{(\cite[Theorem 4]{Takeyama2013})}
For a positive integer $k$, set
\begin{equation*}
\varphi_{k}:=(-1)^k\left(\sum_{j=2}^{k}(-1)^j z_j-\gth\right),
\end{equation*}
where $\varphi_1=\gth=\rho y\in \fA_\gth^{(B)}$.
Let $\ell\in\N$ and $\ga_j,\gb_j\in\Z_{\ge 0}$ for all $j=1,\dots,\ell$.
Then we have
\begin{equation}
\zeta_q^\tI[\varphi_{\ga_{1}+1}\gamma^{\gb_{1}}\cdots\varphi_{\ga_{\ell}+1}\gamma^{\gb_{\ell}}]
=\zeta_q^\tI[\varphi_{\gb_{\ell}+1}\gamma^{\ga_{\ell}}\cdots\varphi_{\gb_{1}+1}\gamma^{\ga_{1}}].
\label{equ:resummation-duality}
\end{equation}
\end{thm}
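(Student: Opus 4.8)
The plan is to recognize this identity as the word-algebra shadow of Corollary~\ref{cor:permuteRy} and to transport it there through the iterated Jackson-integral realization. First I would pass from abstract words to operators using embedding $(B)$ of Definition~\ref{defn:q-AlgebrafApi}, so that $a=\rho$, $b=\pi y$, $\gth=\rho y$ and $\gam=y$; then, as in Corollary~\ref{cor:qIteratedZetaAll}, the realization $\frakz_q[\,\cdot\,;t]$ is the $\Q$-linear substitution $\rho\mapsto\bfR$, $\pi\mapsto\bfP$, $y\mapsto(\text{multiplication by }\bfy)$, applied to the seed $\myone$ and evaluated at $t=1$. Because $\frakz_q$ is $\Q$-linear, it extends to the inhomogeneous elements $\varphi_k$ and to every word $\varphi_{\ga_1+1}\gam^{\gb_1}\cdots\varphi_{\ga_\ell+1}\gam^{\gb_\ell}$, all of which lie in $\fA_\tI^0$ since each block opens with $\varphi$ (a combination of $\gth$ and of $z_j$ with $j\ge2$) and $\gam=z_1-\gth\in\fA_\tI^1$.

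The crux is a single collapsing lemma: for every $k\ge1$ and every tail word $\bfw$,
\begin{equation*}
\frakz_q[\varphi_k\bfw;t]=\bfR^k\big[\bfy\cdot\frakz_q[\bfw;t]\big].
\end{equation*}
To prove it I would expand, using $z_j=\rho^{j-1}\pi y$ and $\gth=\rho y$ under $(B)$, so that the left side becomes
\begin{equation*}
(-1)^k\Big(\sum_{j=2}^{k}(-1)^j\bfR^{j-1}\bfP-\bfR\Big)\big[\bfy\,\frakz_q[\bfw;t]\big],
\end{equation*}
and then substitute $\bfP=\bfR+\idf$ (the relation $\bfR=\bfP-\idf$ of Proposition~\ref{prop:checkRBalg}). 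The sum $\sum_{j=2}^{k}(-1)^j\bfR^{j-1}(\bfR+\idf)$ telescopes: all intermediate powers $\bfR^2,\dots,\bfR^{k-1}$ cancel, leaving $\bfR+(-1)^k\bfR^k$; subtracting $\bfR$ and clearing the outer sign $(-1)^k$ yields exactly $\bfR^k$. This is the only place the precise alternating definition of $\varphi_k$ enters, and it is the step I expect to carry the real content; everything else is bookkeeping built on Corollary~\ref{cor:permuteRy}.

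Granting the lemma, I would iterate it together with $\frakz_q[\gam^{\gb}\bfw;t]=\bfy^{\gb}\,\frakz_q[\bfw;t]$ down the blocks to obtain
\begin{equation*}
\frakz_q[\varphi_{\ga_1+1}\gam^{\gb_1}\cdots\varphi_{\ga_\ell+1}\gam^{\gb_\ell};t]
=\bfR^{\ga_1+1}\bfy^{\gb_1+1}\cdots\bfR^{\ga_\ell+1}\bfy^{\gb_\ell+1}(t),
\end{equation*}
the one extra factor $\bfy$ in each block coming from the trailing $y$ inside $\varphi$. Setting $t=1$ and applying Corollary~\ref{cor:permuteRy} with the shifted exponents $\ga_r+1,\gb_r+1\in\N$ — here the hypothesis $\ga_j,\gb_j\in\Z_{\ge0}$ is precisely what makes these positive — converts the right side into $\bfR^{\gb_\ell+1}\bfy^{\ga_\ell+1}\cdots\bfR^{\gb_1+1}\bfy^{\ga_1+1}(1)$.

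Finally I would read this back through the same dictionary: each block $\bfR^{\gb_r+1}\bfy^{\ga_r+1}$ is the realization of $\varphi_{\gb_r+1}\gam^{\ga_r}$, so the right side equals $\frakz_q[\varphi_{\gb_\ell+1}\gam^{\ga_\ell}\cdots\varphi_{\gb_1+1}\gam^{\ga_1};1]$, which is exactly the claimed dual. Since all computations take place with the modified realization $\frakz_q$ used throughout the paper — which is what $\zeta_q^\tI[\,\cdot\,]$ denotes on words — this is the stated identity. I do not expect a genuine obstacle beyond the collapsing lemma; the only care needed is to define $\frakz_q[\,\cdot\,;t]$ on arbitrary (non-typical) words by the operator substitution and to note it is $\Q$-linear and agrees with the series realization of Section~\ref{sec:qStuffleVersion} on the spanning typical words, so that the linear extension is unambiguous.
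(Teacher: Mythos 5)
Your proposal is correct and follows essentially the same route as the paper: the paper's proof also works in embedding $(B)$, substitutes $\pi=\rho+\myone$ into $\varphi_k$ and telescopes to get the identity $\varphi_k=\rho^k y$ (equation \eqref{equ:phikrhoyky}, which is your collapsing lemma, just carried out at the level of words rather than at the level of the operators $\bfR$, $\bfP$, $\idf$), and then concludes by Corollary~\ref{cor:qIteratedZetaAll} and Corollary~\ref{cor:permuteRy} exactly as you do. The only difference is presentational: you transport the telescoping through the realization map before performing it, which changes nothing of substance.
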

We can use the Rota-Baxter algebra approach to give a new proof of this result.
\begin{proof}
Notice that $\gamma=y$, $z_j=\rho^{j-1} \pi y$ and $\gth=\rho y$ with the embedding $\fA_\tI^0\subset\fA_\gth^{(B)}$.
Since $\pi=\rho+\myone$, for all $k \ge 1$, we have (cf.~\eqref{equ:phikrhoykyCompare})
\begin{multline}\label{equ:phikrhoyky}
\varphi_{k}=(-1)^k\left(\sum_{j=2}^{k}(-1)^j \rho^{j-1} (\rho+\myone) y-\rho y\right)\\
= (-1)^k\left(\sum_{j=2}^{k}(-1)^j \rho^j y+\sum_{j=2}^{k}(-1)^j \rho^{j-1} y-\rho y\right)
=\rho^k y.
\end{multline}
Thus the theorem follows from Corollary~\ref{cor:permuteRy}
and Corollary~\ref{cor:qIteratedZetaAll} easily.
\end{proof}

\begin{rem}
Although not mentioned explicitly in \cite{Takeyama2013},
there is a subtle point in applying Theorem~\ref{thm:resummation-dualityI}.
Notice that in the expression of $\varphi_k$ the letter $\gth$ appears. However,
$q$-MZVs of the form such as
$\zeta_q^\I[\gth\gam z_2 \gam]=\zeta_q^\I[\rho y^2 \rho^2 \pi y]$
is not really defined. In fact, it should be denoted by
$\zeta_q^\tI[\gth\gam z_2 \gam]=\zeta_q^{(1,0,1,0)}[1,0,2,0]$
(and such values always converge by Proposition~\ref{prop:fAuxFunction}
because of the leading 1 in the auxiliary variable $\bft$).
But, suitable $\Q$-linear combinations of  \eqref{equ:resummation-duality} may lead
to identities in which only $z_k$'s appear.
Then all terms can be written as honest $\zeta_q^\I$-values.
This explains the use of two addmissible
structures $\widehat{\frakH}^0$ and $\frakH^0$ in \cite{Takeyama2013}.
For an illuminating example, see the proof of Proposition~7 of op.\ cit.
This remark also applies to Theorem~\ref{thm:resummation-dualityIV}
for the duality of type $\tIV$ $q$-MZVs.
\end{rem}

Similar relations for type \II $q$-MZVs have the most aesthetic appeal and
is the primary reason why we prefer to call it by ``duality''.
\begin{thm}
\label{thm:resummation-dualityIII}
Let $\ell\in\N$ and $\ga_j,\gb_j\in\N$ for all $j=1,\dots,\ell$.
Then we have
\begin{equation*}
\zeta_q^\II[\rho^{\ga_{1}}y^{\gb_{1}}\cdots\rho^{\ga_{\ell}}y^{\gb_{\ell}}]
=\zeta_q^\II[\rho^{\gb_{\ell}}y^{\ga_{\ell}}\cdots\rho^{\gb_{1}}y^{\ga_{1}}].
\end{equation*}
\end{thm}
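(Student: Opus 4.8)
The plan is to reduce the statement to the purely operator-theoretic permutation identity of Corollary~\ref{cor:permuteRy} through the Jackson-integral realization of type \II $q$-MZVs recorded in Corollary~\ref{cor:qIteratedZetaAll}. Throughout I work inside the embedding $\fA_\gth^{(B)}$, where $a=\rho$, $\gth=\rho y$ and $\gam=y$, so that $z'_s=\rho^s y$ and in particular $z'_0=y$. First I would check that $\bfw:=\rho^{\ga_1}y^{\gb_1}\cdots\rho^{\ga_\ell}y^{\gb_\ell}$ is genuinely type \II-admissible: peeling off one $y$ after each $\rho$-block rewrites it as $z'_{\ga_1}(z'_0)^{\gb_1-1}\cdots z'_{\ga_\ell}(z'_0)^{\gb_\ell-1}$, which lies in $z'_{\ga_1}\fA_\II^1\subset\fA_\II^0$ because $\ga_1\ge 1$ and $z'_0\in Y_\II$.

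Next I would translate $\bfw$ into its operator realization. By Corollary~\ref{cor:qIteratedZetaAll} each letter $z'_s=\rho^s y$ contributes the block $\bfR^s[\bfy\cdot(-)]$; extending this rule to the degenerate letter $z'_0=y$ gives $\bfR^0[\bfy\cdot(-)]=\bfy\cdot(-)$, i.e.\ plain multiplication by $\bfy$. Consequently the $\gb_j-1$ trailing copies of $z'_0$ in the $j$-th block coalesce with the leading $\bfy$ of that block into a single factor $\bfy^{\gb_j}$, and one obtains
\[
\frakz_q[\bfw;t]=\bfR^{\ga_1}\big[\bfy^{\gb_1}\bfR^{\ga_2}[\bfy^{\gb_2}\cdots\bfR^{\ga_\ell}[\bfy^{\gb_\ell}]\cdots]\big](t),\qquad \frakz_q^\II[\bfw]=\frakz_q[\bfw;1].
\]
This is exactly the left-hand side of \eqref{equ:permuteRyLimited}.

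The conclusion is then immediate: since $\ga_j,\gb_j\in\N$, Corollary~\ref{cor:permuteRy} rewrites this evaluation at $t=1$ as $\bfR^{\gb_\ell}\bfy^{\ga_\ell}\cdots\bfR^{\gb_1}\bfy^{\ga_1}(1)$, and reading the same dictionary backwards identifies this with $\frakz_q[\rho^{\gb_\ell}y^{\ga_\ell}\cdots\rho^{\gb_1}y^{\ga_1};1]$, the realization of the dual word. Hence $\frakz_q^\II[\bfw]=\frakz_q^\II[\rho^{\gb_\ell}y^{\ga_\ell}\cdots\rho^{\gb_1}y^{\ga_1}]$, which is the asserted duality in modified form; the normalized statement for $\zeta_q^\II$ follows by reinserting the powers of $(1-q)$ as described in the introduction. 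I do not anticipate a genuine obstacle here, as the argument is essentially a translation once the dictionary is fixed. The only points demanding care are bookkeeping ones: confirming that the repeated letters $y$ convert to honest powers $\bfy^{\gb_j}$ (rather than to nested $\bfR^0$-blocks, which would not line up with \eqref{equ:permuteRyLimited}) and that this extended realization still agrees with the $M^\II$-definition of $\frakz_q$, that the number $\ell$ of factors matches on both sides, and that the weights $\ga_1+\cdots+\ga_\ell$ and $\gb_1+\cdots+\gb_\ell$ of the two words agree if one wishes to state the identity for $\zeta_q^\II$ rather than for the modified values $\frakz_q^\II$.
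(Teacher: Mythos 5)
Your proposal is correct and follows exactly the paper's own route: the paper proves Theorem~\ref{thm:resummation-dualityIII} by citing Corollary~\ref{cor:permuteRy} together with the realization in Corollary~\ref{cor:qIteratedZetaAll}, which is precisely the dictionary you spell out (including the treatment of $z'_0=y$ as multiplication by $\bfy$). Your closing remark that the identity is really one between the modified values $\frakz_q^\II$ (since the weights $\sum\ga_j$ and $\sum\gb_j$ need not agree) is a sound reading of the paper's conventions as well.
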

\begin{proof}
This follows from Corollary~\ref{cor:permuteRy}
and Corollary~\ref{cor:qIteratedZetaAll} immediately.
\end{proof}

Of course we may apply the same idea to type \III and $\tIV$ $q$-MZVs.

\begin{thm}
\label{thm:resummation-dualityII}
Let $\ell\in\N$ and $\ga_j,\gb_j\in\N$ for all $j=1,\dots,\ell$.
Then we have
\begin{multline*}
\zeta_q^\III[(\pi-\myone)^{\ga_{1}-1}\rho y^{\gb_{1}}(\pi-\myone)^{\ga_{2}}y^{\gb_{2}}\cdots(\pi-\myone)^{\ga_{\ell}}y^{\gb_{\ell}}]\\
=\zeta_q^\III[(\pi-\myone)^{\gb_{\ell}-1}\rho y^{\ga_{\ell}}(\pi-\myone)^{\gb_{\ell-1}}y^{\ga_{\ell-1}}\cdots(\pi-\myone)^{\gb_{1}}y^{\ga_{1}}].
\end{multline*}
\end{thm}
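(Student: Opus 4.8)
The plan is to reduce the asserted identity to Corollary~\ref{cor:permuteRy}, following the same pattern as the proofs of Theorems~\ref{thm:resummation-dualityI} and~\ref{thm:resummation-dualityIII}. The key point is that $\rho=\pi-\myone$ and that, under the iterated Jackson realization of Corollary~\ref{cor:qIteratedZetaAll}, the letter $\rho$ acts as $\bfR=\bfP-\idf$ while each $y$ acts as multiplication by $\bfy$. So I would first absorb all the $(\pi-\myone)$'s into powers of $\rho$. On the left, $(\pi-\myone)^{\ga_1-1}\rho=\rho^{\ga_1}$ and each later block $(\pi-\myone)^{\ga_j}=\rho^{\ga_j}$, so the left word collapses to $\rho^{\ga_1}y^{\gb_1}\rho^{\ga_2}y^{\gb_2}\cdots\rho^{\ga_\ell}y^{\gb_\ell}$; identically, the right word becomes $\rho^{\gb_\ell}y^{\ga_\ell}\rho^{\gb_{\ell-1}}y^{\ga_{\ell-1}}\cdots\rho^{\gb_1}y^{\ga_1}$. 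Both are now pure $\rho$--$y$ words, with no $\pi$ surviving.

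Next I would pass to the operator realization. Reading a $\rho$--$y$ word from left to right under $\rho\mapsto\bfR$ and $y\mapsto(\text{multiplication by }\bfy)$ turns the left word into the composite $\bfR^{\ga_1}\bfy^{\gb_1}\cdots\bfR^{\ga_\ell}\bfy^{\gb_\ell}$ applied to $\bfy(t)$ and evaluated at $t=1$, that is, by Corollary~\ref{cor:qIteratedZetaAll}, $\frakz_q[\rho^{\ga_1}y^{\gb_1}\cdots\rho^{\ga_\ell}y^{\gb_\ell};1]=\bfR^{\ga_1}\bfy^{\gb_1}\cdots\bfR^{\ga_\ell}\bfy^{\gb_\ell}(1)$, and similarly the right word realizes as $\bfR^{\gb_\ell}\bfy^{\ga_\ell}\cdots\bfR^{\gb_1}\bfy^{\ga_1}(1)$. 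These are exactly the two sides of the identity in Corollary~\ref{cor:permuteRy} (whose content is the index swap $j_r\leftrightarrow k_{\ell+1-r}$), so that corollary closes the argument immediately.

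The one step that genuinely requires care is the legitimacy of setting $t=1$. This is precisely the reason the representation of Theorem~\ref{thm:qIteratedZetaIIICEM} was modified, replacing a leading $\bfP^{s_1}$ by $\bfP^{s_1-1}\bfR$ so that evaluation at $t=1$ is admissible; here the leading block is $\rho^{\ga_1}=\bfR^{\ga_1}$ with $\ga_1\ge1$, so the explicit convergent series in Theorem~\ref{thm:ResumIdRBform} shows the value at $t=1$ is well defined and no $\bfD$ (equivalently, no $\gd$) is ever needed. I would also flag that, since the number of $\rho$'s in the left word is $\sum_j\ga_j$ whereas in the right word it is $\sum_j\gb_j$, this is an exact identity for the modified values $\frakz_q$ (the convention used throughout) but would acquire compensating powers of $(1-q)$ if one insisted on the normalized $\zeta_q$. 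The only remaining bookkeeping---matching the block subscripts of the right-hand word to the reversed pattern produced by Corollary~\ref{cor:permuteRy}---is routine, and is the sole place where an off-by-one in the indices could slip in.
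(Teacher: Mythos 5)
Your proof is correct and takes essentially the same route as the paper's one-line argument: the paper likewise collapses every $(\pi-\myone)$ into $\rho$ and then cites Corollary~\ref{cor:permuteRy} together with the realization in Corollary~\ref{cor:qIteratedZetaAll}. Your added care about the legitimacy of evaluating at $t=1$ (the reason the leading block is $\bfR^{\ga_1}$ rather than $\bfP^{\ga_1}$) and about the $(1-q)$-normalization of $\frakz_q$ versus $\zeta_q$ simply makes explicit what the paper leaves implicit.
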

\begin{proof}
Since $\rho=\pi-\myone$ this follows from Corollary~\ref{cor:permuteRy}
and Corollary~\ref{cor:qIteratedZetaAll} easily.
\end{proof}

\begin{thm}
\label{thm:resummation-dualityIV}
Let $\ell\in\N$ and $\ga_j,\gb_j\in\N$ for all $j=1,\dots,\ell$.
Then we have
\begin{equation*}
\zeta_q^\tIV[\varphi_{\ga_1} y^{\gb_{1}-1}\rho^{\ga_{2}}y^{\gb_{2}}\cdots\rho^{\ga_{\ell}}y^{\gb_{\ell}}]
=\zeta_q^\tIV[\varphi_{\gb_{\ell}} y^{\ga_{\ell}-1}\rho^{\gb_{\ell-1}}y^{\ga_{\ell-1}}\cdots\rho^{\gb_{1}}y^{\ga_{1}}].
\end{equation*}
Here $\varphi_1=\theta=\rho y\in\fA_\gth^{(B)}$.
\end{thm}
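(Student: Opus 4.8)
The plan is to follow the pattern of the proofs of Theorems~\ref{thm:resummation-dualityI}--\ref{thm:resummation-dualityII} and reduce the assertion to the operator-level reversal identity of Corollary~\ref{cor:permuteRy}. The first step is to recall from \eqref{equ:phikrhoyky} that, under the embedding $\fA_\tIV^0\subset\fA_\gth^{(B)}$, one has $\varphi_k=\rho^k y$ for every $k\ge1$, with $\varphi_1=\gth=\rho y$. Substituting this identity collapses the leading factor $\varphi_{\ga_1}y^{\gb_1-1}$ on the left into $\rho^{\ga_1}y^{\gb_1}$ and the leading factor $\varphi_{\gb_\ell}y^{\ga_\ell-1}$ on the right into $\rho^{\gb_\ell}y^{\ga_\ell}$, so that the two sides become the words
\begin{equation*}
\rho^{\ga_1}y^{\gb_1}\rho^{\ga_2}y^{\gb_2}\cdots\rho^{\ga_\ell}y^{\gb_\ell}
\qquad\text{and}\qquad
\rho^{\gb_\ell}y^{\ga_\ell}\rho^{\gb_{\ell-1}}y^{\ga_{\ell-1}}\cdots\rho^{\gb_1}y^{\ga_1},
\end{equation*}
which are precisely the reversal-with-swap of one another.

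The second step is to pass to the iterated Jackson-integral side via the regularized form of Corollary~\ref{cor:qIteratedZetaAll} for type $\tIV$. The crucial point is that, because each $\varphi_k$ is a pure $\rho$-power word carrying no $\pi$, the leading block here realizes as the pure operator string $\bfR^{\ga_1}\bfy^{\gb_1}$ rather than as the $\bfP$-containing head $\bfR^{s-1}\bfP$ that an ordinary type \IV first letter $z_s$ would contribute; this is exactly the replacement of a leading $\rho^{s-1}\pi$ by a single $\rho$ recorded in the remark after Corollary~\ref{cor:qIteratedZetaAll}. Every later block $\rho^{\ga_j}y^{\gb_j}$ already realizes as $\bfR^{\ga_j}\bfy^{\gb_j}$, exactly as in the type \II case. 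Hence, by the definition $\frakz_q[\bfw]=\frakz_q[\bfw;1]$, the realizations of the two words equal the operator values
\begin{equation*}
\bfR^{\ga_1}\bfy^{\gb_1}\cdots\bfR^{\ga_\ell}\bfy^{\gb_\ell}(1)
\qquad\text{and}\qquad
\bfR^{\gb_\ell}\bfy^{\ga_\ell}\cdots\bfR^{\gb_1}\bfy^{\ga_1}(1),
\end{equation*}
and Corollary~\ref{cor:permuteRy} gives their equality, which is the theorem.

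The hard part will be the middle step: confirming rigorously that the $\varphi$-regularized leading block truly contributes a pure $\bfR^{\ga_1}$ and not the $\bfR^{s-1}\bfP$ of a genuine type \IV head. I would pin this down by checking the base case $\varphi_1=\gth$ directly against the realization map $M^{1,\tIV}_k(\gth)=q^k/(1-q^k)$, which matches the $t^k$-coefficient of $\bfR[\bfy](t)$, and then extending to $\varphi_k=\rho^k y\mapsto\bfR^k\bfy$ by applying the power rule \eqref{equ:RpowerOntPower} termwise to $\bfy(t)=\sum_{j\ge1}t^j$. One should also note that the trailing powers $y^{\gb_j-1}$ cause no difficulty, since $y=\gam$ merely inserts extra weight-zero $\bfy$-factors; once the operator identification is secured, the reversal follows verbatim as in the type \II proof of Theorem~\ref{thm:resummation-dualityIII}.
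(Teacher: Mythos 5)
Your proof is correct and is essentially the paper's own argument: the paper's proof consists precisely of citing \eqref{equ:phikrhoyky} (i.e.\ $\varphi_k=\rho^k y$), Corollary~\ref{cor:qIteratedZetaAll} for the operator realization of the words, and Corollary~\ref{cor:permuteRy} for the reversal at $t=1$. The point you flag as the ``hard part'' --- that the $\varphi$-regularized leading block realizes as a pure $\bfR^{\ga_1}$ rather than $\bfR^{s-1}\bfP$ --- is exactly what the remark at the end of Corollary~\ref{cor:qIteratedZetaAll} supplies, so your verification of it is a welcome elaboration rather than a departure.
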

\begin{proof}
This follows from \eqref{equ:phikrhoyky}, Corollary~\ref{cor:permuteRy}
and Corollary~\ref{cor:qIteratedZetaAll}.
\end{proof}

\section{The general type G {\lowercase {$q$}}-MZVs} \label{sec:TypeG}
All of the $q$-MZVs of type $\tI$, \II, \III and $\tIV$ considered
in the above are some special forms of
the $q$-MZVs $\zeta_q^{(t_1,\dots, t_d)}[s_1,\dots, s_d]$ where
$1\le t_1\le s_1$, $0\le t_j\le s_j$ for all $j\ge 2$, all of which are convergent
by Proposition~\ref{prop:fAuxFunction}. We call these \emph{type G} $q$-MZVs.
Similar to the first four types, we may use words to encode these
values according to Theorem~\ref{thm:qIteratedZetaGen1} by setting $a_j=t_j$ there. Namely,
we can define
\begin{equation*}
\frakz_q[\rho^{a_1}\pi^{b_1} y \cdots \rho^{a_d}\pi^{b_d} y;t]
=\bfR^{a_1}\big[\bfP^{b_1}[\bfy\bfR^{a_2}[\bfP^{b_2}[\bfy\cdots
    \bfR^{a_d}[\bfP^{b_d}[\bfy]]\cdots]]]\big](t).
\end{equation*}
Then we have
\begin{equation*}
\frakz_q[\bfw^\bft(\bfs)]
:=\frakz_q[\bfw^\bft(\bfs);\, 1]
=\frakz_q^{(t_1,\dots, t_d)}[s_1,\dots, s_d],
\end{equation*}
where $\bfw^\bft(\bfs)=\rho^{t_1}\pi^{s_1-t_1} y \cdots \rho^{t_d}\pi^{s_d-t_d} y\in X_\pi^*$.
The shuffle product structure are reflected by $(X_\pi^*,\sha)$ where
the $\sha$ is defined by \eqref{shuffle-y}, \eqref{shuffle-pipi}, \eqref{shuffle-rhorho}
and \eqref{shuffle-rhopi}.

We observe that there is often more than one way to express a type G $q$-MZV
using words because of the relation $\pi=\rho+\myone.$ For example, using
the relations
\begin{equation*}
\pi^2 \rho y=\pi\rho^2 y+\pi\rho y=\rho^3 y+2\rho^2 y+\rho y
\end{equation*}
we get immediately the relations
\begin{equation*}
\zeta_q^{(1)}[3]=\zeta_q^{(2)}[3]+\zeta_q^{(1)}[2]
=\zeta_q^{(3)}[3]+2\zeta_q^{(2)}[2]+\zeta_q^{(1)}[1].
\end{equation*}
We call all such relations $\bfP$-$\bfR$ relations.

\begin{prop}
For all $\bfu, \bfv\in\fA^0_\GG$, we have $\bfu\sha \bfv\in\fA^0_\GG$.
\end{prop}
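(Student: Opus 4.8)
The plan is to reduce the statement to two structural facts about the letters $\rho,\pi,y$: that $\rho$ and $\pi$ commute in $\fA_\pi$ (both are polynomials in $\pi$, since $\rho=\pi-\myone$), and that $\sha$ preserves the property of a word ending in the letter $y$. First I would pin down the description of $\fA^0_\GG$. By definition it is the span of the admissible type G words $\bfw^\bft(\bfs)=\rho^{t_1}\pi^{s_1-t_1}y\cdots\rho^{t_d}\pi^{s_d-t_d}y$, whose canonical form is $\rho^{a_1}\pi^{b_1}y\cdots\rho^{a_d}\pi^{b_d}y$ with $a_1=t_1\ge1$ and all other exponents $\ge0$ (convergence is then automatic by Proposition~\ref{prop:fAuxFunction}). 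Because $\rho$ and $\pi$ commute, each block between two $y$'s is an arbitrary element of $\Q[\pi]$, so the \emph{only} constraint is the leading one, $a_1\ge1$. Pulling a single factor of $\rho$ to the front then yields the clean description $\fA^0_\GG=\rho\,V$, where $V$ denotes the span in $\Q\langle\pi,y\rangle$ of all words ending in $y$.

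Granting this, the proposition reduces to one main claim: that $(V,\sha)$ is closed, i.e. the shuffle of two words ending in $y$ is a $\Q$-linear combination of words ending in $y$. Combined with the trivial observation that prepending $\rho$ to a word ending in $y$ again ends in $y$ (so $\rho\bfu'',\rho\bfv''\in V$ whenever $\bfu'',\bfv''\in V$), this finishes the proof as follows. Given $\bfu,\bfv\in\fA^0_\GG$, write $\bfu=\rho\bfu''$ and $\bfv=\rho\bfv''$ with $\bfu'',\bfv''\in V$ (possible since $\fA^0_\GG=\rho V$), and apply the shuffle identity \eqref{shuffle-rhorho}:
\[
\rho\bfu''\sha\rho\bfv''=\rho(\bfu''\sha\rho\bfv'')+\rho(\rho\bfu''\sha\bfv'')+\rho(\bfu''\sha\bfv'').
\]
Each of the three arguments is a shuffle of two elements of $V$, hence lies in $V$ by the main claim; therefore the whole expression lies in $\rho V=\fA^0_\GG$. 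The decisive feature is that all three terms of \eqref{shuffle-rhorho} retain exactly one leading $\rho$, which is precisely what encodes the admissibility condition $a_1\ge1$.

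The main work, and the only real obstacle, is the closure of $V$ under $\sha$. I would prove it by induction on the combined length of the two words, casing on their leading letters and using only the four defining rules \eqref{shuffle-y}, \eqref{shuffle-pipi}, \eqref{shuffle-rhorho}, \eqref{shuffle-rhopi} (no $\gd$ ever arises, so these suffice). The point to watch is that since both inputs end in $y$, and hence do not end in $\pi$ or $\rho$, after stripping a leading letter the remaining tails are still nonempty and end in $y$; thus every inner shuffle produced by the recursion is a shuffle of nonempty words ending in $y$, which by the inductive hypothesis is a combination of nonempty words ending in $y$, and prepending any letter $\pi$, $\rho$, or $y$ preserves the trailing $y$. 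The base case $\myone\sha\bfp=\bfp$ is immediate. This argument runs parallel to Proposition~\ref{equ:shuClosedForAll} but is genuinely simpler: for type G no renormalization of the inter-$y$ blocks into $\rho^a\pi^b$ shape is required, since $\rho$ and $\pi$ commute and the blocks are unconstrained, so the entire content is the joint preservation of the leading $\rho$ and the trailing $y$.
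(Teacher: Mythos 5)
Your proof is correct and follows essentially the same route as the paper's: both rest on the structural characterization of $\fA^0_\GG$ (words ending in $y$ with a guaranteed $\rho$ in front of the first $y$, equivalently your $\rho V$ via the $\bfP$-$\bfR$ relation $\pi=\rho+\myone$) followed by closure of the shuffle recursion, with \eqref{shuffle-rhorho} preserving the leading $\rho$. The only difference is one of exposition: where the paper says ``it is not hard to see,'' you make the induction on combined word length and the case analysis on leading letters explicit, which is a welcome filling-in of detail rather than a new idea.
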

\begin{proof}
Notice that admissible words in $\fA^0_\GG$ must end with $y$
and have at least one $\rho$ before the first $y$.
Moreover, the converse is also true. This is rather
straight-forward if we use the $\bfP$-$\bfR$ relations repeatedly
to get rid of all the $\pi$'s.

Now, by using the definition of $\sha$ it is not hard to see that $\bfu\sha \bfv$
ends with $y$ and and has at least one $\rho$ before the first $y$
if both $\bfu$ and $\bfv$ are admissible. So $\bfu\sha \bfv\in\fA^0_\GG$
and the proposition is proved.
\end{proof}

To define the stuffle product we let
$$Y_\GG=\{z_{t,s} \mid t,s\in \Z_{\ge 0}, t\le s \},$$
and let $\fA_\GG$ be the the noncommutative polynomial $\Q$-algebra of words of $Y_\GG^*$ built
on the alphabet $Y_\GG$. Define the type $\GG$-admissible words as those in
\begin{equation*}
 \fA^0_\GG= \bigcup_{1\le t\le s} z_{t,s}\fA_\GG.
\end{equation*}
We can regard $\fA_\GG$ as a subalgebra of $X_\pi^*$ by setting $z_{t,s}=\rho^t\pi^{s-t} y$.
Then stuffle product $*_\GG$ on $\fA^0_\GG$ can be defined
inductively as follows. For any words $\bfu,\bfv\in\fA^0_\GG$ and
letters $z_{t,s},z_{t',s'}\in Y_\GG$ with $1\le t\le s$ and $1\le t'\le s'$
we set $\myone*_\GG \bfu=\bfu=\bfu*_\GG\myone$ and
\begin{equation*}
 (z_{t,s} \bfu)*_\GG(z_{t',s'}\bfv)=z_{t,s}(\bfu*_\GG z_{t',s'}\bfv)
    +z_{t',s'}(z_{t,s}\bfu*_\GG\bfv)+z_{t+t',s+s'} (\bfu*_\GG \bfv).
\end{equation*}
It is easy to show that $(\fA^0_\GG,*_\GG)$ is a commutative and associative algebra.

We leave the proof of the following theorems to the interested readers.
The first result clearly provides the DBSFs of type G $q$-MZVs.
\begin{thm}\label{thm:dbsfG}
For any $\bfu,\bfv\in\fA_\GG^0\subset X_\pi^*$ we have
\begin{equation}\label{equ:dbsfG}
\frakz_{q}[\bfu*_\GG \bfv]=\frakz_{q}[\bfu \sha \bfv]=\frakz_{q}[\bfu]\frakz_{q}[\bfv].
\end{equation}
\end{thm}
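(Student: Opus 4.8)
The plan is to prove the two equalities separately, each mirroring a result already established for the four basic types. Writing $M_k(z_{t,s}):=q^{kt}/(1-q^k)^s$, the realization map unwinds as $\frakz_q[z_{t_1,s_1}\cdots z_{t_d,s_d}]=\sum_{k_1>\cdots>k_d>0}\prod_{j=1}^d M_{k_j}(z_{t_j,s_j})$, which is exactly $\frakz_q^{(t_1,\dots,t_d)}[s_1,\dots,s_d]$. The stuffle equality $\frakz_q[\bfu*_\GG\bfv]=\frakz_q[\bfu]\frakz_q[\bfv]$ I would handle as in Theorem~\ref{thm:stuffleForAll}, and the shuffle equality $\frakz_q[\bfu\sha\bfv]=\frakz_q[\bfu]\frakz_q[\bfv]$ as in Theorem~\ref{thm:qZetaShuffleAll}.

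For the stuffle equality I would induct on the total depth, splitting the product of the two defining series by comparing the outermost indices $k_1$ and $k_1'$. The cases $k_1>k_1'$ and $k_1<k_1'$ reproduce the first two terms $z_{t,s}(\bfu*_\GG z_{t',s'}\bfv)$ and $z_{t',s'}(z_{t,s}\bfu*_\GG\bfv)$ in the recursion defining $*_\GG$, while the diagonal case $k_1=k_1'$ is controlled by the single identity
\begin{equation*}
M_k(z_{t,s})\,M_k(z_{t',s'})=\frac{q^{k(t+t')}}{(1-q^k)^{s+s'}}=M_k(z_{t+t',s+s'}),
\end{equation*}
which is precisely the coefficient attached to $z_{t+t',s+s'}(\bfu*_\GG\bfv)$. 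The induction then closes verbatim as in the proof of Theorem~\ref{thm:stuffleForAll}.

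For the shuffle equality I would work first with the formal one-variable function $\frakz_q[\,\cdot\,;t]$, reading each letter $\pi$ as the operator $\bfP$, each $\rho$ as $\bfR$, and each $y$ as multiplication by $\bfy$. Since every word of $\fA_\GG^0$ ends in $y$, this assignment sends it --- and every word produced from it by the recursion \eqref{shuffle-y}, \eqref{shuffle-pipi}, \eqref{shuffle-rhorho}, \eqref{shuffle-rhopi} --- to a well-defined element of $t\Q[\![t,q]\!]$. The four defining relations of $\sha$ are, term by term, the Rota-Baxter identities \eqref{equ:RisRBalg}, \eqref{equ:PisRBalg} and \eqref{equ:PRproduct} of Proposition~\ref{prop:checkRBalg}, together with the trivial factoring of $\bfy$ in \eqref{shuffle-y}. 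Hence $\frakz_q[\,\cdot\,;t]$ is an algebra homomorphism into $t\Q[\![t,q]\!]$, so that $\frakz_q[\bfu\sha\bfv;t]=\frakz_q[\bfu;t]\,\frakz_q[\bfv;t]$ holds as a formal identity in $t$. Specializing at $t=1$ and invoking $\frakz_q[\bfw]=\frakz_q[\bfw;1]$ from Corollary~\ref{cor:qIteratedZetaAll} then gives the shuffle equality.

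The one delicate point, and the step I expect to be the main obstacle, is this specialization at $t=1$: the homomorphism property is a priori only an identity of formal power series, and transporting it to $t=1$ requires convergence of every term. This is legitimate precisely because $\bfu,\bfv\in\fA_\GG^0$ are admissible and, by the closure proposition just established, so is each word occurring in $\bfu\sha\bfv$; admissibility forces a leading exponent $t_1\ge 1$, hence a genuine $\bfR$ in front and a decaying factor $q^{k_1(\cdots)}$ in the numerator, placing every series in the convergence domain of Proposition~\ref{prop:fAuxFunction}. The ambiguity $\pi=\rho+\myone$ causes no trouble, since it only permutes among admissible representatives of the same value.
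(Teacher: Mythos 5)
Your proposal is correct and takes exactly the route the paper intends: the paper in fact leaves this theorem to the reader, stating that it follows the pattern of Theorem~\ref{thm:stuffleForAll} (stuffle part, via the multiplicativity $M_k(z_{t,s})M_k(z_{t',s'})=M_k(z_{t+t',s+s'})$ of the realization maps) and Theorem~\ref{thm:qZetaShuffleAll} (shuffle part, via the Rota--Baxter identities of Proposition~\ref{prop:checkRBalg} applied to $\frakz_q[\,\cdot\,;t]$ and then specialization at $t=1$ using admissibility), which is precisely what you do. Your explicit justification of the $t=1$ specialization through the closure of $\fA^0_\GG$ under $\sha$ and the convergence criterion of Proposition~\ref{prop:fAuxFunction} supplies a detail the paper glosses over.
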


The duality relations are given in the cleanest form by Theorem~\ref{thm:resummation-dualityII}
which can be translated into the following.
\begin{thm}
\label{thm:resummation-dualityG}
Let $\ell\in\N$ and $\ga_j,\gb_j\in\N$ for all $j=1,\dots,\ell$.
Set
\begin{align*}
\bfs=&\, (\ga_1,0^{\gb_{1}-1},\ga_2,0^{\gb_{2}-1},\dots,\ga_\ell,0^{\gb_{\ell}-1}),\\
\bfs^\vee=&\, (\gb_\ell,0^{\ga_{\ell}-1},\gb_{\ell-1},0^{\ga_{\ell-1}-1},\dots,\gb_1,0^{\ga_{1}-1}).
\end{align*}
Then we have
\begin{equation*}
\zeta_q^{\bfs}[\bfs]=\zeta_q^{\bfs^\vee}[\bfs^\vee].
\end{equation*}
\end{thm}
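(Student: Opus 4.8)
The plan is to observe that the asserted identity is just the type \II duality in disguise, and to prove it by the same operator symmetry, Corollary~\ref{cor:permuteRy}, that powers all the earlier duality proofs. The key preliminary remark is that a type G value whose auxiliary vector equals its argument vector is literally a type \II value: by definition $\zeta_q^{\bfs}[\bfs]=\zeta_q^{(s_1,\dots,s_d)}[s_1,\dots,s_d]=\zeta_q^\II[\bfs]$, and similarly $\zeta_q^{\bfs^\vee}[\bfs^\vee]=\zeta_q^\II[\bfs^\vee]$. Thus the whole content is a dictionary between the compositions $\bfs,\bfs^\vee$ (written with interior strings of zeros) and the operator words of Corollary~\ref{cor:permuteRy}; once the dictionary is set up the result is immediate, and it coincides with the type \II duality of Theorem~\ref{thm:resummation-dualityIII}.

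To build the dictionary I would apply Theorem~\ref{thm:qIteratedZetaGen1} with $\bfa=\bfs$, so that every factor $\bfP^{s_j-a_j}=\bfP^0=\idf$ disappears; this is legitimate because $s_1=\ga_1\ge 1$ makes all partial sums positive, so the admissibility hypothesis $a_1+\dots+a_j>0$ holds. This gives
\begin{equation*}
\frakz_q^{\bfs}[\bfs]=\bfR^{s_1}\big[\bfy\,\bfR^{s_2}[\bfy\cdots\bfR^{s_d}[\bfy]\cdots]\big](1).
\end{equation*}
Feeding in the explicit shape $\bfs=(\ga_1,0^{\gb_1-1},\dots,\ga_\ell,0^{\gb_\ell-1})$ and using $\bfR^0=\idf$, each entry $\ga_j$ together with its $\gb_j-1$ trailing zeros collapses into a single $\bfR^{\ga_j}$ followed by $1+(\gb_j-1)=\gb_j$ successive multiplications by $\bfy$. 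Hence the operator string becomes exactly $\bfR^{\ga_1}\bfy^{\gb_1}\cdots\bfR^{\ga_\ell}\bfy^{\gb_\ell}$ evaluated at $t=1$, i.e.\ the left-hand side of Corollary~\ref{cor:permuteRy}.

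Applying Corollary~\ref{cor:permuteRy} then interchanges the two families of exponents to produce $\bfR^{\gb_\ell}\bfy^{\ga_\ell}\cdots\bfR^{\gb_1}\bfy^{\ga_1}(1)$, and reading the dictionary backwards identifies this with $\frakz_q^{\bfs^\vee}[\bfs^\vee]$, which finishes the proof. I do not expect a substantive obstacle: this is a transcription of the earlier duality arguments. The one point that genuinely needs care is the bookkeeping of the zeros --- equivalently, recalling that the degenerate letter $z'_0=y$ carries argument $0$, so that the $\gb_j$ copies of $y$ account for the single $y$ closing the block $z'_{\ga_j}=\rho^{\ga_j}y$ plus the $\gb_j-1$ interior zeros. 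I would also flag, as the cited dualities already do, that the identity is cleanest at the level of the modified values $\frakz_q$, because $\bfs$ and $\bfs^\vee$ carry the differing weights $\sum_j\ga_j$ and $\sum_j\gb_j$; translating to the unmodified $\zeta_q$ simply requires inserting the appropriate powers of $(1-q)$ exactly as in Theorem~\ref{thm:resummation-dualityIII}.
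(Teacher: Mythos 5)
Your proposal is correct and takes essentially the same route as the paper, which presents this theorem precisely as a translation of the type \II duality and leaves the details to the reader: you supply exactly that translation, using Theorem~\ref{thm:qIteratedZetaGen1} (with $\bfa=\bfs$, so every $\bfP^{0}$ disappears) to collapse each block $\ga_j,0^{\gb_j-1}$ into $\bfR^{\ga_j}\bfy^{\gb_j}$, and then invoking Corollary~\ref{cor:permuteRy}. Your two flagged subtleties --- the bookkeeping of the zero entries and the fact that the identity lives at the level of the modified values $\frakz_q$ since $|\bfs|\ne|\bfs^\vee|$ --- are handled consistently with the paper's conventions.
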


\section{Numerical data}\label{sec:data}
In this last section, we compute the $\Q$-linear relations among various types of
$q$-MZVs of small weight by using the DBSFs and the duality relations. Most of the computation is
carried out with the computer algebra system MAPLE, version 16. My laptop has
Intel Core i7 with CPU speed at 2.4GHz and 16GB RAM.

For each type $\gt$ we will define the set of type $\gt$-admissible words $\sfW^\gt_{\le w}$
of weight and depth both bounded by $w$. This is necessary since we allow 0 in some
types of $q$-MZVs. We have to control the number of $0$'s occurring as
arguments in $q$-MZVs since otherwise the dimensions to be considered will become infinite.
Another reason that the depth has to be bounded is because
the duality essentially swaps the depth and the weight.

We denote by $\sfZ^\gt_{\le w}$ the $\Q$-space generated by $q$-MZVs of type $\gt$
corresponding to the type $\gt$-admissible words $\sfW^\gt_{\le w}$,
$\DS^\gt_{\le w}$ the space generated by the DBSFs, and
$\DU^\gt_{\le w}$ the space generated by the duality relations. Hence
$\DU^\gt_{\le w}\setminus \DS^\gt_{\le w}$ gives the duality relations that are
not contained in $\DS^\gt_{\le w}$.

\textbf{Type I}. We have seen that it is necessary to consider
$q$-MZVs of the form $\frakz_q^{\bft}(\bfs)$ with $(t_j,s_j)=(s_j-1,s_j)$ or
$(t_j,s_j)=(1,1)$. The latter case corresponds to the words containing the letter $\theta$.
We have called all of these values \emph{type $\tI$ $q$-MZVs}.

\begin{prop}
Let $F_{-1}=0$, $F_0=1$, $F_1=1$, ... be the Fibonacci sequence. Then
for all $w\ge 1$ we have
\begin{equation*}
\sharp\sfW^\I_{\le w}=2^{w-1}-1 \qquad\text{and}
\qquad \sharp\sfW^\tI_{\le w}=F_{2w}-1.
\end{equation*}
\end{prop}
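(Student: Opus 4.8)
The plan is to realize both cardinalities as enumerations of \emph{admissible words graded by weight}, and then to evaluate the resulting counting series. First I would record the two word sets explicitly. For type \I the admissible words are the monomials $z_{s_1}\cdots z_{s_d}$ with $s_1\ge 2$ and $s_j\ge 1$ for $j\ge 2$ (no $\gth$ occurs); these are exactly the type \I-admissible compositions of Definition~\ref{defn:admissibleComposition}, since for a positive composition the conditions $s_1+\dots+s_j>j$ of Corollary~\ref{cor:convergenceDomain}(i) reduce to $s_1\ge 2$. For type $\tI$ the admissible words are the nonempty words on the alphabet $\{\gth\}\cup\{z_s\}_{s\ge 1}$ whose leading letter is $\gth$ or some $z_s$ with $s\ge 2$, i.e.\ the monomials of $\fA_\tI^0$ from Definition~\ref{defn:q-AlgebrafAgth}. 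I assign the grading $\wt(z_s)=s$ and $\wt(\gth)=1$, with depth equal to the number of letters. Since every letter has weight at least $1$, the depth of any word is at most its weight; hence the requirement ``weight and depth both $\le w$'' collapses to ``weight $\le w$,'' and in both cases $\sfW^\gt_{\le w}$ is simply the set of admissible words of weight at most $w$.

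For type \I, counting by weight is elementary. A word of weight exactly $n$ is a composition $(s_1,\dots,s_d)$ of $n$ with $s_1\ge 2$; since there are $2^{n-1}$ compositions of $n$ in all, and those with $s_1=1$ biject (by deleting the leading $1$) with the $2^{n-2}$ compositions of $n-1$, there are $2^{n-1}-2^{n-2}=2^{n-2}$ admissible words of weight $n$ for $n\ge 2$ and none of weight $1$. Summing the geometric series gives
\begin{equation*}
\sharp\sfW^\I_{\le w}=\sum_{n=2}^{w}2^{n-2}=2^{w-1}-1 .
\end{equation*}

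For type $\tI$ I would use generating functions. Each \emph{tail} letter may be $\gth$ or $z_1$ (both of weight $1$) or $z_s$ with $s\ge 2$, so it has weight series $L(x)=2x+\sum_{s\ge 2}x^s=(2x-x^2)/(1-x)$; a valid \emph{leading} letter is $\gth$ or $z_s$ with $s\ge 2$, with series $G(x)=x+\sum_{s\ge 2}x^s=x/(1-x)$. An admissible word is one leading letter followed by an arbitrary (possibly empty) sequence of tail letters, so, writing $a_n$ for the number of admissible type $\tI$ words of weight exactly $n$,
\begin{equation*}
\sum_{n\ge 0}a_nx^n=G(x)\cdot\frac{1}{1-L(x)}=\frac{x}{1-x}\cdot\frac{1-x}{1-3x+x^2}=\frac{x}{1-3x+x^2}.
\end{equation*}
From $(1-3x+x^2)\sum a_nx^n=x$ I read off $a_0=0$, $a_1=1$ and $a_n=3a_{n-1}-a_{n-2}$ for $n\ge 2$. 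The shifted Fibonacci numbers $F_{2n-1}$ obey the same recurrence and the same initial data ($F_{-1}=0$, $F_1=1$), so $a_n=F_{2n-1}$ for all $n\ge 1$. Finally, using $F_{2n-1}=F_{2n}-F_{2n-2}$ the sum telescopes:
\begin{equation*}
\sharp\sfW^\tI_{\le w}=\sum_{n=1}^{w}a_n=\sum_{n=1}^{w}\bigl(F_{2n}-F_{2n-2}\bigr)=F_{2w}-F_0=F_{2w}-1 .
\end{equation*}

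The content is combinatorial bookkeeping rather than deep mathematics, and the one point demanding care is precisely where the two formulas diverge: $\gth$ and $z_1$ must be treated as \emph{distinct} weight-$1$ letters, both permitted in interior positions, while $z_1$ is barred from the leading position. It is exactly this extra interior weight-$1$ choice — the coefficient $2$ rather than $1$ of $x$ in $L(x)$ — that turns the factor $1-L(x)$ into $(1-3x+x^2)/(1-x)$ instead of the type \I value $(1-2x)/(1-x)$, thereby replacing powers of $2$ by even-indexed Fibonacci numbers. Before concluding I would double-check the identification $a_n=F_{2n-1}$ and the shifted convention $F_{-1}=0$, $F_0=1$ against the small cases $a_1=1$, $a_2=3$, $a_3=8$.
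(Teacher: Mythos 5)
Your proof is correct, and for type $\tI$ it takes a genuinely different route from the paper's. The type \I{} part is essentially the paper's own argument (count the $\gth$-free compositions with $s_1\ge 2$ weight by weight, getting $2^{n-2}$ in weight $n$, and sum the geometric series; the paper also sketches an equivalent bijection with words in $\{\rho,\pi\}$). For type $\tI$, the paper classifies admissible words by their leading letter: with $a_n$ (resp.\ $b_n$) the number of weight-$n$ words beginning with $\gth$ (resp.\ with some $z_k$, $k\ge 2$), it derives the coupled recursion $a_{n+1}=2a_n+b_n$, $b_{n+1}=a_n+b_n$, $a_1=1$, $b_1=0$, identifies $a_n=F_{2n-2}$, $b_n=F_{2n-3}$, and sums $\sum_{n=0}^{2w-2}F_n=F_{2w}-1$. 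You instead compute the rational generating function $x/(1-3x+x^2)$ of the language (admissible leading letter)(arbitrary letter)$^*$ and read off the single three-term recurrence $a_n=3a_{n-1}-a_{n-2}$, which is exactly what one obtains by eliminating $b_n$ from the paper's system; both treatments thus agree that the weight-$n$ count is $F_{2n-1}=F_{2n-2}+F_{2n-3}$, and your telescoping sum replaces the paper's summation of all Fibonacci numbers. What your method buys: it is mechanical, it isolates the one essential point (two interior weight-one letters $\gth,z_1$ versus only one admissible weight-one leading letter, i.e.\ the coefficient $2x$ in $L(x)$ versus $x$ in $G(x)$), it gives the per-weight closed form $F_{2n-1}$ explicitly, and it would extend verbatim to other alphabets. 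What the paper's recursion buys: it is phrased directly as operations on the argument pairs $(t_j,s_j)$ of the $q$-MZVs, so the role of the two kinds of regularized leading terms stays conceptually visible. Two observations in your write-up are worth keeping: the remark that every letter has weight $\ge 1$, so the bound ``weight and depth both $\le w$'' collapses to ``weight $\le w$'' (the paper uses this silently), and your reading of $\sfW^\I_{\le w}$ as consisting of $\gth$-free words only, which is what the formula $2^{w-1}-1$ and the paper's tables require, even though the displayed definition of $\fA_\I^0$ (via $z_k\fA_\tI^1$) would literally permit $\gth$ in non-leading positions.
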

\begin{proof}
The first equation follows from the same argument as that for MZVs. It is given
by the number of integer solutions to the inequality
\begin{equation*}
    s_1+\dots+s_d\le w,  \quad d\ge 1, s_1\ge 2, s_2,\dots,s_d\ge 1.
\end{equation*}
Or, more directly and perhaps much easier, we can count the corresponding
admissible words. Clearly,
there are $2^{w-1}$ ways to form a word consisting of $w-1$ letters where
the letters can be either $\rho$ or $\pi$. Let $S_w$ be the set of such words.
We now show that there is a one-to-one correspondence between $S_w$ and
the set $A_w$ of admissible word of weight $w$.
First, from each word $\bfw\in S_w$ we can obtain a word in $A_w$
by inserting a letter $y$ after each $\pi$ in $\bfw$ and attach $\pi y$ at the end.
On the other hand, for each word in $A_w$ we may chop off
the ending $\pi y$ and removing all the $y$'s to get a word in $\bfw$.
This establishes the one-to-one correspondence.

We now prove the second equation.
Let $a_n$ (resp.~$b_n$) be the number of type $\tI$ $q$-MZVs of weight $n$ beginning
with $(t_j,s_j)=(1,1)$ (resp.~$(t_j,s_j)=(s_j-1,s_j)$). Let's call the two different
beginnings 1-initial and 2-initial, respectively. Then $a_1=1$ and $b_1=0$.
Now to produce weight $n+1$ 1-initials one can attach  $(t,s)=(1,1)$ to
the beginning of any weight $n$ type $\tI$ $q$-MZVs. Moreover, one can
change the beginning of any weight $n$ 1-initial to $(t,s)=(0,1)$ and then
attach  $(t,s)=(1,1)$. Thus $a_{n+1}=2a_n+b_n$. To obtain 2-initials of weight $n+1$
one either changes a 1-initial of weight $n$ to begin with $(t,s)=(1,2)$ or changes a
2-initial value of weight $n$ to begin with $(s,s+1)$ from $(s-1,s)$ (i.e., increases
the first argument by 1). Hence $b_{n+1}=a_n+b_n$.
Thus it is easy to see that $a_n=F_{2n-2}$ and $b_n=F_{2n-3}$ for all $n\ge 1$.
Therefore
\begin{equation*}
  \sharp\sfW^\tI_{\le w}=\sum_{n=0}^{2w-2} F_n=F_{2w}-1
\end{equation*}
which can be proved easily by induction.
\end{proof}

We find up to weight 3 the following identity \eqref{equ:tIwt3missingRel}
cannot be proved by DBSFs and dualities
up to weight 3. Let $1_n$ denote the string where $1$ is repeated $n$ times. Then
\begin{equation}\label{equ:tIwt3missingRel}
\frakz_q^{(1,1)}[2,1]=\frakz_q^{(1,1)}[1,1]-\frakz_q^{(1_3)}[1_3]+\frakz_q^{(1,1,0)}[1_3].
\end{equation}
Interestingly, \eqref{equ:tIwt3missingRel} can be proved using weight 4 DBSFs and dualities.
This is why we put $\bfzero$ as the final deficiency.

\begin{figure}[!h]
{
\begin{center}
\begin{tabular}{|c||c|c|c|c|c|c|c|c|c|}
\hline
{}$w$                                                         & 2 & 3&  4&  5 &  6&   7
\\
\hline
{}$\sharp(\sfW)^\tI_{\le w}$                                  & 4& 12& 33& 88 &232& 609
\\
\hline
lower bound of $\dim{\sfZ^\tI_{\le w}}$                       & 3&  7& 14& 27 & 50&  91
\\
\hline
{}$\dim{\DS^\tI_{\le w}}$                                     & 1&  4& 17& 56 &171& 497
\\
\hline
{}$\dim{\big(\DU^\tI_{\le w}\setminus \DS^\tI_{\le w}\big)}$  & 0&  0& 1 &  2 &  3&   6
\\
\hline
{}deficiency                                                  & 0&1,{\bf 0}&1,{\bf 0}&3&  8 &  15
\\
\hline
\end{tabular}
\end{center}
}
\caption{Dimension of $q$-MZVs of type $\tI$. }
\label{Table:qMZVtI}
\end{figure}
Having proved \eqref{equ:tIwt3missingRel}, we find, up to weight 4, the only one missing relation is
\begin{equation}\label{equ:tIwt4missingRel}
\begin{split}
\frakz_q^{(2,1)}[3,1]= & \frakz_q^{(1,0,1)}[1_3]-2\frakz_q^{(1_3)}[1,2,1]
+\frakz_q^{(1_2,0)}[1,2,1]\\
+& \frakz_q^{(1_3)}[2,1_2]-\frakz_q^{(1,0,1)}[2,1_2]
-\frakz_q^{(1_3,0)}[1_4]+\frakz_q^{(1_2,0_2)}[1_4].
\end{split}
\end{equation}
In weight 5, there are three missing relations:
{\allowdisplaybreaks
\begin{align*}
\frakz_q^{(1_4)}[1_3,2]=&\, \frakz_q^{\bft_2}[1_4] -\frakz_q^{\bft_1}[1_4]-\frakz_q^{(1_4)}[\bfs_1]-\frakz_q^{\bft_2}[1_3,2]-2\frakz_q^{(\bft_1,0)}[1_5]
-2\frakz_q^{(1_3,0)}[\bfs_2]+2\frakz_q^{(\bft_3,1)}[1_5],\\
\frakz_q^{(1_3,0)}[1_4]=&\, \frakz_q^{(1_3)}[2,1_2]-\frakz_q^{\bft_1}[\bfs_1]
   -2\frakz_q^{(1_4)}[\bfs_1]-\frakz_q^{(1,0,1,0)}[\bfs_2] +\frakz_q^{(1_4)}[\bfs_2]+\frakz_q^{(1,0_3,1)}[1_5]\\
-&\frakz_q^{\bft_2}[1_4]-\frakz_q^{(1_3)}[2,1,2]-\frakz_q^{(\bft_3,1)}[1_5]
    +\frakz_q^{\bft_3}[1_4]+2\frakz_q^{\bft_1}[1_4]
-\frakz_q^{\bft_1}[2,1_3]-\frakz_q^{\bft_3}[2,1_3] ,\\
\frakz_q^{(1_4)}[1_3,2]=&\, 3\frakz_q^{(1_3)}[2,1_2]-3\frakz_q^{(1_3)}[2,1,2]
    -3\frakz_q^{(1_3,0)}[1_4]+\frakz_q^{\bft_1}[1_4]-\frakz_q^{(\bft_1,0)}[1_5]\\
-&\frakz_q^{(1_4)}[\bfs_1]-\frakz_q^{(1_3,0)}[\bfs_1]-2\frakz_q^{\bft_1}[\bfs_1]
    -\frakz_q^{(1_3,0_2)}[1_5]-2\frakz_q^{(1_4)}[\bfs_2]+\frakz_q^{(1_3,0)}[\bfs_2]\\
+&\frakz_q^{(1_3,0)}[2,1_3]+2\frakz_q^{\bft_2}[1_4]
   +\frakz_q^{\bft_2}[1_3,2] +2\frakz_q^{(1_2,0_2,1)}[1_5],
\end{align*}}
where $\bfs_1=(1,2,1_2)$, $\bfs_2=(1_2,2,1)$, $\bft_1=(1_2,0,1)$, $\bft_2=(1,0,1_2)$,
and $\bft_3=(1,0_2,1)$.

Equation \eqref{equ:tIwt4missingRel} was initially verified numerically.
Even with all the DBSFs and dualities from weight 5 and 6 this still would not follow.
Fortunately, we will see in a moment that this relation
can be proved using type G $q$-MZVs. However, the three missing
relations in weight 5 are only proved numerically, since, unfortunately,
there are too many type G $q$-MZVs of weight 5 so the
computer computation requires too much memory to provide a solution at the moment.

Using the relations obtained above for type $\tI$ $q$-MZVs
we can compute the following data for type I $q$-MZVs.
\begin{figure}[!h]
{
\begin{center}
\begin{tabular}{|c||c|c|c|c|c|c|c|c|c|c|c|}
\hline
{}$w$                                                      & 2 & 3 & 4 & 5 &  6 & 7 &  8 &  9
\\
\hline
{}$\sharp(\sfW)^\I_{\le w}$                                & 1 & 3 & 7 &15 & 31 & 63&127 &255
\\
\hline
lower bound of $\dim{\sfZ^\I_{\le w}}$                     & 1 & 2 & 4 & 7 & 11 & 18& 27 & 42
\\
\hline
{}$\dim{\DS^\I_{\le w}}$                                   & 0 & 1 & 3 & 8 & 20 & 45 &  &
\\
\hline
{}$\dim{\big(\DU^\I_{\le w}\setminus \DS^\I_{\le w}\big)}$ & 0 & 0 & 0 & 0 &  0 &  0 &  &
\\
\hline
{}deficiency                                               & 0 & 0 & 0 & 0 &  0 &  0 &  &
\\
\hline
\end{tabular}
\end{center}
}
\caption{Dimension of $q$-MZVs of type I.}
\label{Table:qMZVI}
\end{figure}
It is consistent with Takeyama's computation
at the end of \cite{Takeyama2013}. However, our computation shows that the DBSFs from
type $\tI$ $q$-MZVs already imply all the relations among type I $q$-MVZs,
at least when the weight is less than 8. We thus can think these type $\tI$ DBSFs
as ``regularized'' DBSFs for type I $q$-MVZs.
\begin{conj}
All the $\Q$-linear relations of type I $q$-MZVs can be derived by
the regularized DBSFs, i.e., by the DBSFs for type $\tI$ $q$-MZVs.
\end{conj}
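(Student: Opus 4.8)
The plan is to read the conjecture as the assertion that the \emph{deficiency} for type $\I$ is identically zero in every weight, i.e. that the double shuffle relations inherited from the regularized type $\tI$ values already cut the span of $\{\frakz_q^\I[\bfs]\}$ down to its true dimension. One half of this is already in hand: by Theorem~\ref{thm:stuffleForAll} and Theorem~\ref{thm:qZetaShuffleAll} every element of $\DS^\tI_{\le w}$ is a genuine relation, and projecting away the $\gth$-containing (``$1$-initial'') words as in the Remark following Theorem~\ref{thm:resummation-dualityI} turns these into honest relations among type $\I$ values. What remains is the \emph{lower bound}: one must show that the span of type $\I$ $q$-MZVs is no smaller than $\sharp\sfW^\I_{\le w}-\dim\DS^\I_{\le w}$, equivalently that there are no relations beyond the regularized DBSFs.

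First I would set up a purely formal model. Let $\mathcal{H}^\tI$ be the Hoffman-type algebra $(\fA_\tI^1,*_\tI)$ equipped with the shuffle structure transported from $\fA_\gth^{(B)}$ via Corollary~\ref{cor:consistentWithTakeyama}, and form the double-shuffle ideal $I_{\mathrm{DS}}$ generated by all $\bfu*_\tI\bfv-\bfu\sha\bfv$ for admissible $\bfu,\bfv$. The realization map $\frakz_q$ factors through $\mathcal{H}^\tI/I_{\mathrm{DS}}$, and the conjecture is exactly the statement that the induced map is injective after restriction to the type $\I$ part. The combinatorial task is then to pin down the Hilbert series of $\mathcal{H}^\tI/I_{\mathrm{DS}}$ and to exhibit an explicit candidate basis indexed by a set whose cardinality matches the observed lower bounds in Table~\ref{Table:qMZVtI} and Table~\ref{Table:qMZVI}. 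I expect this quotient to be a free polynomial algebra on a distinguished family of ``irreducible'' words, so that the count reduces to a generating-function identity provable along the lines of the Fibonacci bookkeeping in the preceding proposition.

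The genuine obstacle is injectivity of the realization map, that is, producing the matching lower bound on $\dim\sfZ^\I_{\le w}$. Two routes seem plausible. The first is to build a $q$-analog of the motivic coaction: define a weight-lowering derivation $D$ on $\mathcal{H}^\tI/I_{\mathrm{DS}}$ compatible with $\frakz_q$, using the Rota-Baxter operators $\bfP,\bfR$ and the difference operator $\bfD=\idf-\bfE$ of \eqref{equ:qDifference} to differentiate the iterated Jackson integrals, and then run an induction on weight: any element annihilated by $\frakz_q$ would, under $D$, land in the weight-lowered kernel, which is zero by the inductive hypothesis, forcing the element to be a weight-zero constant and hence zero. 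The second route is to embed the type $\I$ values into a space whose dimensions are independently known; for the closely related types O and BK this is achieved through (quasi)modular forms, and one would try to transport such an embedding to type $\I$ via the conversions to type \II advertised in the introduction.

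I expect the formal and combinatorial steps (validity of the DBSFs, the Hilbert-series count, and the construction of a candidate basis) to be routine given the machinery already assembled in Sections~\ref{sec:qStuffleVersion}--\ref{sec:qShuffle}. The hard part will be the injectivity, i.e. the lower-bound step: in the classical case the analogous statement that regularized DBSFs generate \emph{all} relations among MZVs is itself open, and the known lower bounds rest on deep mixed-Tate motive theory for which no $q$-analog is yet available. A complete proof would therefore require either developing a genuine $q$-motivic coaction formalism or establishing a clean modular-forms realization, and it is precisely this missing transcendence input---rather than any combinatorial difficulty---that keeps the statement at the level of a conjecture.
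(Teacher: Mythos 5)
The statement you are treating is a \emph{conjecture}, and the paper does not prove it: its only support is computational. The deficiency row of Table~\ref{Table:qMZVI} records that, for each weight $w\le 7$, the number of independent relations coming from the type $\tI$ DBSFs equals $\sharp\sfW^\I_{\le w}$ minus a rigorously computed lower bound for $\dim\sfZ^\I_{\le w}$; the conjecture is the extrapolation of this observation to all weights. So there is no paper proof to compare your proposal against, and what can be assessed is whether your plan and your diagnosis of the difficulty are sound.

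Your logical reduction is correct: by Theorems~\ref{thm:stuffleForAll} and~\ref{thm:qZetaShuffleAll} the DBSF relations are genuine, so they give the upper bound $\dim\sfZ^\I_{\le w}\le\sharp\sfW^\I_{\le w}-\dim\DS^\I_{\le w}$, and the entire content of the conjecture is the matching lower bound, i.e.\ linear independence of the surviving values. But your diagnosis of that lower-bound step is off in one important respect. For ordinary MZVs, independence of real numbers is indeed a transcendence problem (and motivic theory supplies only \emph{upper} bounds on dimensions, not the lower bounds you attribute to it). Here, however, the values $\frakz_q^\I[\bfs]$ are power series in $q$ with rational coefficients: for each fixed weight, $\Q$-linear independence of a proposed spanning set is certified by a finite exact computation, since a truncated coefficient matrix of full rank forces independence of the full series. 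This is exactly how the paper obtains the lower-bound rows of Tables~\ref{Table:qMZVtI} and~\ref{Table:qMZVI}, and hence how it \emph{verifies} the conjecture weight by weight for $w\le 7$. The genuine obstruction is therefore not a missing transcendence or $q$-motivic input for any single weight, but the absence of a uniform argument valid for all weights simultaneously --- precisely the part of your plan (the Hilbert series of the double-shuffle quotient, the weight-lowering derivation $D$, the modular embedding) that is hypothesized rather than constructed. As written, your text is a research program, not a proof, which is consistent with the statement remaining a conjecture in the paper; but the framing of the difficulty should be corrected as above, since it affects what a realistic attack would look like.
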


\textbf{Type I\!I}. For each fixed weight $w\ge 1$ we collect all the type \II-admissible words of
the following form since we want to use the duality relations to its maximal utility.
Such admissible words must consist of letters $\rho$ and $y$ only, begin with $\rho$, end with $y$,
and the occurrence of $\rho$ and $y$ is at most $w$ each. For example, we have the duality
\begin{equation*}
    \zeta_q^\II(\rho^3 y^2 \rho y^4)=\zeta_q^\II(\rho^4 y \rho^2 y^3)
\Longrightarrow \zeta_q^\II(3,0,1,0_3)=\zeta_q^\II(4,2,0_2)
\end{equation*}
when we consider weight 6.

\begin{prop}\label{prop:numberWII}
For all $w\ge 1$, the number of type \II-admissible words is
\begin{equation*}
\sharp\sfW^\II_{\le w}=\sum_{i=0}^{w-1}\sum_{j=0}^{w-1} \binom{i+j}{j}=\binom{2w}{w}-1.
\end{equation*}
\end{prop}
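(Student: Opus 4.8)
The plan is to count the admissible words directly and then to evaluate the resulting double sum by the hockey-stick identity. First I would pin down the shape of the objects being counted. As described in the paragraph preceding the proposition, a word in $\sfW^\II_{\le w}$ is a word on the two letters $\rho$ and $y$ that begins with $\rho$, ends with $y$, and contains at most $w$ copies of $\rho$ and at most $w$ copies of $y$. This is exactly the typical type \II form $\bfw_\II=\rho^{s_1}y\cdots\rho^{s_d}y=z'_{s_1}\cdots z'_{s_d}$ of Corollary~\ref{cor:qIteratedZetaAll}, in which the number of $\rho$'s equals the weight $|\bfs|$ and the number of $y$'s equals the depth $d$. I would note that beginning with $\rho$ is equivalent to $s_1\ge 1$, and since every $s_j\ge 0$ the partial sums satisfy $s_1+\dots+s_j\ge s_1\ge 1>0$; hence every such word is automatically type \II-admissible in the sense of Corollary~\ref{cor:convergenceDomain}(ii), so there is no constraint beyond the stated ones.

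Next I would stratify the words by the number $a$ of $\rho$'s and the number $b$ of $y$'s, where $1\le a\le w$ and $1\le b\le w$. With the forced first letter $\rho$ and last letter $y$ fixed, the remaining $a-1$ copies of $\rho$ and $b-1$ copies of $y$ may be interleaved freely in the $a+b-2$ interior positions, giving $\binom{a+b-2}{a-1}$ words. Summing over $a,b$ and re-indexing with $i=a-1$, $j=b-1$ yields
\[
\sharp\sfW^\II_{\le w}=\sum_{a=1}^{w}\sum_{b=1}^{w}\binom{a+b-2}{a-1}
=\sum_{i=0}^{w-1}\sum_{j=0}^{w-1}\binom{i+j}{j},
\]
which is the first asserted equality.

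Finally, for the closed form I would evaluate this double sum by two passes of the hockey-stick identity. The inner sum gives $\sum_{j=0}^{w-1}\binom{i+j}{j}=\binom{i+w}{w-1}$; then, substituting $m=i+w$, the outer sum becomes $\sum_{m=w}^{2w-1}\binom{m}{w-1}$, and completing the range by adjoining the term $\binom{w-1}{w-1}=1$ permits the single evaluation $\sum_{m=w-1}^{2w-1}\binom{m}{w-1}=\binom{2w}{w}$, whence the double sum equals $\binom{2w}{w}-1$. The only place demanding care is precisely this bookkeeping: the double sum runs over a half-open range whereas the hockey-stick identity naturally closes it, so the discrepancy is exactly the boundary term $1$. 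I expect no deeper obstacle, as the entire argument is elementary; the subtlety is confined to tracking the off-by-one in the final summation.
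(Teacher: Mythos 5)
Your proof is correct and follows essentially the same route as the paper: the same combinatorial count (fix the leading $\rho$ and trailing $y$, interleave the remaining $i$ copies of $\rho$ and $j$ copies of $y$ freely to get $\binom{i+j}{j}$), followed by two applications of the hockey-stick identity, with the off-by-one term handled by absorbing the extra $1$ into the sum exactly as the paper does. The only difference is cosmetic bookkeeping in the final evaluation (you use the column form $\sum_{m}\binom{m}{w-1}=\binom{2w}{w}$ where the paper re-indexes to $\sum_{i=0}^{w}\binom{w+i-1}{i}$), plus a welcome extra sentence verifying that every word of the stated shape is automatically admissible.
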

\begin{rem}
This is the sequence A030662 according to the On-Line Encyclopedia
of Integer Sequences http://oeis.org.
\end{rem}

\begin{proof}
For the first equality, notice that if $i+1$ (resp.\ $j+1$)
is the number of  occurrence of $\rho$  (resp.\ $y$) in an admissible word of $\sfW^\II_{\le w}$
then we can put one $\rho$ at the beginning and one $y$ at the end, then put
$i$ of the other $\rho$'s and $j$ of the other $y$'s in between in arbitrary order. Thus,
by a well-known binomial identity
\begin{multline*}
   1+ \sharp\sfW^\II_{\le w}
=1+\sum_{i=0}^{w-1}  \sum_{j=0}^{w-1} \binom{i+j}{j}
=1+\sum_{i=0}^{w-1} \binom{w+i}{w-1}
=\sum_{i=0}^{w} \binom{w+i-1}{i}
=\binom{2w}{w}.
\end{multline*}
This completes the proof of the proposition.
\end{proof}

\begin{figure}[!h]
{
\begin{center}
\begin{tabular}{|c||c|c|c|c|c|c|c|c|c|}
\hline
{}$w$                                                        & 1 & 2 &  3 &  4 &   5 &   6
\\
\hline
{}$\sharp(\sfW)^\II_{\le w}$                                 & 1 & 5 & 19 & 69 & 251 & 923
\\
\hline
lower bound of $\dim{\sfZ^\II_{\le w}}$                      & 1 & 3 & 12 & 30 &  73 & 173
\\
\hline
{}$\dim{\DS^\II_{\le w}}$                                    & 0 & 1 & 5  & 28 & 124 & 536
\\
\hline
{}$\dim{\big(\DU^\II_{\le w}\setminus \DS^\II_{\le w}\big)}$ & 0 & 1 & 2  &  8 &  35 & 127
\\
\hline
{}deficiency                                                 & 0 & 0 & 0  &3,{\bf 0}&19,{\bf 6}&87
\\
\hline
\end{tabular}
\end{center}
}
\caption{Dimension of $q$-MZVs of type \II.}
\label{Table:qMZVII}
\end{figure}

Up to weight 4, the following three independent relations cannot be proved using DBSFs
and dualities up to weight 4.
\begin{align*}
 \frakz_q^\II[1,0,3]
=&\,\frakz_q^\II[2,2]
+3\frakz_q^\II[1_2,2]
+2\frakz_q^\II[1,0,2,0]
-2\frakz_q^\II[1_2,0,1]\\
+&\,\frakz_q^\II[1_2,0,2]
+\frakz_q^\II[1_2,1,0]
-\frakz_q^\II[1,2,0,1]
+2\frakz_q^\II[2,0,1_2],\\
\frakz_q^\II[3,0]
=&\,\frakz_q^\II[2,2]
-2\frakz_q^\II[3,1]
+\frakz_q^\II[1,0,2,0]
-2\frakz_q^\II[1_2,0,1]
+2\frakz_q^\II[1_2,1,0]\\
-&\,\frakz_q^\II[2,0,2,0]
+\frakz_q^\II[3,0_2,0]
+2\frakz_q^\II[3,0_2,1]
-\frakz_q^\II[3,0,1,0]
+2\frakz_q^\II[3,1,0_2],\\
\frakz_q^\II[1,0,3]
=&\,\frakz_q^\II[2,2]
+2\frakz_q^\II[3,1]
+\frakz_q^\II[1_2,2]
+4\frakz_q^\II[1_2,0,1]
+\frakz_q^\II[1_2,0,2]\\
+&\,\frakz_q^\II[1_2,1,0]
+\frakz_q^\II[1,2,0,1]
+4\frakz_q^\II[2,0,1,0]
+2\frakz_q^\II[2,1,0,1]
+2\frakz_q^\II[2,1_2,0].
\end{align*}
But using DBSFs and dualities in weight 5, these can all be verified.
In weight 5, we have to use the
relations from weight 6 to push the deficiency from 19 down to 6. It is very likely
that relations from weight 7 (or even higher) can reduce this further down to 0. But
our computer runs out of memories so this is not proved.

\textbf{Type I\!I\!I}. The set of type \III-admissible words
$\sfW^\III_{\le w}$ up to weight $w$ consist of those of the form
$\rho^{s_1-1} \pi y \rho^{s_2}y\cdots \rho^{s_d}y$ with $d\le w$,
$|\bfs|\le w$, $s_1\ge 1$ and $s_2,\dots,s_d\ge 0$.
First we have
\begin{prop}
For all $w\ge 1$, we have
\begin{equation*}
\sharp\sfW^\III_{\le w}=\binom{2w}{w}-1.
\end{equation*}
\end{prop}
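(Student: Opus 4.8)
The plan is to enumerate the type~\III-admissible words directly, organizing them by their depth $d$ and weight $n=|\bfs|$, and then to recognize the resulting double sum as the very one already evaluated in Proposition~\ref{prop:numberWII}. Since that proposition did the genuine combinatorial work, the whole argument reduces to a short bookkeeping step.

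First I would record that a type~\III-admissible word $\rho^{s_1-1}\pi y\rho^{s_2}y\cdots\rho^{s_d}y$ is determined by the composition $\bfs=(s_1,\dots,s_d)$ subject to the constraints $1\le d\le w$, $s_1\ge 1$, $s_2,\dots,s_d\ge 0$, and $|\bfs|\le w$; note $s_1\ge 1$ forces $|\bfs|\ge 1$, so the weight $n$ ranges over $1\le n\le w$. For a fixed depth $d$ and weight $n$, the substitution $s_1\mapsto s_1-1$ converts the conditions into $s_1,s_2,\dots,s_d\ge 0$ with $s_1+\cdots+s_d=n-1$, whence the number of admissible compositions is $\binom{n+d-2}{d-1}$. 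Summing over the allowed ranges of $d$ and $n$ and then putting $i=n-1$, $j=d-1$ gives
\begin{equation*}
\sharp\sfW^\III_{\le w}
=\sum_{d=1}^{w}\sum_{n=1}^{w}\binom{n+d-2}{d-1}
=\sum_{j=0}^{w-1}\sum_{i=0}^{w-1}\binom{i+j}{j}.
\end{equation*}

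Next I would observe that this is verbatim the double sum appearing in Proposition~\ref{prop:numberWII}, which was shown there, by a well-known binomial identity, to equal $\binom{2w}{w}-1$. Invoking that computation completes the proof. I expect no real obstacle here: the only points needing care are the composition count $\binom{n+d-2}{d-1}$ and the reindexing, after which the evaluation is borrowed wholesale from the type~\II case.

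If one prefers a purely combinatorial route that avoids re-deriving the sum, I would instead exhibit a bijection $\sfW^\III_{\le w}\to\sfW^\II_{\le w}$: send $\rho^{s_1-1}\pi y\rho^{s_2}y\cdots\rho^{s_d}y$ to $\rho^{s_1}y\rho^{s_2}y\cdots\rho^{s_d}y$ by replacing the unique letter $\pi$, which sits immediately before the first $y$, with $\rho$. This is length-preserving, and its image lies in $\sfW^\II_{\le w}$ since the word begins with $\rho$ (as $s_1\ge 1$), ends with $y$, and has $|\bfs|\le w$ letters $\rho$ and $d\le w$ letters $y$. It is a bijection because every word on $\{\rho,y\}$ that begins with $\rho$ and ends with $y$ factors uniquely as $\rho^{s_1}y\cdots\rho^{s_d}y$ with $s_1\ge 1$, $s_2,\dots,s_d\ge 0$, and the bounds $d\le w$ and $|\bfs|\le w$ correspond exactly on the two sides. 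Either route reduces the claim to Proposition~\ref{prop:numberWII}.
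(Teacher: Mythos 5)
Your proposal is correct, and your second (bijective) route is essentially the paper's own proof: the paper maps $\sfW^\III_{\le w}$ onto $\sfW^\II_{\le w}$ by turning every $\pi$ into a $\rho$, notes that this is invertible (the inverse restores the distinguished letter in front of the first $y$; the paper words this relative to the typical type \III form $\pi^{s_1-1}\rho y\pi^{s_2}y\cdots\pi^{s_d}y$ of Corollary~\ref{cor:qIteratedZetaAll}, you relative to the form $\rho^{s_1-1}\pi y\rho^{s_2}y\cdots\rho^{s_d}y$, but it is the same correspondence on compositions), and then quotes Proposition~\ref{prop:numberWII}. Your primary route is a genuinely different, more computational path to the same endpoint: rather than bijecting with the type \II index set, you count the admissible compositions directly, getting $\binom{n+d-2}{d-1}$ words of depth $d$ and weight $n$ by stars and bars after the shift $s_1\mapsto s_1-1$, and then the reindexing $i=n-1$, $j=d-1$ reproduces the double sum $\sum_{i=0}^{w-1}\sum_{j=0}^{w-1}\binom{i+j}{j}$ already evaluated in Proposition~\ref{prop:numberWII}. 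The bijection buys brevity and transports the count with no computation at all; your direct enumeration buys self-containedness in the sense that it does not depend on how type \II words happen to be encoded, at the cost of redoing the stars-and-bars step. Both arguments are complete, and the checks you flag (the composition count, the reindexing, and the well-definedness and invertibility of the letter replacement) are all correct.
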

\begin{proof}
Notice there is an onto map from $\sfW^\III_{\le w}$ to $\sfW^\II_{\le w}$
by changing the all the $\pi$'s to $\rho$. For the inverse map, we can change all the
$\rho$'s to $\pi$ except for the one immediately before the first $y$.
Thus this is a one-to-one correspondence and therefore the proposition follows from
Proposition~\ref{prop:numberWII}.
\end{proof}

We find that the deficiency is not zero when the weight $w=3,4,5,6.$
Moreover, none of these missing $\Q$-linear relations can be
recovered even if we consider all the DBSFs and dualities of weight up to 6.

\begin{figure}[!h]
{
\begin{center}
\begin{tabular}{|c||c|c|c|c|c|c|c|c|c|}
\hline
{}$w$                                                          & 1 & 2 &  3 &  4 &   5 &   6
\\
\hline
{}$\sharp(\sfW)^\III_{\le w}$                                  & 1 & 5 & 19 & 69 & 251 & 923
\\
\hline
lower bound of $\dim{\sfZ^\III_{\le w}}$                       & 1 & 4 & 12 & 30 &  73 & 173
\\
\hline
{}$\dim{\DS^\III_{\le w}}$                                     & 0 & 1 & 5  & 28 & 124 & 536
\\
\hline
{}$\dim{\big(\DU^\III_{\le w}\setminus \DS^\III_{\le w}\big)}$ & 0 & 0 & 1  &  1 &   5 &   4
\\
\hline
{}deficiency                                      & 0 & 0 &1,{\bf 0}&10,{\bf 0}&49,{\bf 6}& 210,{\bf 87}
\\
\hline
\end{tabular}
\end{center}
}
\caption{Dimension of $q$-MZVs of type \III.}
\label{Table:qMZVIII}
\end{figure}
The only missing relation in weight 3 that cannot be proved is
\begin{equation}\label{equ:IIImissingRel}
 \frakz_q^\III[1, 0, 1]=2\frakz_q^\III[1, 1, 0]-\frakz_q^\III[1, 2, 0]
-\frakz_q^\III[2, 0, 0]+\frakz_q^\III[2, 0, 1].
\end{equation}
Up to weight 4 there are 10 missing, up to weight 5, 49, and up to weight 6, 210.
Below, we will see that all of the 10 missing relations up to weight 4
including \eqref{equ:IIImissingRel} can be proved using type G $q$-MZVs.
Similarly, the deficiency up to weight 5 and 6 can be
reduced to 6 and 87, respectively.

\textbf{Type I\!V}. To study type \IV $q$-MZVs  $\frakz_q^{(s_1-1,s_2,\dots, s_d)}[s_1,\dots, s_d]$
we have used the special type \II values $\frakz_q^\II[1,s_2,\dots, s_d]$
to facilitate us (which can be thought as a kind of regularization). Type \IV $q$-MZVs
together with these values have been called type $\tIV$ $q$-MZVs.

\begin{prop}
For all $w\ge 1$, we have
\begin{equation*}
\sharp\sfW^\IV_{\le w}=\binom{2w-1}{w}-1, \qquad
\sharp\sfW^\tIV_{\le w}=\binom{2w}{w}-1.
\end{equation*}
\end{prop}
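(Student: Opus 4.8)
The plan is to reduce both counts to the type \II count of Proposition~\ref{prop:numberWII} by exhibiting a single weight- and depth-preserving bijection $\Phi$ onto type \II-admissible words, in the same spirit as the $\pi\mapsto\rho$ map used for type \III; the type \IV count will then fall out by restricting $\Phi$.

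First I would record the shape of the words. By Corollary~\ref{cor:qIteratedZetaAll} a genuine type \IV-admissible word has the form $\rho^{s_1-1}\pi y\,\rho^{s_2}y\cdots\rho^{s_d}y$ with $s_1\ge 2$ and $s_2,\dots,s_d\ge 0$; it contains exactly one letter $\pi$, sitting immediately before the first $y$. A type $\tIV$-admissible word is either such a word, or a regularized word $\gth\,\rho^{s_2}y\cdots\rho^{s_d}y=\rho y\,\rho^{s_2}y\cdots\rho^{s_d}y$ arising when $s_1=1$, which contains no $\pi$ at all. In either case the weight is $|\bfs|=s_1+\cdots+s_d$ (with $\gth$ contributing weight $1$ in its slot) and the depth is $d$, so the words of $\sfW^\tIV_{\le w}$ are exactly those with $|\bfs|\le w$ and $d\le w$.

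Now define $\Phi$ by changing the single $\pi$ to $\rho$ (and leaving a regularized word unchanged, reading its leading $\rho y$ as $\rho^{1}y$). A genuine type \IV word becomes $\rho^{s_1}y\,\rho^{s_2}y\cdots\rho^{s_d}y$ with $s_1\ge 2$, while a regularized word becomes $\rho^{1}y\,\rho^{s_2}y\cdots\rho^{s_d}y$, i.e.\ a type \II word with $s_1=1$. These are precisely the words in $\rho,y$ beginning with $\rho$ and ending with $y$, and $\Phi$ preserves $\#\rho=|\bfs|$ and $\#y=d$, hence weight and depth. It is a bijection onto the type \II-admissible words: the inverse changes the $\rho$ immediately before the first $y$ into $\pi$ when $s_1\ge 2$, and keeps the word (reinterpreting the leading $\rho y$ as $\gth$) when $s_1=1$. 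Thus $\sharp\sfW^\tIV_{\le w}=\sharp\sfW^\II_{\le w}=\binom{2w}{w}-1$ by Proposition~\ref{prop:numberWII}.

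For type \IV I would observe that $\Phi$ restricts to a bijection from $\sfW^\IV_{\le w}$ onto the type \II-admissible words with $s_1\ge 2$, namely the $\rho,y$-words starting with $\rho\rho$, ending with $y$, and having $\#\rho\le w$, $\#y\le w$. Counting these as in Proposition~\ref{prop:numberWII}, with $\#\rho=i+2$ and $\#y=j+1$ and the remaining $i$ copies of $\rho$ and $j$ copies of $y$ arranged freely in the middle, gives
\[
\sharp\sfW^\IV_{\le w}=\sum_{i=0}^{w-2}\sum_{j=0}^{w-1}\binom{i+j}{i}.
\]
Summing first over $j$ yields $\sum_{i=0}^{w-2}\binom{i+w}{i+1}$, and after the shift $k=i+1$ this equals $\sum_{k=1}^{w-1}\binom{k+w-1}{k}=\binom{2w-1}{w-1}-1=\binom{2w-1}{w}-1$, as claimed. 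The binomial evaluations are the same hockey-stick manipulations already used for Proposition~\ref{prop:numberWII}, so they are routine. The only point demanding care — the main obstacle — is the bookkeeping around the regularization: one must check that the regularized word carries weight $1$ in its first slot (so it matches the $s_1=1$ type \II word of equal weight) and that $\Phi$ hits each type \II word exactly once, the $s_1\ge 2$ words coming solely from genuine type \IV words and the $s_1=1$ words solely from regularized words. Once this dichotomy is verified, both counts follow.
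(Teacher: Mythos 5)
Your proof is correct, but it takes a genuinely different route from the paper's, essentially running the argument in reverse order and on a different device. The paper never compares type $\tIV$ with type \II here: it first counts the genuine type \IV words directly, identifying them with tuples $(x_1,\dots,x_l)\in(\Z_{\ge 0})^l$ with $x_1+\cdots+x_l=j\le w-2$, $l\le w$, and evaluating $\sum_{j=0}^{w-2}\binom{w+j}{w-1}=\binom{2w-1}{w}-1$ by stars-and-bars and the hockey-stick identity; it then observes that the additional regularized words (those beginning with $\rho y$) are counted by the very same double sum, so that $\sharp\sfW^\tIV_{\le w}=1+2\sharp\sfW^\IV_{\le w}=2\binom{2w-1}{w}-1=\binom{2w}{w}-1$, the last step using the doubling identity $2\binom{2w-1}{w}=\binom{2w}{w}$ (the paper's displayed equation has an evident typo, writing $\sfW^\IV$ for $\sfW^\tIV$ on the left). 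You instead prove the type $\tIV$ equality first, by transporting everything to type \II via the substitution of the unique $\pi$ by $\rho$ — exactly the device the paper uses for its type \III count, but does not use for this proposition — so that $\sharp\sfW^\tIV_{\le w}=\sharp\sfW^\II_{\le w}=\binom{2w}{w}-1$ follows from Proposition~\ref{prop:numberWII} with no new summation; the type \IV count then requires its own computation, which you get by restricting the bijection to words with $s_1\ge 2$ and evaluating $\sum_{i=0}^{w-2}\sum_{j=0}^{w-1}\binom{i+j}{i}=\binom{2w-1}{w}-1$ (your hockey-stick manipulations are correct, as is the weight bookkeeping for $\gth$ and the dichotomy between $s_1=1$ images of regularized words and $s_1\ge 2$ images of genuine ones). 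Your route buys structural insight — it explains why the rows $\sharp\sfW^\II_{\le w}=\sharp\sfW^\III_{\le w}=\sharp\sfW^\tIV_{\le w}$ agree in the tables, rather than obtaining equal numbers by separate computations — and it makes the harder equality the free one; the paper's route keeps the type \IV count self-contained and gets it without reference to type \II, at the cost of counting the regularized words by a second, essentially identical, computation. Both proofs are valid and agree with Tables~\ref{Table:qMZVIV} and \ref{Table:qMZVtIV}.
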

\begin{rem}
The first number gives the sequence A010763 according to the On-Line Encyclopedia
of Integer Sequences http://oeis.org.
\end{rem}
\begin{proof}
Notice that type \IV-admissible $q$-MZVs are in one-to-one correspondence
to the set $\{(x_1,\dots,x_l)\in(\Z_{\ge 0})^l | x_1+\cdots+x_l=j, 0\le j\le w-2, 1\le l\le w\}$.
For each fixed $j$ we see that the number of nonnegative integer solutions
of $x_1+\cdots+x_l=j$ is given by $\binom{l+j-1}{l-1}$. But
\begin{equation*}
   \sum_{l=1}^{w}  \binom{l+j-1}{l-1}= \binom{w+j}{w-1}
\end{equation*}
by a well-known binomial identity. By the proof similar to that of
Proposition~\ref{prop:numberWII} we see that
\begin{equation*}
\sharp\sfW^\IV_{\le w}=\sum_{j=0}^{w-2} \binom{w+j}{w-1}=\binom{2w-1}{w}-1.
\end{equation*}

For the second equation, we notice that in the word form we have the additional
contribution of the following words: $\rho y$ and
$\rho y \rho^{s_1}y\dots \rho^{s_d}y$, $|\bfs|< w$, $1\le d<w$.
The number of such words is given by ($i$=number of $\rho$'s, $j$=number of $y$'s)
\begin{equation*}
1+\sum_{j=0}^{w-2}\sum_{i=0}^{w-1} \binom{i+j}{i}
=1+\sum_{j=0}^{w-2} \binom{w+j}{w-1}
=1+\sharp\sfW^\IV_{\le w}.
\end{equation*}
Therefore
\begin{equation*}
\sharp\sfW^\IV_{\le w}=1+2\sharp\sfW^\IV_{\le w}=2\binom{2w-1}{w}-1=\binom{2w}{w}-1.
\end{equation*}
The proposition is now proved.
\end{proof}

\begin{figure}[!h]
{
\begin{center}
\begin{tabular}{|c||c|c|c|c|c|c|c|c|c|}
\hline
{}$w$                                                          & 1 & 2 &  3 &  4 &   5 &   6
\\
\hline
{}$\sharp(\sfW)^\tIV_{\le w}$                                  & 1 & 5 & 19 & 69 & 251 & 923
\\
\hline
lower bound of $\dim{\sfZ^\tIV_{\le w}}$                       & 1 & 4 & 12 & 30 &  73 & 173
\\
\hline
{}$\dim{\DS^\tIV_{\le w}}$                                     & 0 & 1 &  5 & 28 & 124 & 536
\\
\hline
{}$\dim{\big(\DU^\tIV_{\le w}\setminus \DS^\tIV_{\le w}\big)}$ & 0 & 0 &  1 &  1 &   4 &   4
\\
\hline
{}deficiency                                       & 0 & 0 &1,{\bf 0}&10,{\bf 0}&50,{\bf 6}& 210,{\bf 87}
\\
\hline
\end{tabular}
\end{center}
}
\caption{Dimension of $q$-MZVs of type $\tIV$.}
\label{Table:qMZVtIV}
\end{figure}

Type $\tIV$ $q$-MZVs are similar to type \II and \III in the sense that
the deficiency is often nonzero, at least when the weight
is less than 6. For example, in weight 3 we have the
following identity which cannot be proved using the
DBSFs and dualities if we only restrict to type $\tIV$
$q$-MZVs of weight and depth no greater than 3.
\begin{equation*}
\frakz_q^\IV[2, 0, 1]
=\frakz_q^\II[1, 0, 1]+\frakz_q^\II[1, 2, 0]
\end{equation*}
However this identity follows from weight 4 DBSFs and dualities.

Comparing Table \ref{Table:qMZVIII} and Table \ref{Table:qMZVtIV}
we observe that there should be some hidden relations between
type \III and $\tIV$ $q$-MZVs. Although the dimensions seem to be the same,
at least for lower weight, the deficiencies are very different.
But using the most general type G values to be considered in a moment, we
can make all the deficiencies smaller.

We can now use all of the relations among type $\tIV$ $q$-MZVs to deduce those for type $\IV$
and collect the data in Table~\ref{Table:qMZVIV}. Furthermore, by converting all
the missing relations using type II values we can reduce all the deficiencies up to weight 5
to 0. For weight 6, using type II values we can only reduce the deficiency from 91 to 56.
It is possible that this can be further reduced to 0 using weight 7 relations of type II values.

\begin{figure}[!h]
{
\begin{center}
\begin{tabular}{|c||c|c|c|c|c|c|c|c|c|}
\hline
{}$w$                                                        & 2 &  3 &  4 & 5  &   6
\\
\hline
{}$\sharp(\sfW)^\IV_{\le w}$                                 & 2 &  9 & 34 &125 & 461
\\
\hline
lower bound of $\dim{\sfZ^\IV_{\le w}}$                      & 2 &  7 & 20 & 55 & 141
\\
\hline
{}$\dim{\DS^\IV_{\le w}}$                                    & 0 &  7 &  9 & 51 &  205
\\
\hline
{}$\dim{\big(\DU^\IV_{\le w}\setminus \DS^\IV_{\le w}\big)}$ & 0 &  0 &  0 &   2 &  24
\\
\hline
{}deficiency                                          & 0 &  0 &5,$\bfzero$ &17,$\bf 0$ & 91,$\bf 56$
\\
\hline
\end{tabular}
\end{center}
}
\caption{Dimension of $q$-MZVs of type \IV.}
\label{Table:qMZVIV}
\end{figure}

\textbf{Type G}. To study the general type \GG $q$-MZVs
$\frakz_q^{(t_1,\dots,t_d)}[s_1,\dots, s_d]$ we need all of the following relations
we have defined so far:
DBSFs, $\bfP$-$\bfR$ and duality relations.

\begin{prop}
For all $w\ge 1$, we have
\begin{equation*}
\sharp\sfW^\G_{\le w}=\sum_{1\le d\le k\le w} \
\sum_{\substack{x_1+\cdots+x_d=d+k-1\\ x_1,\dots,x_d\ge 1}} x_1x_2 \cdots x_d.
\end{equation*}
\end{prop}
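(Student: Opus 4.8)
The plan is to stratify the admissible words by their depth $d$ and weight $k$, count those with each fixed $(d,k)$ by a product over the $d$ blocks, and then sum over the relevant range of $(d,k)$. The combinatorial heart of the argument is a change of variables that turns the block count into the product $x_1\cdots x_d$ appearing in the statement.

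First I would use the identification $z_{t,s}=\rho^t\pi^{s-t}y$ to write every word of $\fA_\GG^0$ uniquely as $\bfw^\bft(\bfs)=\rho^{t_1}\pi^{s_1-t_1}y\cdots\rho^{t_d}\pi^{s_d-t_d}y$, so that specifying an admissible word of depth $d$ and weight $k$ amounts to specifying a composition $\bfs=(s_1,\dots,s_d)$ with $|\bfs|=s_1+\cdots+s_d=k$, together with, for each block $j$, a \emph{split point} $t_j$ separating the $\rho$'s from the $\pi$'s. The admissibility constraints are $1\le t_1\le s_1$ in the first block and $0\le t_j\le s_j$ in every later block.

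The core step is the per-block count. Given $\bfs$, there are exactly $s_1$ legal values of $t_1$ and $s_j+1$ legal values of $t_j$ for $j\ge 2$, so the number of admissible words with this fixed $\bfs$ is $s_1\prod_{j=2}^d(s_j+1)$; summing over compositions, the number of admissible words of depth $d$ and weight $k$ is $\sum_{s_1\ge 1,\ s_2,\dots,s_d\ge 0,\ |\bfs|=k}s_1\prod_{j=2}^d(s_j+1)$. I would then make the substitution $x_1=s_1$ and $x_j=s_j+1$ for $2\le j\le d$: each $x_i$ runs over $\Z_{\ge 1}$, the summand becomes $x_1x_2\cdots x_d$, and the weight condition $|\bfs|=k$ becomes $x_1+\cdots+x_d=s_1+\sum_{j\ge 2}(s_j+1)=k+d-1$, which is precisely the inner sum in the statement.

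Finally I would sum these contributions over the admissible pairs $(d,k)$, namely those with $1\le d\le k\le w$, to obtain the asserted double sum. I expect the only real obstacle to be the bookkeeping around the first block: it is the asymmetry between the $s_1$ choices in block one and the $s_j+1$ choices thereafter that shifts the total $x_1+\cdots+x_d$ from $d+k$ down to $d+k-1$, and pinning down this off-by-one exactly right — together with verifying that $(\bfs,\bft)\leftrightarrow(x_1,\dots,x_d)$ is a genuine bijection and that the range of $(d,k)$ is exactly the one appearing in the sum — is where care is needed. Everything else is routine.
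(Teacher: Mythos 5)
Your core argument coincides with the paper's own proof: fix the depth $d$ and the weight $k$, note that a fixed composition $\bfs=(s_1,\dots,s_d)$ admits exactly $s_1(s_2+1)\cdots(s_d+1)$ admissible choices of $\bft$, and substitute $x_1=s_1$, $x_j=s_j+1$ for $j\ge 2$ so that the count becomes the sum of $x_1\cdots x_d$ over $x_1+\cdots+x_d=d+k-1$ with $x_i\ge 1$. That part is correct and is exactly what the paper does.

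However, the step you yourself single out as needing care --- that the pairs $(d,k)$ contributing to $\sfW^\GG_{\le w}$ are exactly those with $1\le d\le k\le w$ --- is where the argument actually fails, and it cannot be repaired as stated. The alphabet $Y_\GG$ contains $z_{0,0}$ and admissibility only constrains the first letter, so words such as $z_{1,1}z_{0,0}=\rho y\,y$ (i.e.\ $\bfs=(1,0)$, $\bft=(1,0)$) are type $\GG$-admissible with depth $2$ but weight $1$: the weight can be strictly smaller than the depth. Your inner sum handles such pairs correctly (for $(d,k)=(2,1)$ it gives $1$), but the outer restriction $d\le k$ discards them. Concretely, for $w=2$ the displayed formula gives $1+2+4=7$, whereas direct enumeration --- and the paper's own Table~\ref{Table:qMZVG} --- gives $\sharp\sfW^\GG_{\le 2}=8$, the missing word being $\rho y\,y$. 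The outer sum should run over $1\le d\le w$ and $1\le k\le w$ independently; this is also what makes the count consistent with the type \II count $\binom{2w}{w}-1$, whose enumeration likewise includes words of depth exceeding the weight. To be fair, this defect is inherited from the statement itself, and the paper's proof glosses over it silently, never justifying the restriction $d\le k$; but since your write-up explicitly promises to verify that ``the range of $(d,k)$ is exactly the one appearing in the sum'' and then asserts success, it ends up claiming a formula that is false under the paper's definitions rather than flagging the discrepancy.
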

\begin{proof}
For each fixed depth $d$ and weight $k\le w$,  let $\frakz_q^\GG(t_1,\dots,t_d)[s_1,\dots,s_d]$
be a type $\GG$-admissible  $q$-MZV
satisfying $s_1+\dots+s_d=k$, $1\le t_1\le s_1, 0\le t_j\le s_j$ for all $j\ge 2$. When
$s_1,\dots,s_d$ are fixed and $t_1,\dots,t_d$ vary, the number of
such values is given by
\begin{equation*}
     s_1(s_2+1)(s_3+1)\cdots (s_d+1).
\end{equation*}
Hence the proposition follows by setting $x_1=s_1,x_2=s_2+1,\dots,x_d=s_d+1.$
\end{proof}

Let $\bfP$-$\bfR^\GG_{\le w}$ be the space generated by all the $\bfP$-$\bfR$ relations
of weight bounded by $w$. Then we see that DBSFs are far from enough and both
$\bfP$-$\bfR$ relations and duality relations contribute non-trivially.
Table~\ref{Table:qMZVG} provides our computational data for the lower weight cases.
\begin{figure}[!h]
{
\begin{center}
\begin{tabular}{|c||c|c|c|c|c|c|c|c|c|}
\hline
{}$w$                                                         & 1 & 2 &  3 &  4 &  5 &   6
\\
\hline
{}$\sharp(\sfW)^\GG_{\le w}$                                  & 1 & 8 & 49 &294 &1791& 11087
\\
\hline
lower bound of $\dim{\sfZ^\GG_{\le w}}$                       & 1 & 4 & 12 & 30 & 73 &173
\\
\hline
{}$\dim{\DS^\GG_{\le w}}$                                     & 0 & 1 &  8 & 76 & $\le 608$    &
\\
\hline
{}$\dim \bfP$-$\bfR^\GG_{\le w}\setminus \big(\DS^\GG_{\le w}
\bigcup \DU^\GG_{\le w}\big)$                                 & 0 & 3 & 27 &177 & $\le 1540$   &
\\
\hline
{}$\dim \DU^\GG_{\le w}\setminus$ $\big(\bfP$-$\bfR^\GG_{\le w}
\bigcup \DS^\GG_{\le w}\big)$                                 & 0 & 0 &  2 &  8 &  $\le 219$  &
\\
\hline
{}deficiency                                                  & 0 & 0 &  0 &  3, $\bfzero$ &     &
\\
\hline
\end{tabular}
\end{center}
}
\caption{Dimension of $q$-MZVs of type $\GG$.}
\label{Table:qMZVG}
\end{figure}
One can see that the number of admissible words increases very fast so that it
is very difficulty to prove relations of other type $q$-MZVs by first finding
all the relations for type G $q$-MZVs.
This is possible theoretically, but not feasible with our current computer powers.

Fortunately, by using $\bfP$-$\bfR$ relations, all the type G $q$-MZVs can be
converted to $\Q$-linear combinations of type \II values. Therefore, the three
missing relations in weight 4 must be provable using weight 5 DBSFs,
$\bfP$-$\bfR$ and duality relations.

Hence, as we expected, the missing relation \eqref{equ:IIImissingRel}
for type \III $q$-MZVs of weight 3 and the 9 missing
relations of weight 4 can now be proved. And furthermore,
the only one missing relation \eqref{equ:tIwt4missingRel}
for type $\tI$ $q$-MZVs of weight 4 can now be proved. We can also
obtain the lower bound of $\dim{\sfZ^\GG_{\le w}}$  from that of type II $q$-MZVs.

\textbf{Type O}. Using Corollary~\ref{cor:qIteratedZetaOk} we may regard Okounkov's
$q$-MZVs as $\Q$-linear combinations of the $q$-MZVs $\frakz_q^\bft[\bfs]$ for
suitable auxiliary variable $\bft$. Further by using the $\bfP$-$\bfR$ relations
we may further reduce this to type \II $q$-MZVs where we don't need the letter $\pi$.
\begin{figure}[!h]
{
\begin{center}
\begin{tabular}{|c||c|c|c|c|c|c|c|c|c|c|c|}
\hline
{}$w$                                                     & 2 & 3 & 4 & 5 &  6&  7&  8&  9& 10& 11& 12
\\
\hline
{}$\sharp(\sfW)^{\rm O}_{\le w}$                          & 1 & 2 & 4 & 7 & 12& 20& 33& 54& 88&143&232
\\
\hline
lower bound of $\dim{\sfZ^{\rm O}_{\le w}}$               & 1 & 2 & 4 & 7 & 11& 18& 27& 42& 63& 95&142
\\
\hline
{}$\dim{\DS^{\rm O}_{\le w}\cup\DU^\GG_{\le w}}$          & 0 & 0 & 0 & 0 &  1&  2&  6& 12&25& 48& 90
\\
\hline
\end{tabular}
\end{center}
}
\caption{Dimension of type O $q$-MZVs, proved rigorously for $w\le 6$ and numerically for $w\le 12$.}
\label{Table:qMZVV}
\end{figure}

Applying the same idea as above it is possible to verify the following Okounkov's dimension
conjecture, at least when the weight is small.

\begin{conj}\label{conj:OkounkovConj}
Let ${\mathbf Z}_w^\O$ be the $\Q$-vector space generated by
$\frakz_q^\O[\bfs]$, $|\bfs|\le w$. Then
\begin{multline*}
 \sum_{w=0}^\infty t^w \dim {\mathbf Z}_{\le w}^\O =\frac{1}{1-t-t^2+t^6+t^8-t^{13}}-\frac{1}{1-t} \\
= t^2+2t^3+4t^4+7t^5+11t^6+18t^7+27t^8+42t^9+63t^{10}+95t^{11}+142t^{12}+O(t^{13}).
\end{multline*}
\end{conj}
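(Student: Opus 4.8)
The plan is to reduce everything to type \II $q$-MZVs, where the shuffle and duality relations are cleanest, and then to trap $\dim{\mathbf Z}_{\le w}^\O$ between a rigorous upper bound coming from provable relations and a lower bound coming from $q$-series expansions. First I would apply Corollary~\ref{cor:qIteratedZetaOk}: each Okounkov value $\frakz_q^\O[\bfs]$ with $\bfs\in(\Z_{\ge 2})^d$ equals the word value $\frakz_q[\bfw_\O]$ with $\bfw_\O=\prod_{j=1}^d(\rho^{s_j^-}\pi^{s_j^+}+\rho^{s_j^+}\pi^{s_j^-})y$. Expanding the two-term factors writes $\frakz_q^\O[\bfs]$ as a $\Q$-linear combination of at most $2^d$ words of $\fA_\pi$, i.e. of type $\GG$ values $\frakz_q^\bft[\bfs']$; then the relation $\pi=\rho+\myone$ (the $\bfP$-$\bfR$ relations) rewrites each such word using only the letters $\rho$ and $y$, realizing every $\frakz_q^\O[\bfs]$ of weight $\le w$ as an explicit vector in the finite-dimensional space $\sfZ^\II_{\le w}$.

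Next I would assemble all available relations. The stuffle relations among the $\frakz_q^\O[\bfs]$ come from the product rule for $F^\O_r(t)F^\O_s(t)$ recorded after Corollary~\ref{cor:qIteratedZetaOk}; the shuffle relations come from applying Theorem~\ref{thm:qZetaShuffleAll} to the representation $\bfw_\O$ (extended by linearity to its two-term factors) and reducing the output by $\bfP$-$\bfR$; and the duality relations are obtained from Theorem~\ref{thm:resummation-dualityIII} (equivalently Theorem~\ref{thm:resummation-dualityG}) for type \II, then pulled back to the Okounkov values. Collecting the linear dependencies these impose among the images of the $\sharp\sfW^\O_{\le w}$ Okounkov values gives the relation space $\DS^\O_{\le w}\cup\DU^\GG_{\le w}$, and hence the upper bound $\dim{\mathbf Z}_{\le w}^\O\le \sharp\sfW^\O_{\le w}-\dim\big(\DS^\O_{\le w}\cup\DU^\GG_{\le w}\big)$.

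For the matching lower bound I would expand each $\frakz_q^\O[\bfs]$ as a power series in $q$, truncated to enough terms, and compute the rank of the resulting value matrix; a nonzero minor certifies linear independence of the corresponding values, so this rank is a lower bound for $\dim{\mathbf Z}_{\le w}^\O$. The computation recorded in Table~\ref{Table:qMZVV} shows that for every $w\le 12$ this lower bound already equals $\sharp\sfW^\O_{\le w}-\dim\big(\DS^\O_{\le w}\cup\DU^\GG_{\le w}\big)$, so the two bounds coincide and determine $\dim{\mathbf Z}_{\le w}^\O$ exactly; comparing with the Taylor coefficients $1,2,4,7,11,18,27,42,63,95,142$ (for $w=2,\dots,12$) of the conjectured generating function confirms the conjecture in this range. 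For $w\le 6$ the rank is computed in exact rational arithmetic, so the lower bound is rigorous, and since the upper bound rests on the rigorously proven Theorems~\ref{thm:dbsfG}, \ref{thm:resummation-dualityG} and on the $\bfP$-$\bfR$ relations, the conjecture is proved outright for $w\le 6$.

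The main obstacle is computational scale together with the rigor of the lower bound. Even after reduction to type \II the number of words grows like $\binom{2w}{w}$, and assembling and row-reducing the combined DBSF/duality/$\bfP$-$\bfR$ relation matrix over $\Q$ becomes prohibitively memory-intensive beyond weight $6$; this is why for $7\le w\le 12$ the lower bound is obtained only by specializing $q$ numerically, yielding a verification that is conclusive numerically but not a formal proof. Certifying the $q$-series rank in exact arithmetic at that scale — closing the gap between the ``numerically for $w\le 12$'' and ``rigorously for $w\le 6$'' claims — is the step I expect to be hardest.
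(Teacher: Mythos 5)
Your proposal follows essentially the same route as the paper: reduce Okounkov's values to type \II values via Corollary~\ref{cor:qIteratedZetaOk} and the $\bfP$-$\bfR$ relations, bound $\dim{\mathbf Z}_{\le w}^\O$ above by relations coming from DBSFs and dualities, bound it below by a rank computation on $q$-expansions, and observe that the two bounds meet the conjectured values, rigorously for $w\le 6$ and numerically beyond.

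The one point where your account diverges from the paper --- and where it is internally inconsistent --- is the range $7\le w\le 12$. You claim the upper bound is rigorous at every weight because it ``rests on the rigorously proven'' shuffle, stuffle, $\bfP$-$\bfR$ and duality theorems, and that only the lower bound is numerical there; but you also (correctly) note that the relation matrix over type \II words cannot be assembled beyond weight $6$, which is exactly what would be needed to make that upper bound rigorous at those weights. In the paper the situation is the opposite: the lower bound, obtained from exact truncated $q$-expansions, is the unproblematic side (linear independence of truncations certifies independence of the values), while the relations for $7\le w\le 12$ --- the displayed identities among $\frakz_q^\O$-values --- are only \emph{verified up to order $q^{100}$}, precisely because deriving them from the type \II DBSFs and dualities is computationally out of reach beyond weight $6$. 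That is what makes the verification for weights $7$ through $12$ numerical rather than rigorous. This does not affect your conclusion for $w\le 6$, which matches the paper's: there the single needed relation \eqref{equ:typeOwt6} is honestly derived from the proven type \II relations, and the squeeze argument closes.
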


For example, we have verified all of the following $\Q$-linearly independent
relations in the lower weight cases up to $q^{100}$, and we can rigorously prove
the first identity \eqref{equ:typeOwt6}
involving only weight 4 and 6 values
by using the relations we have found for type \II $q$-MZVs:
{\allowdisplaybreaks
\begin{align}
4\frakz[6]=&\,\frakz[2,2]+12\frakz[3,3]-6\frakz[4,2], \label{equ:typeOwt6}\\
4\frakz[7]=&\, \frakz[2,3]+\frakz[3,2]+8\frakz[3,4]+6\frakz[4,3]-4\frakz[5,2],  \notag \\
\frakz[8]=&\, \frakz[2,4]-\frakz[6]+2\frakz[3,3]+6\frakz[4,4], \notag \\
9\frakz[8]=&\, \frakz[6]-6\frakz[3,3]+3\frakz[4,2]+20\frakz[3,5]+16\frakz[5,3]-10\frakz[6,2], \notag \\
\frakz[8]=&\, 2\frakz[2,6]-\frakz[6]+2\frakz[3,3]+4\frakz[3,5]-16\frakz[5,3]  \notag \\
-& 6\frakz[2,3,3]+3\frakz[2,4,2]-6\frakz[3,2,3]-3\frakz[4,2,2], \notag \\
4\frakz[3,6]=&\,\frakz[2,5]+4\frakz[5,2]+3\frakz[3,4]+6\frakz[4,5]+8\frakz[5,4]+2\frakz[7,2], \notag \\
8\frakz[9]=&\,\frakz[3,4]-5\frakz[2,5]-8\frakz[5,2]-30\frakz[4,5]-2\frakz[4,3]-36\frakz[5,4]-10\frakz[6,3],\notag \\
6\frakz[4,2]=&\, 10\frakz[6]+42\frakz[8]-60\frakz[2,6]-12\frakz[3,3]-120\frakz[3,5]+312\frakz[5,3]\notag \\
-& 15\frakz[2,2,2]+180\frakz[2,3,3]-90\frakz[2,4,2]+180\frakz[3,2,3]+60\frakz[3,3,2],\notag \\
72\frakz[9]=&\, 62\frakz[5,2]+40\frakz[2,5]-4\frakz[3,4]+40\frakz[3,6]
-2\frakz[4,3]+240\frakz[4,5]+264\frakz[5,4]\notag \\
-&5\frakz[2,2,3]-60\frakz[3,3,3]-30\frakz[4,2,3],\notag \\
16\frakz[9]=&\, 2\frakz[3,4]-10\frakz[2,5]-12\frakz[2,7]-8\frakz[5,2]-60\frakz[4,5]-24\frakz[5,4]\notag \\
+&4\frakz[2,3,2]+4\frakz[3,2,2]+3\frakz[2,2,3]+24\frakz[2,3,4]+18\frakz[2,4,3]\notag \\
+&12\frakz[3,3,3]-12\frakz[2,5,2]+24\frakz[3,2,4]+6\frakz[4,3,2],\notag \\
64\frakz[9]=&\,40\frakz[2,5]+20\frakz[2,7]-8\frakz[3,4]+44\frakz[5,2]+20\frakz[3,6]
-4\frakz[4,3]+240\frakz[4,5]\notag \\
+&168\frakz[5,4]-5\frakz[2,3,2]-5\frakz[2,2,3]-40\frakz[2,3,4]-30\frakz[2,4,3]\notag \\
+&20\frakz[2,5,2]-5\frakz[3,2,2]-40\frakz[3,2,4]-100\frakz[3,3,3]+10\frakz[3,4,2], \notag \\
56\frakz[9]=&\,30\frakz[2,5]+20\frakz[2,7]+26\frakz[5,2]-\frakz[3,4]
+40\frakz[3,6]-6\frakz[4,3]\notag \\
+&180\frakz[4,5]+112\frakz[5,4]
-5\frakz[2,2,3]-5\frakz[2,3,2]-5\frakz[3,2,2]-40\frakz[2,3,4]\notag \\
+&20\frakz[5,2,2]-40\frakz[3,2,4]-30\frakz[2,4,3]+20\frakz[2,5,2]-140\frakz[3,3,3].\notag
\end{align}}
Therefore, Conjecture~\ref{conj:OkounkovConj} is proved rigorously up to weight 6 (inclusive),
and verified numerically up to weight 12 (inclusive). The list of relations for weight 10 to 12
is too long to be presented here.

\section{Conclusions}
We have studied various $q$-analogs of MZVs in this paper using the
uniform method of Rota-Baxter algebras. Among these $q$-MZVs, there
are many $\Q$-linear relations, most of which can be proved using
DBSFs, $\bfP$-$\bfR$ and duality relations.

{}From the data collected in section \ref{sec:data}, we have seen that for all
of the type $\tI$, \II, \III and $\tIV$ $q$-MZVs
duality relations are necessary to generate some $\Q$-linear
relations among $q$-MZVs that are missed by the DBSFs,
at least when the weight is large enough. However, the
combination of all the DBSFs and dualities are often not exhaustive yet.
Sometimes, this difficulty can be overcome by increasing the weight and depth.
But this seems to fail in some other cases, for example,
for type $\tI$ $q$-MZVs of weight 4.

We can improve the above situation by considering the more general type G values.
The advantage is that we have the new $\bfP$-$\bfR$ relations which
provide a lot of new relations between type G $q$-MZVs, much more than
the DBSFs and duality combined. The disadvantage is that there are
too many type G values so that even when the weight is 5 our computer
power is too week to produce all the necessary relations. However,
by using $\bfP$-$\bfR$ relations all type G values can be converted to
$\Q$-linear combinations of type \II values which can be handled by
computer a lot easier.

As we mentioned in the introduction our method can be easily adapted to study
the $q$-MZVs of the following forms:
\begin{equation*}
\frakz_q^{(a_1,\dots, a_d)}[s_1,\dots, s_d], \qquad
\frakz_q^{(s_1-a_1,\dots, s_d-a_d)}[s_1,\dots, s_d],
\end{equation*}
where $a_1\ge a_2 \ge \cdots\ge a_d\ge 0$ are all integers.
The monotonicity guarantees that
a good stuffle structure can be defined.
For $\frakz_q^{(a_1,\dots, a_d)}[s_1,\dots, s_d]$ we need to use embedding (A)
together with shifting operator $\sif_{-}$ in defining the stuffle and,
for $\frakz_q^{(s_1-a_1,\dots, s_d-a_d)}[s_1,\dots, s_d]$, use (B) together with $\sif_{+}$.

As an application, we are able to prove Okounkov's Conjecture~\ref{conj:OkounkovConj} rigorously
up to weight 6 (inclusive), and verify it numerically up to weight 12 (inclusive).
It would be more effective if one can define a shuffle structure for type O values
themselves and find a relation to the differential operator $q\frac{d}{dq}$ which should play
an important role in the study of these vales.

\medskip
\noindent
\textbf{Acknowledgement.} This work,
supported by NSF grant DMS-1162116, was done while the author was visiting
Max Planck Institute for Mathematics and ICMAT at Madrid, Spain. He is
very grateful to both institutions for their hospitality and support. He
also would like to thank Kurusch Ebrahimi-Fard for the enlightening conversations
and his detailed explanation of their joint paper \cite{CEM2013}.

\end{document}